\documentclass[a4paper,10pt]{article}

\usepackage{latexsym}
\usepackage{mathrsfs}
\usepackage{amsfonts,amsmath,amssymb}
\usepackage{indentfirst}
\usepackage{verbatim}
\usepackage[pdftex]{graphicx}
\usepackage{epstopdf}
\usepackage{stmaryrd}
\usepackage{multirow}
\usepackage{diagbox}
\usepackage{subcaption}
\usepackage{float}
\usepackage{booktabs}
\usepackage{tabularx} % For adjustable table width
\usepackage{algorithm}
\usepackage{algpseudocode}
\usepackage{bm}

\usepackage[utf8]{inputenc}
\usepackage[T1]{fontenc}

\newcommand{\bx}{\mbox{\boldmath{$x$}}}

\newcommand{\bb}{\mbox{\boldmath{$b$}}}

\newcommand{\bn}{\mbox{\boldmath{$n$}}}
\newcommand{\bu}{\mbox{\boldmath{$u$}}}

\newcommand{\bv}{\mbox{\boldmath{$v$}}}

\newcommand{\bw}{\mbox{\boldmath{$w$}}}

\newcommand{\bC}{\mbox{\boldmath{$C$}}}
\newcommand{\bX}{\mbox{\boldmath{$X$}}}
\newcommand{\bI}{\mbox{\boldmath{$I$}}}
\newcommand{\bB}{\mbox{\boldmath{$B$}}}
\newcommand{\bpsi}{\mbox{\boldmath{$\psi$}}}
\newcommand{\bc}{\mbox{\boldmath{$c$}}}
\newcommand{\bP}{\mbox{\boldmath{$P$}}}

\newcommand{\on}{{\rm on}}

\newcommand{\bxi}{\mbox{\boldmath{$\xi$}}}

\newcommand{\R}{\mathbb{R}}

\newcommand{\dd}{\mathrm{d}}

\newcommand{\diam}{\diamond}

\newcommand{\Gop}{\mathcal{G}}
\newcommand{\Bop}{\mathcal{B}}

\newtheorem{theorem}{Theorem}[section]
\newtheorem{lemma}[theorem]{Lemma}
\newtheorem{assumption}[theorem]{Assumption}
\newtheorem{corollary}[theorem]{Corollary}

\newtheorem{example}[theorem]{Example}

\newtheorem{remark}[theorem]{Remark}
\numberwithin{equation}{section}

\newenvironment{proof}[1][Proof]{\textbf{#1.} }
{\ \rule{0.75em}{0.75em}\smallskip}

\usepackage[pdftex,unicode,colorlinks,linkcolor=blue]{hyperref}

\begin{document}

\begin{center}
\Large\bf Randomized Neural Networks for Partial Differential Equation\\ on Static and Evolving Surfaces
\end{center}

\begin{center}
Jingbo Sun\footnote{School of Mathematics and Statistics, Xi'an Jiaotong University, Xi'an, Shaanxi 710049, P.R. China. E-mail: {\tt jingbosun@xjtu.edu.cn}.},
\quad 
Fei Wang\footnote{School of Mathematics and Statistics \& State Key Laboratory of Multiphase Flow in Power Engineering, Xi’an Jiaotong University, Xi’an, Shaanxi 710049, China. The work of this author was partially supported by the National Natural Science Foundation of China (Grant No. 92470115). Email: {\tt feiwang.xjtu@xjtu.edu.cn}.}
\end{center}

\medskip

\begin{quote}
\textbf{Abstract.} 
Surface partial differential equations arise in numerous scientific and engineering applications.
Their numerical solution on static and evolving surfaces remains challenging due to geometric complexity and, for evolving geometries,
the need for repeated mesh updates and geometry or solution transfer.
While neural-network-based methods offer mesh-free discretizations, approaches based on nonconvex training can be costly and may fail to
deliver high accuracy in practice.
In this work, we develop a randomized neural network (RaNN) method for solving PDEs on both static and evolving surfaces: the hidden-layer parameters are randomly generated and kept fixed,
and the output-layer coefficients are determined efficiently by solving a least-squares problem.
For static surfaces, we present formulations for parametrized surfaces, implicit level-set surfaces, and point-cloud
geometries, and provide a corresponding theoretical analysis for the parametrization-based formulation with interface compatibility.
For evolving surfaces with topology preserved over time, we introduce a RaNN-based strategy that learns the surface evolution through a
flow-map representation and then solves the surface PDE on a space--time collocation set, avoiding remeshing.
Extensive numerical experiments demonstrate broad applicability and favorable accuracy--efficiency performance on representative
benchmarks.
\end{quote}

{\bf Keywords.} randomized neural networks; surface PDEs; static and evolving surfaces; space-time approach
\medskip

\section{Introduction}

Partial differential equations (PDEs) are fundamental tools for modeling a wide range of phenomena in physics, biology, and engineering. Many important PDEs are defined on curved manifolds and appear in diverse application areas such as image processing (\cite{Niang2012image,Biddle2013image,Lozes2014imge}), biomechanics (\cite{Stoop2015biomechanics,Mietke2019biomechanics}), and phase-field models (\cite{Gugenberger2008phase}). Explicitly incorporating surface geometry leads to more realistic and accurate simulations of these complex systems.

Despite their importance, solving PDEs on static and evolving surfaces remains highly challenging due to the geometric complexity of the underlying domains. Traditional methods, such as surface finite element methods (FEMs, \cite{Dziuk2007fem,Dziuk2007femp,Dziuk2008fempi,Elliott2012alefem,Dziuk2013fem}), address surface PDEs by discretizing the manifold using triangulations or other mesh structures. In contrast, mesh-free approaches like radial basis function methods (\cite{Fuselier2013rfm,Alvarez2018rbf,Shankar2020rbf,Wendland2020rbf}) offer high accuracy and geometric flexibility without requiring mesh generation. Another family of approaches, known as embedding methods, recasts PDEs posed on surfaces as equivalent volumetric PDEs in $\mathbb{R}^3$, such as the level set method (\cite{Bertalmio2001im}) and the closest point method (\cite{Ruuth2008close,Petras2016evolving,Petras2019close}).

Recent advances in machine learning have spurred the development of neural network-based methods for solving PDEs. Examples include Physics-Informed Neural Networks (PINNs) (\cite{Raissi2019PINN}), which solve PDEs in their strong form, and the Deep Ritz Method (\cite{Ee2018deepRitz}), which adopts a variational approach. Numerous other neural network-based frameworks have also emerged (\cite{Sirignano2018DGM,Liao2019deepRitzboundary,Jagtap2020pinn,Jagtap2020adappinn,Jagtap2020xpinn,Mao2020hsf,Zang2020adversarialnns,Wang2024tnn}). These methods exploit the universal approximation capabilities of neural networks (\cite{Cybenko1989appro,Hornik1989apro,Barron1993apro,Chen1995appro,Mhaskar1995appro,Lu2017appro,Ma2022appro}), including convergence rates independent of the dimensionality (\cite{Barron1993apro}). Several recent works have extended PINNs to surface PDEs (\cite{Fang2019sur_pinn,Fang2021sur_pinn,Tang2022sur_pinn}). However, PINNs typically reformulate even linear PDEs as nonconvex optimization problems, may suffer from optimization difficulties, which can limit accuracy and efficiency. A recent approach by Hu et al. (\cite{Hu2024pinn}) enhances both accuracy and efficiency by combining PINN and embedding techniques with shallow networks and Levenberg–Marquardt optimization (\cite{Marquardt1963lm}).

In this work, we pursue a different strategy by employing Randomized Neural Networks (RaNN) to solve PDEs on both static and evolving surfaces. RaNN has shown strong potential in volumetric PDEs and offers advantages in training simplicity and computational speed (\cite{Dwivedi2020elm,Dong2021locELM,Chen2022rfm,Shang2022DeepPetrov,Sun2024lrnndg,Li2025int,AGRANN2025}). The proposed method is mesh-free and applicable to a broad class of surfaces, including parametrizable surfaces, level-set-defined surfaces, and surfaces known only through scattered point data. RaNN was first introduced in~\cite{Pao1992RNN3,Pao1994RNN4,Igelnik1995RNN1,Igelnik1999RNN2}. While the architecture resembles that of fully connected neural networks, RaNN differs fundamentally in training: the hidden layer weights are randomly sampled from a prescribed distribution and fixed, while the output layer is trained analytically. This eliminates the need for backpropagation, offering substantial computational advantages. It has been shown that RaNN retains strong approximation and generalization properties under suitable conditions~\cite{Igelnik1995RNN1,Huang2006ELMtheorandapp,Liu2014ELMfeasible,Neufeld2023rnn}.

RaNN-based methods have already demonstrated success across a wide range of volumetric PDEs, including both strong-form (\cite{Dwivedi2020elm,Dong2021locELM,Chen2022rfm,AGRANN2025}) and weak-form (\cite{Shang2022DeepPetrov,Sun2024lrnndg,Dang2024hdg}) formulations. Applications include diffusive-viscous wave equations (\cite{Sun2024lrnndgdvwe}), KdV and Burgers equations (\cite{Sun2025lrnndgkdv}), linear elasticity and fluid dynamics (\cite{Shang2024DeepPetrov}), as well as more complex problems such as high-dimensional (\cite{Wang2023lelm}), interface (\cite{Chi2024rfm,Li2025int}), obstacle (\cite{Wang2024ob}), multiscale (\cite{Chen2024Micro-Macro,Linghu2024Homs-Rnn}), and inverse problems (\cite{Dong2023locELM}). Motivated by these developments, we extend RaNN methods to surface PDEs, focusing on both static and evolving geometries. We demonstrate the effectiveness and efficiency of the proposed approach through a series of numerical experiments.

The remainder of this paper is organized as follows. Section~\ref{sec:RaNN} reviews the architecture and training of Randomized Neural Networks. Section~\ref{sec:static_surface} presents RaNN formulations for both stationary and time-dependent PDEs on static surfaces, and provides a corresponding theoretical analysis for the parametrization-based formulation with interface compatibility. Section~\ref{sec:evolving} develops the method for evolving surfaces with topological invariance. Section~\ref{sec:ex} presents numerical results. Finally, Section~\ref{sec:conclusion} concludes the paper and outlines future directions.

%%%%%%%%%%%%%%%%%%%%%%%%%%%%%

\section{Randomized Neural Networks}
\label{sec:RaNN}

In this section, we introduce a special type of neural network, the \emph{Randomized Neural Network} (RaNN), which can be readily adapted to a wide range of PDEs while maintaining both high efficiency and accuracy.

\begin{figure}[H]  
	\centering   	
	\includegraphics[width=0.6\textwidth]{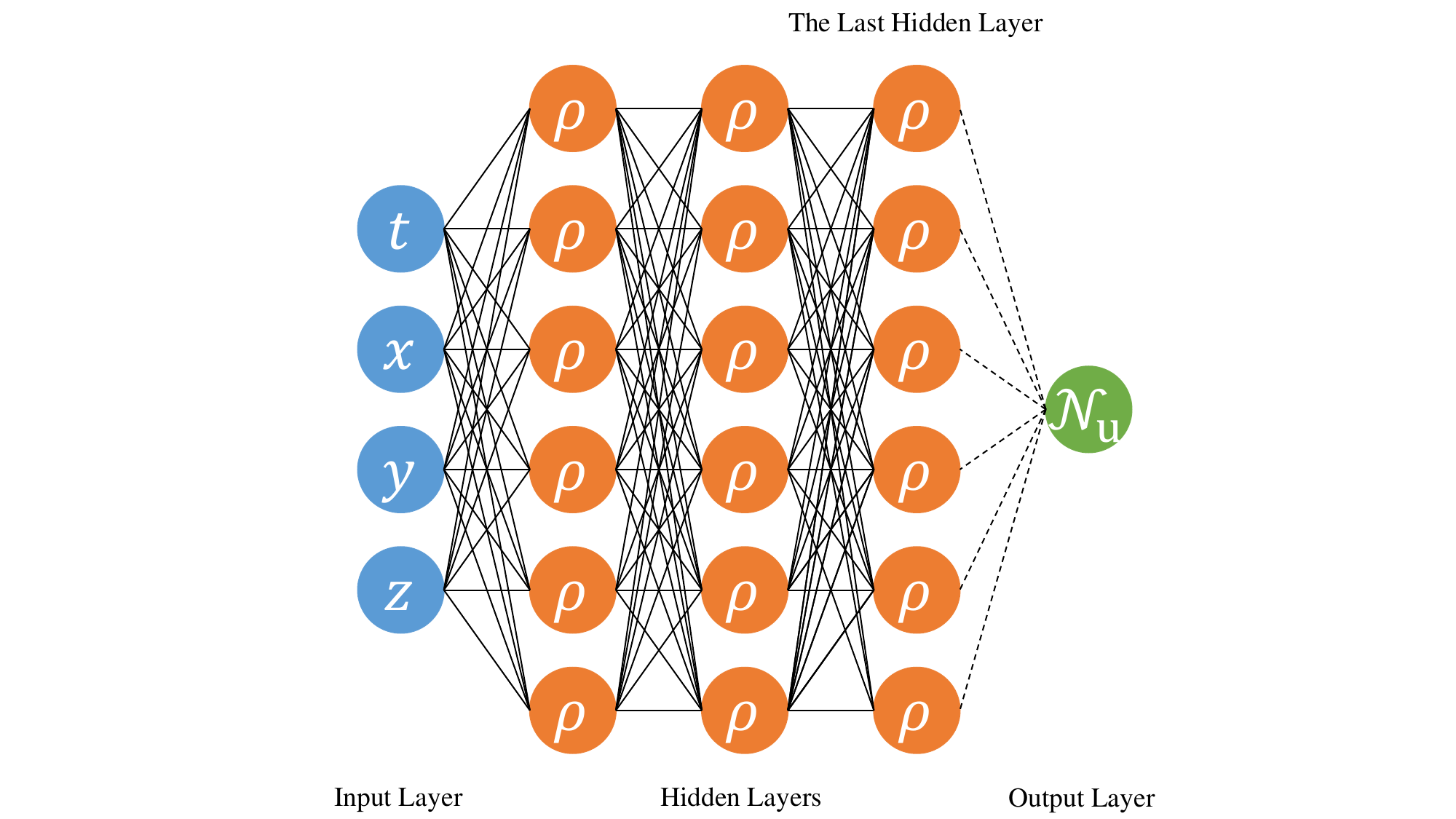}   
	\caption{Architecture of Randomized Neural Networks} 
	\label{nn_structure_surface}
\end{figure}

The architecture of a RaNN resembles that of a standard fully connected neural network. As an example, Figure~\ref{nn_structure_surface} illustrates a RaNN with a 4-dimensional input and a 1-dimensional output. The main distinction lies in how its parameters are handled. The weights connecting the input layer to the hidden layers, as well as those between hidden layers, are randomly generated according to a prescribed probability distribution and remain fixed throughout training. These connections are indicated by the solid black lines in the figure. In contrast, the weights connecting the final hidden layer to the output layer (dashed lines) are obtained via an analytical training procedure.

The activation function {$\rho$} is chosen to be nonlinear and non-polynomial. Let $M$ denote the number of neurons in the last hidden layer. The network output $u$ is given by
\begin{align}
\mathcal{N}_u(t,x,y,z) = \sum_{i=1}^{M} c_i \psi_i(\theta_i, t,x,y,z),
\end{align}
where $\psi_i(\theta_i,t,x,y,z)$ denotes the $i$-th output of the final hidden layer, and the hidden-layer parameters ${\theta_i}$ are randomly generated and then held fixed. The vector $\bc = (c_1, \dots, c_{M}) \in \mathbb{R}^{1\times M}$ contains the output-layer weights, which are the only trainable parameters.

While deep neural networks are known for their remarkable approximation capabilities, their application to partial differential equations typically requires solving a nonlinear and nonconvex optimization problem, often leading to significant optimization error. RaNN addresses this issue by making a modest compromise in approximation power in exchange for a reduction in optimization complexity. This yields a favorable balance between approximation and optimization errors, thereby improving accuracy while substantially reducing computational cost.

\section{RaNN methods for PDEs on static surfaces}
\label{sec:static_surface}

In this section, we employ the RaNN approach to solve PDEs on static surfaces and examine three settings: (i) surfaces described by a set of local parameterizations, (ii) surfaces defined implicitly via a zero-level set function, and (iii) surfaces given only through a finite set of sampled point coordinates. For each case, we develop the corresponding numerical method for solving surface PDEs.

Let $\Gamma$ be a two-dimensional static surface embedded in $\mathbb{R}^3$. A general form of the partial differential equation on the surface $\Gamma$ can be written as:
\begin{align}
\mathcal{L}(u) = f \quad \on \quad \Gamma,
\end{align}
where $u$ is the unknown function defined on the surface, $f$ is a given source term, and $\mathcal{L}$ is a differential operator that generally depends on the geometry of $\Gamma$. We assume $\Gamma$ is a closed surface; otherwise, appropriate boundary conditions must be imposed.

\subsection{Stationary partial differential equations}

We now consider a stationary PDE on a smooth, closed, connected surface $\Gamma$. For illustration, we take the Laplace--Beltrami equation:
\begin{align}
-\Delta_\Gamma u = f \quad \on \quad \Gamma,
\end{align}
where $u$ is the unknown function defined on $\Gamma$, and $f$ is the given source term.  On a closed surface, solvability requires the compatibility condition $\int_\Gamma f\,ds=0$,
and the solution is unique only up to an additive constant. In computations, one may fix the constant mode
either by imposing $\int_\Gamma u\,ds=0$ or by pinning $u(x_\ast)=0$ at a reference point.

\subsubsection{Parametrization-based approaches}\label{subsec:param-approaches}

We restrict attention to \emph{closed} surfaces~$\Gamma\subset\mathbb R^3$.
Assume that $\Gamma$ is described by a finite atlas of $C^k$ parametrizations
\[
  X_i:D_i\to \Gamma_i\subset\Gamma,\qquad i=1,\dots,J,
\]
where each $D_i\subset\R^2$ has $C^{1,1}$ boundary (or is piecewise $C^{1,1}$), this assumption is satisfied in the common case where $D_i$ is a rectangle.
Each $X_i$ is a $C^k$ immersion and admits a continuous extension
$\overline{X}_i:\overline{D_i}\to\overline{\Gamma_i}$ that is a homeomorphism onto its image
$\Gamma_i:=X_i(D_i)$.
The patches form a \emph{patch decomposition}:
\begin{equation}\label{eq:patch-decomp}
  \Gamma = \bigcup_{i=1}^J \Gamma_i,\qquad
  \mathrm{int}(\Gamma_i)\cap \mathrm{int}(\Gamma_j)=\emptyset\ (i\neq j),
\end{equation}
and neighboring patches may meet along smooth interface curves of \emph{positive length}
(positive one-dimensional Hausdorff measure). This setting is standard for parametric surface meshes
and avoids two-dimensional overlaps.

Following \cite{Dziuk2013fem}, we recall how to compute the surface metric tensor and the corresponding
local Laplace--Beltrami operator in each chart.
Let $\bxi=(\xi_1,\xi_2)^\top\in D_i$ and set $X_{i,\alpha}:=\partial_{\xi_\alpha}X_i$ ($\alpha\in\{1,2\}$).
Define the metric tensor $g^{(i)}_{\alpha\beta}:=X_{i,\alpha}\cdot X_{i,\beta}$ and
$g_i:=\det(g^{(i)}_{\alpha\beta})>0$, with inverse $(g_i^{\alpha\beta})=(g^{(i)}_{\alpha\beta})^{-1}$.
The surface measure on $\Gamma_i$ pulls back as
\[
  \dd s = \sqrt{g_i(\bxi)}\,\dd \bxi,\qquad \dd\bxi=\dd\xi_1\,\dd\xi_2,\qquad \bxi\in D_i.
\]
For a surface function $u$ define its local pullback $\tilde u_i:=u\circ X_i$ on $D_i$.
We define the chart operator $\Gop_i$ so that it represents the pullback of $-\Delta_\Gamma$, namely
\begin{equation}\label{eq:opGi}
  \Gop_i \tilde u_i := -\frac{1}{\sqrt{g_i}}\partial_{\xi_\alpha}\!\left(\sqrt{g_i}\,g_i^{\alpha\beta}\,\partial_{\xi_\beta} \tilde u_i\right),
\end{equation}
we adopt the Einstein summation convention over repeated Greek indices $\alpha,\beta\in\{1,2\}$, so that $(-\Delta_\Gamma u)\circ X_i = \Gop_i (u\circ X_i)$ in $D_i$.

Let $\rho:\mathbb{R}\to\mathbb{R}$ be an activation function. For each patch $i$, draw i.i.d.\ random weights
$\bw_i = [w_{i,1}, \cdots, w_{i,M_i}]^\top\in\mathbb{R}^{M_i\times 2}$ from the uniform distribution $U(-r_x, r_x)$, and choose
$\bB_i\in\mathbb{R}^{2\times M_i}$ according to the parameter ranges of $D_i$.
Define the bias vector $\bb_i\in\mathbb{R}^{M_i\times 1}$ by
\[
  \bb_i := -(\bw_i \odot \bB_i^{\top})\,\mathbf{1}_2,
  \qquad \mathbf{1}_2=(1,1)^\top,
\]
where $\odot$ denotes the Hadamard product. The random-feature basis on $D_i$ is
\[
  \bpsi_i(\bxi):=\rho(\bw_i\,\bxi+\bb_i)\in\mathbb{R}^{M_i \times 1},\qquad \bxi\in D_i,
\]
where $\rho(\cdot)$ is applied componentwise.
Define the patchwise trial class
\[
  \mathcal N^{\mathrm{par}}_i := \Big\{v_i = \bc_{i}\bpsi_{i}:\ \bc_i\in\R^{1\times M_i}\Big\},
\qquad
  \bm{\mathcal N}^{\mathrm{par}}_{\bm M}:=\prod_{i=1}^J \mathcal N^{\mathrm{par}}_{i},
\quad \bm M:=(M_1,\dots,M_J).
\]

Since the patches are non-overlapping, inter-patch coupling occurs only through their common boundary curves,
which we refer to as \emph{interfaces}.
Let $\mathcal E$ be the set of unoriented interfaces $e\subset \partial\Gamma_i\cap\partial\Gamma_j$ with $i\neq j$
and positive length.
For $e\in\mathcal E$ shared by patches $(i,j)$, define the parameter-edge preimages
\[
  \partial_e D_i:=X_i^{-1}(e)\subset \partial D_i,
  \qquad
  \partial_e D_j:=X_j^{-1}(e)\subset \partial D_j .
\]
Along $e$, the two parameter edges are naturally identified by the transition map
\[
  \phi_{ji}^e := X_j^{-1}\circ X_i\big|_{\partial_e D_i}:\ \partial_e D_i \to \partial_e D_j,
\]
so that for all $\bxi\in\partial_e D_i$,
\begin{equation}\label{eq:edge-id}
  X_i(\bxi)=X_j\big(\phi_{ji}^e(\bxi)\big)\in e .
\end{equation}
To compare traces across the interface, assume that $\phi_{ji}^e$ admits a $C^{k-1}$ extension to a neighborhood:
there exist open neighborhoods $\mathcal N_{i,e}\subset\overline{D_i}$ and $\mathcal N_{j,e}\subset\overline{D_j}$ and
a $C^{k-1}$ diffeomorphism
\[
  \Phi_{ji}^e:\mathcal N_{i,e}\to \mathcal N_{j,e},
  \qquad \Phi_{ji}^e\big|_{\partial_e D_i}=\phi_{ji}^e,
\]
with inverse $\Phi_{ij}^e=(\Phi_{ji}^e)^{-1}$ and $\Phi_{ji}^e(\partial_e D_i)=\partial_e D_j$.

Given $\bm v=(v_i)_{i=1}^J$, for each $e\in\mathcal E$ shared by $(i,j)$ we measure mismatches on the chosen side
$\partial_e D_i$. Let $\nu_{i,e}(\bxi)$ denote the outward unit normal (in $\mathbb R^2$) to the boundary curve
$\partial_e D_i\subset\partial D_i$, and define the normal derivative by $\partial_{\nu_{i,e}} v:=\nu_{i,e}\cdot\nabla v$.

\smallskip
\noindent
\emph{(i) Value mismatch.}
\[
  \delta_{e}^{0}(\bm v)(\bxi) := v_i(\bxi)-v_j(\Phi_{ji}^e(\bxi)),\qquad \bxi\in \partial_e D_i.
\]

\smallskip
\noindent
\emph{(ii) Normal-derivative mismatch (chain-rule consistent).}
\[
  \delta_{e}^{n}(\bm v)(\bxi)
  := \partial_{\nu_{i,e}} v_i(\bxi) - \partial_{\nu_{i,e}}\big(v_j\circ \Phi_{ji}^e\big)(\bxi),
  \qquad \bxi\in \partial_e D_i.
\]

Collect the two mismatches into
\[
  \Bop_e \bm v := \big(\delta_e^0(\bm v),\ \delta_e^{n}(\bm v)\big),
\qquad
  \Bop \bm v := (\Bop_e\bm v)_{e\in\mathcal E}.
\]

\begin{remark}[Why value and normal derivative are sufficient]\label{rem:val-normal-sufficient}
For $v\in H^2(D)$, the trace satisfies $v|_{\partial D}\in H^{3/2}(\partial D)$ and its tangential derivative obeys
$\partial_\tau v|_{\partial D}=\partial_\tau(v|_{\partial D})\in H^{1/2}(\partial D)$.
Hence, if $v_i=v_j\circ\Phi_{ji}^e$ on $\partial_e D_i$ in $H^{3/2}$, then tangential derivatives match automatically in $H^{1/2}$.
Therefore enforcing the normal derivative match in addition to the value match suffices for global $H^2$-gluing across interfaces.
\end{remark}

\noindent\textbf{Parametrization-based approach.} Let $\bu=(u_i)_{i=1}^J \in \bm{\mathcal N}^{\mathrm{par}}_{\bm M}$ denote the atlas-based approximation, where each
$u_i$ is represented by a local RaNN on $D_i$.
By pulling back the Laplace--Beltrami equation on $\Gamma$ to each chart, we obtain the coupled problem on
$\mathcal D:=\{D_i\}_{i=1}^J$:
\begin{subequations}\label{m_ti_par_bc}
\begin{align}
\Gop_i u_i &= f\circ X_i \quad \text{in } D_i,\quad i=1,\dots,J,\\
\Bop\,\bu &= 0 \quad\ \ \ \ \ \text{on } \mathcal E .
\end{align}
\end{subequations}
Enforcing \eqref{m_ti_par_bc} at collocation points (interior points for the PDE and interface points for $\Bop$)
yields a linear system for the output weights $\bc=\{\bc_i\}_{i=1}^J$, which is solved in the least-squares sense.

\noindent\textbf{Parametrization-based approach(no separate boundary enforcement)}. We define a \emph{single} randomized neural network on embedded coordinates $x\in\Gamma\subset\mathbb R^3$:
\[
  \hat u(x):=\bc\,\rho(\bw x+\bb),\qquad x\in\Gamma,
\]
where $\bw\in\mathbb R^{M\times 3}$ has i.i.d.\ entries distributed as $U(-r_x,r_x)$, $\bc\in\mathbb R^{1\times M}$
is trainable, and
\[
  \bb:=-(\bw\odot \bB^{\!\top})\,\mathbf 1_3,
  \qquad \mathbf 1_3=(1,1,1)^{\top},
\]
with $\bB=[\bB_1,\dots,\bB_{M}]\in\mathbb R^{3\times M}$ chosen to cover the coordinate ranges of $\Gamma$
(e.g., by sampling $\bB_m$ i.i.d.\ from a uniform distribution supported on a bounding box of~$\Gamma$).
On each patch we work with the pullback
\[
  u_i(\bxi):=\hat u(X_i(\bxi))=\bc\,\rho\!\left(\bw X_i(\bxi)+\bb\right),\qquad \bxi\in D_i.
\]

Since $\hat u$ is globally defined on $\Gamma$, let $e=\Gamma_i\cap\Gamma_j$ and $\bxi\in\partial_e D_i$.
By the interface identification $X_i(\bxi)=X_j(\Phi_{ji}^e(\bxi))$, we have
\[
u_i(\bxi)=\hat u(X_i(\bxi))=\hat u(X_j(\Phi_{ji}^e(\bxi)))=u_j(\Phi_{ji}^e(\bxi)),
\]
so $\delta_e^0(\bu)=0$.

Moreover, let $\nu_{i,e}(\bxi)$ be the outward unit normal to $\partial_e D_i\subset\partial D_i$ in $\mathbb R^2$.
Since $u_i=\hat u\circ X_i$ and $u_j\circ \Phi_{ji}^e=\hat u\circ X_j\circ \Phi_{ji}^e=\hat u\circ X_i$ on a neighborhood
, differentiating in the normal direction
and applying the chain rule yields
\[
\partial_{\nu_{i,e}}u_i(\bxi)=\partial_{\nu_{i,e}}\big(u_j\circ \Phi_{ji}^e\big)(\bxi),
\]
hence $\delta_e^{n}(\bu)=0$ and therefore $\Bop\,\bu=0$ holds automatically. Consequently, no separate interface mismatch
enforcement is required for this globally-defined ansatz.

This argument relies on the existence of a $C^{1}$ extension of the transition map in a neighborhood of each interface,
as assumed later in the geometric setting of Section~\ref{subsec:param-analysis}.
Using $\Gop_i$ in \eqref{eq:opGi}, we solve the atlas-form PDE: find $\hat u$ such that, for each $i=1,\dots,J$,
\begin{equation}\label{no_boundary}
  \Gop_i u_i(\bxi)= (f\circ X_i)(\bxi)\quad\text{in }D_i.
\end{equation}
Collocation on the interiors of $\{D_i\}$ yields a linear least-squares system for the shared output weights~$\bc$.

\begin{remark}
The choice of $\bB=[\bB_1,\ldots,\bB_{M}]\in\mathbb{R}^{d\times M}$ imposes no additional computational burden, since it is determined from the surface characteristics.  Let $\tilde D\subset\mathbb{R}^d$ denote the axis-aligned bounding box associated with the chosen input space: 
if $d=3$, $\tilde D=[x_{\min},x_{\max}]\times[y_{\min},y_{\max}]\times[z_{\min},z_{\max}]$, where $[x,y,z]_{\min/\max}=\inf/\sup_{(\xi_1,\xi_2)\in D}X(\xi_1,\xi_2)$, etc.; 
if $d=2$, $\tilde D=[\min(\xi_1),\max(\xi_1)]\times[\min(\xi_2),\max(\xi_2)]$. 
We then draw the columns i.i.d. as $\bB_i\sim U(\tilde D)$.

The above construction of $\bb$ (via $\bB$) is adopted only in our numerical experiments to conveniently cover the input domain.
In the subsequent theoretical analysis, we instead assume that the inner parameters $\bw$ and $\bb$ are sampled independently
from prescribed uniform distributions.
\end{remark}

\begin{remark}[Fixing the additive constant on closed surfaces]\label{rem:const-shift}
Let $\Gamma$ be a closed surface. Since $\ker(\Delta_\Gamma)=\mathrm{span}\{1\}$, the surface Poisson problem
$-\Delta_\Gamma u=f$ (with $\int_\Gamma f\,\dd s=0$) admits solutions that are unique only up to an additive constant.
Accordingly, we interpret $u$ in the quotient space $H^1(\Gamma)/\mathbb R$.

In the remainder of this paper, the convergence analysis is carried out in the mean-zero subspace
(equivalently, in $H^1(\Gamma)/\mathbb R$) by selecting the unique representative satisfying
\[
  \int_\Gamma u\,\dd s=0.
\]
All error estimates are understood for these normalized representatives, i.e., modulo additive constants.

In numerical experiments, we do not enforce the mean-zero constraint explicitly. Instead, we remove the constant mode by
aligning the numerical approximation with the exact solution at a fixed reference point $x_\ast\in\Gamma$:
\[
  \tilde u_h(x):=u_h(x)-u_h(x_\ast)+u(x_\ast).
\]
We then compute errors using $\tilde u_h$. This post-processing differs from the mean-zero normalization only by a constant
and therefore does not affect quantities that are invariant under additive constants, while providing a consistent
representative for $L^2$-type error measurements.
\end{remark}

\subsubsection{Theoretical analysis of the parametrization-based approach}\label{subsec:param-analysis}

We develop a theoretical analysis for the parametrization-based approach with explicit interface
mismatch penalization in a residual-based learning framework using RaNN.
The idea is to decompose the total error into
approximation, statistical, and optimization components in an abstract Hilbert-space setting.
Throughout, we focus on the Laplace--Beltrami equation on a closed, connected surface for concreteness; see
Remark~\ref{rem:general-elliptic} for extensions to more general linear surface PDEs.

\smallskip
\noindent\textbf{Standing solvability.}
Since $\Gamma$ is closed and $\ker(\Delta_\Gamma)=\mathrm{span}\{1\}$, we assume the compatibility condition
$\int_\Gamma f\,\dd s=0$ and select the unique mean-zero solution $u^\ast\in H^2(\Gamma)$ satisfying
$\int_\Gamma u^\ast\,\dd s=0$.

\smallskip
\noindent\textbf{Geometric setting.}
We work with the patch decomposition \eqref{eq:patch-decomp} and the chart operators $\{\Gop_i\}_{i=1}^J$ in
\eqref{eq:opGi}.
To give $\Bop$ a coordinate-invariant meaning while keeping a non-overlapping decomposition, we assume that each
$X_i:D_i\to\Gamma_i$ is the restriction of a $C^k$ chart $\widetilde X_i:\widetilde D_i\to \Gamma$ defined on an open
set $\widetilde D_i\supset\overline{D_i}$.
For each interface $e=\Gamma_i\cap\Gamma_j$ there exist open neighborhoods
$\mathcal N_{i,e}\subset \widetilde D_i$ and $\mathcal N_{j,e}\subset \widetilde D_j$ and a $C^{k-1}$ diffeomorphism
$\Phi_{ji}^e:\mathcal N_{i,e}\to\mathcal N_{j,e}$ such that
$\widetilde X_j\circ \Phi_{ji}^e=\widetilde X_i$ on $\mathcal N_{i,e}$, and
$\Phi_{ji}^e|_{\partial_e D_i}=X_j^{-1}\circ X_i|_{\partial_e D_i}$.
This is the standard atlas compatibility assumption on smooth manifolds and does not introduce two-dimensional
overlaps in the \emph{computational} patch decomposition \eqref{eq:patch-decomp}.

\smallskip
\noindent\textbf{Function spaces.}
Let $H^m(D_i)$ be the usual Sobolev space and define the broken space
\[
  \bm H^m(\mathcal D):=\prod_{i=1}^J H^m(D_i),\qquad \bm v=(v_1,\dots,v_J).
\]
We use the consistent surface integral for $\bm v\in\bm H^0(\mathcal D)$,
\[
  \int_\Gamma v\,\dd s := \sum_{i=1}^J \int_{D_i} v_i(\bxi)\,\sqrt{g_i(\bxi)}\,\dd \bxi,
\]
and define the global mean-zero projector
\begin{equation}\label{eq:Pi-diam-atlas}
  (\Pi_{\diam}\bm v)_i := v_i - \frac{\sum_{m=1}^J \int_{D_m} v_m\sqrt{g_m} \, \dd \bxi}{\sum_{m=1}^J \int_{D_m}\sqrt{g_m}\, \dd \bxi},
  \qquad i=1,\dots,J.
\end{equation}
Since constants have zero mismatches across interfaces and $\Gop_i(\mathrm{const})=0$, $\Pi_\diam$ preserves the interface
constraints and leaves both population and empirical losses unchanged.

\noindent\textbf{Interface mismatch norms and trace spaces.}
For each interface $e\in\mathcal E$, fix an orientation $i(e):=\min\{i,j\}$ and set
$\partial_e D:=\partial_e D_{i(e)}\subset\partial D_{i(e)}$, equipped with the arc-length measure $\dd l$.
Define the computable normed space
\[
  Z_e := \big(L^2(\partial_e D)\big)^2,\qquad
  \|\Bop_e\bm v\|_{Z_e}^2 := \|\delta_e^0(\bm v)\|_{L^2(\partial_e D)}^2 + \|\delta_e^{n}(\bm v)\|_{L^2(\partial_e D)}^2,
\]
and $Z:=\prod_{e\in\mathcal E} Z_e$ with $\|\Bop\bm v\|_Z^2:=\sum_{e\in\mathcal E}\|\Bop_e\bm v\|_{Z_e}^2$.
For stability we also use the trace-scale space
\[
  \mathcal Z_e := H^{3/2}(\partial_e D)\times H^{1/2}(\partial_e D),\qquad
  \|\Bop_e\bm v\|_{\mathcal Z_e}^2
  := \|\delta_e^0(\bm v)\|_{H^{3/2}(\partial_e D)}^2 + \|\delta_e^{n}(\bm v)\|_{H^{1/2}(\partial_e D)}^2,
\]
and $\mathcal Z:=\prod_{e\in\mathcal E}\mathcal Z_e$ with $\|\Bop\bm v\|_{\mathcal Z}^2:=\sum_{e\in\mathcal E}\|\Bop_e\bm v\|_{\mathcal Z_e}^2$.
Note that $\Bop\bm v=0$ is understood in this trace sense.

\medskip
\noindent\textbf{Population and empirical residual losses.}
Let $\tilde f_i:=f\circ X_i$. For $\bm v\in \bm H^2(\mathcal D)$ define
\[
  \mathcal J(\bm v):=
  \sum_{i=1}^J \|\Gop_i v_i-\tilde f_i\|_{L^2(D_i)}^2
  + \eta \|\Bop \bm v\|_{Z}^2.
\]
For the empirical loss, let $\{\bxi_{i,n}\}_{n=1}^{N_i}\overset{\text{i.i.d.}}{\sim}U(D_i)$ and
let $\{\bxi_{e,n}\}_{n=1}^{N_e}\overset{\text{i.i.d.}}{\sim}U(\partial_e D)$ be i.i.d.\ samples with respect to the
normalized arc-length measure on $\partial_e D$. Then
\[
  \widehat{\mathcal J}(\bm v):=
  \sum_{i=1}^J \frac{|D_i|}{N_i}\sum_{n=1}^{N_i}\big|\Gop_i v_i(\bxi_{i,n})-\tilde f_i(\bxi_{i,n})\big|^2
  +\eta \sum_{e\in\mathcal E}\frac{|\partial_e D|}{N_e}\sum_{n=1}^{N_e}\|\Bop_e \bm v(\bxi_{e,n})\|_{\R^2}^2.
\]

\smallskip
\noindent\textbf{Trial function class.}
Write the patchwise RaNN output as
\[
  v_i(\bxi)=\sum_{m=1}^{M_i} c_{i,m}\,\psi_{i,m}(\bxi),\qquad \bxi\in D_i,
\]
where $\psi_{i,m}$ are the fixed random features generated in the previous subsection and $\bc_i=(c_{i,1},\dots,c_{i,M_i})$.
For the statistical analysis we work with an $\ell^1$-bounded coefficient set:
\[
  \mathcal N^{par}_{i,C_N}:=\Big\{v_i=\sum_{m=1}^{M_i}c_{i,m}\psi_{i,m}:\ \|c_i\|_1\le C_N\Big\},\qquad
  \bm{\mathcal N}^{par}_{\bm M,C_N}:=\prod_{i=1}^J \mathcal N^{par}_{i,C_N}.
\]
We impose the global mean-zero constraint through the projector:
\[
  \Pi_\diam\bm{\mathcal N}^{par}_{\bm M,C_N}:=\{\Pi_\diam \bm v:\ \bm v\in \bm{\mathcal N}^{par}_{\bm M,C_N}\}.
\]
The empirical (least-squares) minimizer is
\begin{equation}\label{eq:uM-A-atlas}
  \bm u_{\bm M} \in \arg\min_{\bm v\in \Pi_\diam\bm{\mathcal N}^{par}_{\bm M,C_N}} \widehat{\mathcal J}(\bm v).
\end{equation}
We measure errors in the broken norm
\[
  \|\bm v\|_{\bm H^2(\mathcal D)}^2:=\sum_{i=1}^J \|v_i\|_{H^2(D_i)}^2.
\]

\medskip
\noindent\textbf{Step 1: analytic stability (graph-norm control).}
We first establish that the residual operator controls the broken $H^2$ norm (up to constants), which is the key
ingredient turning residual minimization into an error estimate.

To connect the residual-based loss to the actual error on the surface, we need a stability mechanism that converts
(i) patchwise PDE residuals measured in $L^2(D_i)$ and (ii) inter-patch incompatibilities measured on interfaces,
into a control of the broken $H^2$-error in $\bm H^2(\mathcal D)$.
The argument proceeds in three conceptual steps:
(1) uniform ellipticity and coefficient regularity of the local operators $\{\Gop_i\}$ on each chart, guaranteed by the metric properties of the parametrizations;
(2) a global graph-norm estimate on the closed surface, which controls the mean-zero $H^2(\Gamma)$ norm by the $L^2(\Gamma)$ norm of $\Delta_\Gamma u$;
and (3) a gluing--lifting construction on the atlas: zero interface mismatches allow us to glue a broken function into a global $H^2(\Gamma)$ function,
while a bounded right-inverse of the mismatch trace operator lifts general interface defects into a corrector.
Together, these ingredients yield a graph-norm inequality on the atlas, which is the key stability estimate underlying the subsequent
approximation--statistical--optimization error decomposition.

\begin{lemma}[Uniform ellipticity on each chart]\label{lem:unif-ellip-atlas}
For each $i$ there exist $0<\lambda_i\le \Lambda_i<\infty$ such that
\[
  \lambda_i|\zeta|^2 \le g_i^{\alpha\beta}(\bxi)\zeta_\alpha\zeta_\beta \le \Lambda_i|\zeta|^2,
  \qquad \forall \zeta\in\R^2,\ \forall \bxi\in D_i,
\]
and $g_i^{\alpha\beta},\sqrt{g_i}\in C^{k-1}(\overline{D_i})$.
\end{lemma}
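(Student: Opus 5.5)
The plan is to use the standing geometric assumption that $X_i$ is the restriction of a $C^k$ chart $\widetilde X_i:\widetilde D_i\to\Gamma$ defined on an open set $\widetilde D_i\supset\overline{D_i}$. This makes $\overline{D_i}$ a compact subset of the domain of a $C^k$ immersion, so every quantity built from first derivatives of $X_i$ is continuous up to the boundary.

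\textbf{Regularity.} First, the first derivatives $X_{i,\alpha}=\partial_{\xi_\alpha}\widetilde X_i$ belong to $C^{k-1}(\widetilde D_i)$. Hence the metric entries $g^{(i)}_{\alpha\beta}=X_{i,\alpha}\cdot X_{i,\beta}$ and the determinant $g_i$ lie in $C^{k-1}(\widetilde D_i)$, since products and sums preserve this class. Because $\widetilde X_i$ is an immersion, the vectors $X_{i,1},X_{i,2}$ are linearly independent at every point. By the Gram determinant identity $g_i=|X_{i,1}\times X_{i,2}|^2$, it follows that $g_i>0$ on $\widetilde D_i$. Compactness of $\overline{D_i}$ then gives $\underline g_i:=\min_{\overline{D_i}}g_i>0$. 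On the open set $\{g_i>0\}$ the maps $t\mapsto\sqrt t$ and $t\mapsto 1/t$ are smooth. Therefore $\sqrt{g_i}\in C^{k-1}$, and by Cramer's rule the inverse entries $g_i^{\alpha\beta}=\mathrm{adj}(g^{(i)})_{\alpha\beta}/g_i$ are also $C^{k-1}$. Restricting to $\overline{D_i}$, with derivatives bounded by compactness, gives the claimed membership in $C^{k-1}(\overline{D_i})$.

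\textbf{Ellipticity.} The matrix $G_i(\bxi)=(g^{(i)}_{\alpha\beta})$ is the Gram matrix $A^\top A$ with $A=[X_{i,1}\,|\,X_{i,2}]\in\R^{3\times2}$ of full rank, so it is symmetric positive definite. Its eigenvalues $0<\mu_1(\bxi)\le\mu_2(\bxi)$ depend continuously on $\bxi$, because symmetric eigenvalues are Lipschitz in the entries. On the compact set $\overline{D_i}$ we therefore get the bounds
\[
\mu_2(\bxi)\le \operatorname{tr}G_i(\bxi)\le \max_{\overline{D_i}}\big(|X_{i,1}|^2+|X_{i,2}|^2\big)=:\overline\mu_i,
\qquad
\mu_1(\bxi)=\frac{g_i(\bxi)}{\mu_2(\bxi)}\ge \frac{\underline g_i}{\overline\mu_i}=:\underline\mu_i>0 .
\]
The inverse $(g_i^{\alpha\beta})$ has eigenvalues $1/\mu_2,1/\mu_1$. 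Taking $\lambda_i:=1/\overline\mu_i$ and $\Lambda_i:=1/\underline\mu_i$ and applying the Rayleigh quotient gives $\lambda_i|\zeta|^2\le g_i^{\alpha\beta}\zeta_\alpha\zeta_\beta\le\Lambda_i|\zeta|^2$ for all $\bxi\in\overline{D_i}\supset D_i$.

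\textbf{Main obstacle.} The only delicate point is uniformity up to $\partial D_i$. If $X_i$ were merely an immersion on the open set $D_i$, then $g_i$ could degenerate at the boundary, as happens at the poles of a spherical-coordinate chart, and the lower bound would fail. This is why I would explicitly invoke the extension $\widetilde X_i$ to $\widetilde D_i\supset\overline{D_i}$ from the geometric setting, rather than only the continuous homeomorphic extension $\overline X_i$. With that extension in hand, the argument is just continuity plus compactness.
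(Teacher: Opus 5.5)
Your proof is correct and follows essentially the same route as the paper: the Gram matrix is symmetric positive definite on the compact set $\overline{D_i}$, continuity and compactness give uniform eigenvalue bounds, these bounds invert to bounds for $(g_i^{\alpha\beta})$, and $g_i>0$ yields the $C^{k-1}$ regularity of $\sqrt{g_i}$ and of the inverse. You add two refinements: you invoke the extension $\widetilde X_i$ on $\widetilde D_i\supset\overline{D_i}$ to justify full rank up to the boundary, and you give explicit eigenvalue bounds via trace and determinant; both simply make precise what the paper takes for granted when it asserts that $X_i\in C^k(\overline{D_i})$ is an immersion.
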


\begin{proof}
Since $X_i\in C^k(\overline{D_i})$ is an immersion, the Jacobian $J_i=[X_{i,1}\ X_{i,2}]$ has rank~$2$ on $\overline{D_i}$.
Hence the metric tensor $g^{(i)}(\bxi)=J_i(\bxi)^\top J_i(\bxi)$ is symmetric positive definite for every $\bxi\in\overline{D_i}$.
Let $\lambda_{\min}(\bxi)$ and $\lambda_{\max}(\bxi)$ denote the extreme eigenvalues of $g^{(i)}(\bxi)$.
By continuity of $J_i$ (hence of $g^{(i)}$) and compactness of $\overline{D_i}$,
$\lambda_{\min}$ and $\lambda_{\max}$ attain their extrema and satisfy
$0<\underline\lambda:=\min_{\overline{D_i}}\lambda_{\min}\le \max_{\overline{D_i}}\lambda_{\max}=: \overline\lambda<\infty$.
Therefore, for $(g_i^{\alpha\beta})=(g^{(i)}_{\alpha\beta})^{-1}$,
\[
  \frac1{\overline\lambda}|\zeta|^2 \le g_i^{\alpha\beta}(\bxi)\zeta_\alpha\zeta_\beta \le \frac1{\underline\lambda}|\zeta|^2,
  \qquad \forall \zeta\in\R^2,\ \forall \bxi\in D_i,
\]
so we may take $\lambda_i=1/\overline\lambda$ and $\Lambda_i=1/\underline\lambda$.
Moreover, $g^{(i)}_{\alpha\beta}=X_{i,\alpha}\cdot X_{i,\beta}\in C^{k-1}(\overline{D_i})$, hence
$g_i=\det(g^{(i)}_{\alpha\beta})\in C^{k-1}(\overline{D_i})$ and $g_i>0$ implies $\sqrt{g_i}\in C^{k-1}(\overline{D_i})$.
Since inversion is smooth on SPD matrices, $g_i^{\alpha\beta}\in C^{k-1}(\overline{D_i})$ as well.
\end{proof}

\begin{theorem}[Elliptic regularity on a closed connected surface (graph norm)]\label{thm:graph-surface-H2}
Let $\Gamma\subset\mathbb R^3$ be a closed, connected $C^{k}$ surface with $k\ge 2$.
Then there exists $C_\Gamma>0$ such that for all $u\in H^2(\Gamma)$ with $\int_\Gamma u\,\dd s=0$,
\[
  \|u\|_{H^2(\Gamma)} \le C_\Gamma\,\|\Delta_\Gamma u\|_{L^2(\Gamma)}.
\]
\end{theorem}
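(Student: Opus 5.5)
The plan is to combine a global Poincaré/energy estimate with local elliptic $H^2$-regularity in charts, glued by a finite partition of unity.

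\emph{Step 1: $H^1$ control.} For $u\in H^2(\Gamma)$, integration by parts on the closed surface (no boundary terms) gives
\[
\|\nabla_\Gamma u\|_{L^2(\Gamma)}^2=-\int_\Gamma u\,\Delta_\Gamma u\,\dd s\le \|u\|_{L^2(\Gamma)}\|\Delta_\Gamma u\|_{L^2(\Gamma)}.
\]
The constraint $\int_\Gamma u\,\dd s=0$ will then be used through the Poincaré inequality $\|u\|_{L^2(\Gamma)}\le C_P\|\nabla_\Gamma u\|_{L^2(\Gamma)}$ on mean-zero functions. I will prove it by the standard compactness argument, which uses three facts: $H^1(\Gamma)\hookrightarrow L^2(\Gamma)$ compactly (Rellich via a finite atlas), connectedness of $\Gamma$, and $\nabla_\Gamma w=0$ forcing $w$ to be constant, so that the mean-zero condition gives $w=0$. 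Combining the two estimates yields $\|u\|_{H^1(\Gamma)}\le C\|\Delta_\Gamma u\|_{L^2(\Gamma)}$.

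\emph{Step 2: local $H^2$ estimates.} Cover $\Gamma$ by finitely many $C^k$ charts $\widetilde X_i$ on open sets $\widetilde D_i\supset\overline{D_i}$, which are available from the geometric setting. Take a subordinate smooth partition of unity $\{\chi_i\}$ with $\chi_i\circ\widetilde X_i$ compactly supported in $\widetilde D_i$. In a chart, $-\Delta_\Gamma$ becomes the operator $\Gop_i$ of \eqref{eq:opGi}, written in non-divergence form as
\[
\Gop_i w=-g_i^{\alpha\beta}\partial_\alpha\partial_\beta w+b^\beta\partial_\beta w .
\]
By Lemma~\ref{lem:unif-ellip-atlas}, applied on the slightly larger domains, the principal part is uniformly elliptic with $C^{k-1}\subset C^{0,1}$ coefficients when $k\ge2$, and $b^\beta\in C^{k-2}\subset L^\infty$. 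Set $w_i=(\chi_i u)\circ\widetilde X_i\in H^2_0$. Since $\Gop_i$ is a second-order operator and $\chi_i$ is smooth, the commutator of $\Gop_i$ with multiplication by $\chi_i$ is first order, so
\[
\Gop_i w_i=(\chi_i\,(-\Delta_\Gamma u))\circ\widetilde X_i+\text{(first-order terms in }u),
\]
and hence $\|\Gop_i w_i\|_{L^2}\le C(\|\Delta_\Gamma u\|_{L^2(\Gamma)}+\|u\|_{H^1(\Gamma)})$. For compactly supported $H^2$ functions in $\mathbb R^2$, interior regularity gives
\[
\|w_i\|_{H^2}\le C(\|\Gop_i w_i\|_{L^2}+\|w_i\|_{H^1}).
\]
I will obtain this by freezing coefficients and applying the Fourier identity $\|D^2 w\|=\|\Delta w\|$, or equivalently by the difference-quotient method of Gilbarg--Trudinger/Evans. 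Lipschitz continuity of $g_i^{\alpha\beta}$ is what makes this work.

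\emph{Step 3: assembly.} Sum over $i$. The norm equivalence $\|u\|_{H^2(\Gamma)}\simeq\sum_i\|w_i\|_{H^2}$, which is essentially the definition of $H^2(\Gamma)$ via the atlas, uses $k\ge2$ so that chart changes preserve $H^2$. Together with Step 1 this gives
\[
\|u\|_{H^2(\Gamma)}\le C(\|\Delta_\Gamma u\|_{L^2}+\|u\|_{H^1})\le C_\Gamma\|\Delta_\Gamma u\|_{L^2(\Gamma)}.
\]

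\emph{Main obstacle.} I expect the hardest part to be the local estimate in Step 2 under the minimal regularity $k=2$. There the coefficients $g_i^{\alpha\beta}$ are only $C^1$, and the lower-order coefficients coming from $\partial_\alpha(\sqrt{g_i}g_i^{\alpha\beta})$ are only $C^0$. I will handle this by working in divergence form: a difference-quotient argument applied to the weak formulation with test function $-\Delta_h^{-}\Delta_h^{+}w_i$ needs only Lipschitz $\sqrt{g_i}g_i^{\alpha\beta}$, and the factor $1/\sqrt{g_i}$ is bounded. Step 1 also needs care to state Rellich compactness and the characterization of $\nabla_\Gamma w=0$ on a connected $C^2$ surface. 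Both follow chartwise, using connectedness to chain the local constants together.
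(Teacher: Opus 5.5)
Your proposal is correct and follows the same route as the paper. The paper likewise removes the lower-order term by combining the mean-zero Poincar\'e inequality with the identity $\|\nabla_\Gamma u\|_{L^2(\Gamma)}^2=-\int_\Gamma u\,\Delta_\Gamma u\,\dd s$, exactly as in your Step~1; the only difference is that the paper cites the estimate $\|u\|_{H^2(\Gamma)}\le C_1(\|\Delta_\Gamma u\|_{L^2(\Gamma)}+\|u\|_{L^2(\Gamma)})$ (Hebey), whereas your Steps~2--3 prove it (with an $H^1$ lower-order term) via a partition of unity, chart commutators and difference quotients, which validly fills in that citation.
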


\begin{proof}
On the compact $C^k$ manifold $\Gamma$ without boundary, standard elliptic regularity for the strongly elliptic operator
$\Delta_\Gamma$ yields
\[
  \|u\|_{H^2(\Gamma)}\le C_1\big(\|\Delta_\Gamma u\|_{L^2(\Gamma)}+\|u\|_{L^2(\Gamma)}\big),
  \qquad \forall u\in H^2(\Gamma),
\]
see, e.g., \cite{Hebey2000manifolds}.
To remove the $\|u\|_{L^2}$ term, use that $\Gamma$ is connected and $u$ has zero mean, so the surface Poincar\'e inequality gives
$\|u\|_{L^2(\Gamma)}\le C_P\|\nabla_\Gamma u\|_{L^2(\Gamma)}$.
Moreover, since $\Gamma$ has no boundary,
\[
  \|\nabla_\Gamma u\|_{L^2(\Gamma)}^2
  =-\int_\Gamma u\,\Delta_\Gamma u\,\dd s
  \le \|u\|_{L^2(\Gamma)}\,\|\Delta_\Gamma u\|_{L^2(\Gamma)}.
\]
Combining the two inequalities yields $\|u\|_{L^2(\Gamma)}\le C\,\|\Delta_\Gamma u\|_{L^2(\Gamma)}$, and substituting back gives the claim.
\end{proof}

We now bridge the gap between the broken (patchwise) Sobolev space on the parameter domains
and the global Sobolev space on the surface. The key idea is standard:
(i) if the interface mismatches vanish, a broken $H^2$ field can be glued into a global $H^2(\Gamma)$ function;
(ii) if mismatches do not vanish, they can be lifted into a bulk correction supported near interfaces.
These two steps allow us to transfer the surface elliptic regularity estimate in
Theorem~\ref{thm:graph-surface-H2} to the atlas formulation.

\begin{lemma}[Gluing: zero mismatches imply a global $H^2(\Gamma)$ function]\label{lem:glue}
Let $\bm v\in \bm H^2(\mathcal D)$ satisfy $\Bop \bm v=0$ (in the trace sense). Then there exists a unique $u\in H^2(\Gamma)$ such that $u\circ X_i=v_i$ a.e.\ in $D_i$ for all $i$.
Moreover, there exist atlas-dependent constants $C_{r},C_{l}>0$ such that
\[
  \|u\|_{H^2(\Gamma)} \le C_{r}\,\|\bm v\|_{\bm H^2(\mathcal D)},
  \qquad
  \|\bm v\|_{\bm H^2(\mathcal D)} \le C_{l}\,\|u\|_{H^2(\Gamma)}.
\]
\end{lemma}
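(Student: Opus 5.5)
We define $u$ patchwise by $u:=v_i\circ X_i^{-1}$ on $\Gamma_i$. Since the patches have disjoint interiors and the interfaces have zero surface measure, this fixes $u$ a.e. on $\Gamma$ and gives $u\in L^2(\Gamma)$. Uniqueness is then immediate: any $u$ with $u\circ X_i=v_i$ a.e. is determined a.e. on every $\Gamma_i$. The substance of the lemma is twofold: to show $u\in H^2(\Gamma)$, and to prove the two norm equivalences.

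\textbf{Norm equivalence on a single patch.} For any $w$ on $\Gamma_i$ we compare $\|w\|_{H^2(\Gamma_i)}$ with $\|w\circ X_i\|_{H^2(D_i)}$. Surface derivatives up to order two pull back through the chart. The relevant expressions are the covariant gradient $g_i^{\alpha\beta}\partial_\beta$ and the Hessian built from second derivatives plus Christoffel terms involving first derivatives of $g^{(i)}$. With $k\ge 3$, or $k\ge2$ with a $C^{1,1}$ chart, Lemma~\ref{lem:unif-ellip-atlas} makes these coefficients bounded and the metric uniformly elliptic, and $\sqrt{g_i}$ is bounded above and below. Hence the two norms are equivalent with constants depending on $\lambda_i,\Lambda_i$ and the $C^{k-1}$ norms of the metric. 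Summing over $i$ gives $\|\bm v\|_{\bm H^2(\mathcal D)}\le C_l\sum_i\|u\|_{H^2(\Gamma_i)}\le C_l\|u\|_{H^2(\Gamma)}$. The same bound gives $\sum_i\|u\|^2_{H^2(\Gamma_i)}\lesssim\|\bm v\|^2_{\bm H^2(\mathcal D)}$, and once we know $u\in H^2(\Gamma)$, $\|u\|^2_{H^2(\Gamma)}=\sum_i\|u\|^2_{H^2(\Gamma_i)}$ because the interfaces are null sets. That yields $C_r$.

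\textbf{Global $H^2$ regularity (the main obstacle).} Being $H^2$ is a local property, so it suffices to work near each interface $e$ shared by patches $i$ and $j$; points in the interior of a patch are already covered. Vertices where several patches meet form a finite set, and since $H^2\subset C^0$ in two dimensions we will handle them by a capacity argument, or simply by treating the vertex as the common point of several edges. Near $e$ we use the extended chart $\widetilde X_i$ on $\mathcal N_{i,e}$, where the patch-$j$ side corresponds to $\Phi_{ij}^e(\mathcal N_{j,e}\cap D_j)$. Set $w:=v_i$ on the $D_i$ side and $w:=v_j\circ\Phi_{ji}^e$ on the other side. Because $\Phi_{ji}^e$ is $C^{k-1}$ with $k-1\ge1$, the composition preserves $H^2$ when $k\ge3$, and $H^2$ when $k=2$ in the Lipschitz-gradient sense; this is the chain-rule step. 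The standard transmission lemma then applies: a function that is $H^2$ on both sides of a $C^{1,1}$ curve lies in $H^2$ of the union if and only if its traces and its normal-derivative traces agree. First derivatives then have no jump, so the distributional second derivatives carry no singular part on the curve. Value agreement is $\delta_e^0=0$ and normal-derivative agreement is $\delta_e^n=0$, while Remark~\ref{rem:val-normal-sufficient} supplies the matching of tangential derivatives. Concretely, we verify $\nabla w\in H^1$ by integrating by parts against test functions on each side. The boundary terms cancel because the full gradient traces coincide: normal components by $\delta^n_e=0$, tangential components by differentiating the matched values. The same cancellation with $w$ itself shows $w\in H^1$. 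This gives $w\in H^2$ near $\partial_eD_i$, and therefore $u\in H^2$ near $e$.

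A partition of unity subordinate to patch interiors and interface neighbourhoods then combines these local results into $u\in H^2(\Gamma)$. The delicate point is the treatment of corners and vertices. I would argue that the finite vertex set has zero $H^2$-capacity relative to $W^{2,2}$ only in a weak sense, so the safest route is to treat vertices explicitly: in the extended chart around a vertex, apply the transmission argument to each edge in turn, working on sectors.
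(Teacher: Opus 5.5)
Your proposal is correct and follows the same route as the paper: define $u$ patchwise, use $\delta_e^0=0$ and $\delta_e^n=0$ (plus the tangential-derivative matching they induce) in the standard broken-$H^2$ transmission characterization to get $u\in H^2(\Gamma)$, and obtain $C_r,C_l$ from stability of $H^2$ under the $C^k$ chart pullbacks and finiteness of the atlas---you simply spell out the localization and integration-by-parts details that the paper delegates to a citation. Your worry about vertices is unnecessary: the side-by-side integration by parts on the finitely many Lipschitz pieces already works at a vertex, since a single point contributes nothing to the interface integrals; alternatively, a point has zero $H^1$-capacity in two dimensions, and applying this to $w$ and to $\nabla w$ makes it removable for $H^2$ (the relevant fact is not an $H^2$-capacity statement).
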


\begin{proof}
Define $u$ patchwise by $u|_{\Gamma_i}:=v_i\circ X_i^{-1}$. Since each $X_i:D_i\to\Gamma_i$ is a $C^k$ diffeomorphism onto
its image, Sobolev pullback stability yields $u|_{\Gamma_i}\in H^2(\Gamma_i)$.

Because $v_i\in H^2(D_i)$, the boundary traces satisfy $v_i|_{\partial D_i}\in H^{3/2}(\partial D_i)$ and
$\partial_{\nu_{i}} v_i|_{\partial D_i}\in H^{1/2}(\partial D_i)$ (here $\nu_i$ is the outward unit normal on $\partial D_i$).
The condition $\Bop\bm v=0$ means that, on every interface $e=\Gamma_i\cap\Gamma_j$ identified via the transition map $\Phi_{ji}^e$,
both the value traces and the normal-derivative traces agree across the interface in the appropriate
$H^{3/2}/H^{1/2}$ trace sense. Hence $u$ is single-valued across interfaces and its co-normal derivatives match.

By standard broken-Sobolev gluing characterizations (piecewise $H^2$ plus continuity of $u$ and $\partial_\nu u$ across interfaces),
transported through the chart identifications, we conclude $u\in H^2(\Gamma)$; see, e.g., \cite{Brenner2008analysis}.

Finally, the norm equivalences follow from boundedness of the $H^2$ norms under the chart maps and the finiteness of the atlas:
\[
  \|u\|_{H^2(\Gamma)}^2 \simeq \sum_{i=1}^J \|u\|_{H^2(\Gamma_i)}^2
  \simeq \sum_{i=1}^J \|u\circ X_i\|_{H^2(D_i)}^2
  = \sum_{i=1}^J \|v_i\|_{H^2(D_i)}^2,
\]
where the hidden constants depend only on uniform bounds on $DX_i$, $D^2X_i$ and their inverses on $\overline{D_i}$.
\end{proof}

\begin{lemma}[Lifting of interface mismatch traces]\label{lem:lifting-atlas}
There exists a bounded linear operator
\[
  \mathcal R:\mathrm{Range}(\Bop)\subset \mathcal Z \to \bm H^2(\mathcal D)
\]
such that for every $h\in \mathrm{Range}(\Bop)$,
\[
  \Bop(\mathcal R h)=h,\qquad \int_\Gamma (\mathcal R h)\,\dd s=0,\qquad
  \|\mathcal R h\|_{\bm H^2(\mathcal D)} \le C_{\mathrm{lift}}\,\|h\|_{\mathcal Z},
\]
where $C_{\mathrm{lift}}$ depends only on the atlas. Moreover one may take
$C_{\mathrm{lift}}\lesssim \big(\max_{e\in\mathcal E}C_e\big)\sqrt{d_{\max}}$, where
$d_{\max}:=\max_{1\le i\le J}\#\{e\in\mathcal E:\ e\text{ is incident to }i\}$ and $C_e$ are the local extension constants
defined below.
\end{lemma}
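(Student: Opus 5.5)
The construction is local: build one corrector per interface and sum the pieces. For each $e\in\mathcal E$ with orientation $i=i(e)$ and neighbour $j$, the target data are $h_e=(h_e^0,h_e^n)\in H^{3/2}(\partial_e D)\times H^{1/2}(\partial_e D)$.

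\textbf{Step 1 (edge lifting).} I would invoke the inverse trace theorem for $H^2$ on the $C^{1,1}$ (or piecewise $C^{1,1}$) domain $D_i$. It gives a bounded linear operator $E_e$ such that $w_e:=E_e h_e\in H^2(D_i)$, with $w_e|_{\partial_e D}=h_e^0$, $\partial_{\nu_{i,e}}w_e|_{\partial_e D}=h_e^n$, and
\[
\|w_e\|_{H^2(D_i)}\le C_e\|h_e\|_{\mathcal Z_e}.
\]
Multiplying by a fixed smooth cutoff $\chi_e$ keeps $w_e$ supported in $\mathcal N_{i,e}$ and away from the other edges of $\partial D_i$. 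The cutoff is chosen with $\chi_e\equiv1$ near the relative interior of $\partial_e D$, and its effect is absorbed into $C_e$.

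The delicate point is corners of rectangles and endpoints of $e$. There the pair $(h^0,h^n)$ must satisfy compatibility conditions for an $H^2$ lift to exist. This is why the domain of $\mathcal R$ is $\mathrm{Range}(\Bop)$ rather than all of $\mathcal Z$: any $h=\Bop\bm v$ with $\bm v\in\bm H^2(\mathcal D)$ automatically satisfies them. I would make the lifting local and linear near each endpoint by reflecting across the adjacent edge or using a corner extension. If that fails, the fallback is to define $E_e$ on $\mathrm{Range}(\Bop_e)$ through the quotient norm, setting $E_e h:=$ the minimal-norm $\bm v$-component with that mismatch. This is linear and bounded by closedness of the range in the quotient topology. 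I expect this endpoint/compatibility issue to be the main obstacle.

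\textbf{Step 2 (assembly).} Set $(\tilde{\mathcal R}h)_i:=\sum_{e:\,i(e)=i}\chi_e w_e$ and $(\tilde{\mathcal R}h)_m=0$ on the non-oriented sides. Since $\delta_e^0,\delta_e^n$ are defined as ($i$-side) minus ($j$-side composed with $\Phi_{ji}^e$), the $j$-side contribution vanishes. The supports of the $\chi_e$ are disjoint from other interfaces, so no cross-contamination occurs and $\Bop_e(\tilde{\mathcal R}h)=h_e$ for every $e$. Because at most $d_{\max}$ pieces overlap on each patch, Cauchy--Schwarz gives
\[
\|\tilde{\mathcal R}h\|^2_{\bm H^2(\mathcal D)}\le d_{\max}\max_e C_e^2\,\|h\|_{\mathcal Z}^2.
\]

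\textbf{Step 3 (mean zero).} Define $\mathcal R:=\Pi_\diam\tilde{\mathcal R}$. Constants carry no mismatch, so $\Bop\mathcal R h=h$ still holds. The projector $\Pi_\diam$ is bounded on $\bm H^2(\mathcal D)$ with a constant depending only on $\int\sqrt{g_m}$ and $\sup\sqrt{g_m}$, which by Lemma~\ref{lem:unif-ellip-atlas} are atlas constants. This yields the claimed bound with $C_{\mathrm{lift}}\lesssim(\max_e C_e)\sqrt{d_{\max}}$.
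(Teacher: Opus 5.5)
Your route is the paper's route: a one-sided $H^2$ lift of $h_e$ on $D_{i(e)}$, a cutoff $\chi_e$, summation over the interfaces owned by each patch with the $\sqrt{d_{\max}}$ Cauchy--Schwarz count, and removal of the mean. It breaks at the step you flag, and neither of your repairs closes it.

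\textbf{Localization at endpoints.} You require $\chi_e w_e$ to reproduce $h_e$ on all of $\partial_e D$. You also require it to vanish, with its normal derivative, on $\partial D_i\setminus\partial_e D$, and your Step~2 relies on this to rule out cross-contamination. These two requirements conflict at an endpoint $P$ of $\partial_e D$. For rectangular $D_i$ on a closed surface such endpoints always exist, because the corners of $\Gamma_i$ force smooth interfaces to end, and another interface $e'$ starts at $P$. Since $H^2(D_i)\subset C^0(\overline{D_i})$, the lift would have to equal both $h_e^0(P)$ and $0$ at $P$. Yet $h_e^0(P)\neq 0$ for, e.g., $h=\Bop\bm v$ with $v_i\equiv 1$ near $P$ and all other components zero. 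If instead $\chi_e<1$ near the endpoints, then $\Bop_e(\chi_e w_e)=\chi_e h_e\neq h_e$ on a set of positive length, which is not negligible in any trace norm. The paper's proof makes the same move when it calls these endpoint neighbourhoods ``irrelevant in the trace sense''. Any repair must couple all patches and interfaces around $P$, which the one-interface-at-a-time bookkeeping behind $C_{\mathrm{lift}}\lesssim(\max_e C_e)\sqrt{d_{\max}}$ does not capture. At the level of gradients no repair exists at all, as the next paragraph shows.

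\textbf{The fallback and the stated bound.} Your fallback bounds the minimal-norm preimage only in the quotient norm $\inf\{\|\bm v\|_{\bm H^2(\mathcal D)}:\Bop\bm v=h\}$. This is equivalent to $\|h\|_{\mathcal Z}$ on $\mathrm{Range}(\Bop)$ exactly when $\mathrm{Range}(\Bop)$ is closed in $\mathcal Z$, and it is not closed. Take $h_e=(0,g)$ with $g$ smooth, $g\equiv 1$ near $P$ and $g=0$ near the other endpoint, and zero mismatch on every other interface.
\begin{itemize}
\item A realizing $\bm v$ glues, across the remaining interfaces around $P$, into a function $U$. This $U$ is $H^2$ on a neighbourhood of $P$ slit along $e$, and its gradient jumps across the slit by $g$ times a smooth nonvanishing vector.
\item In local polar coordinates about $P$ with the slit at $\theta=0$, use the bound $|F(r,2\pi)-F(r,0)|^2\le 2\pi\int_0^{2\pi}|\partial_\theta F|^2\,d\theta$. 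Dividing by $r$ and integrating gives $\int_0^\delta|F^+-F^-|^2\,r^{-1}\,dr\le 2\pi\|\nabla F\|_{L^2}^2$.
\item With $F=\nabla U$ this yields $\int_0^\delta |g(r)|^2 r^{-1}\,dr\lesssim \|U\|_{H^2}^2$. The left side is infinite, so $(0,g)\notin\mathrm{Range}(\Bop)$.
\item The truncations $g_n:=\eta_n g$, with $\eta_n$ vanishing near $P$, do give data in the range: lift on patch $i$ alone, with the other components zero.
\item $(0,g_n)\to(0,g)$ in $\mathcal Z$, because functions vanishing near an endpoint are dense in $H^{1/2}$ of an interval.
\end{itemize}
A bounded linear $\mathcal R$ would make $\mathcal R(0,g_n)$ Cauchy in $\bm H^2(\mathcal D)$, with a limit $\bm w$ satisfying $\Bop\bm w=(0,g)$, a contradiction.

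So no construction can give $\|\mathcal R h\|_{\bm H^2(\mathcal D)}\le C\|h\|_{\mathcal Z}$ once interfaces have endpoints; the statement itself must change. Possible fixes are to measure $h$ in the quotient norm, to add weighted $H^{1/2}_{00}$-type corner terms to $\mathcal Z_e$, or to restrict to decompositions whose interfaces are closed curves.
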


\begin{proof}
\noindent\textbf{Step 1: local right-inverse for a restricted $H^2$ trace.}
Fix an interface $e\in\mathcal E$ and its chosen side $i(e)$. Consider the restricted trace operator on $\partial_e D_{i(e)}$,
\[
  \gamma_{i(e),e}:\ H^2(D_{i(e)})\to H^{3/2}(\partial_e D_{i(e)})\times H^{1/2}(\partial_e D_{i(e)}),\quad
  \gamma_{i(e),e}(v):=\big(v,\partial_{\nu_{i(e),e}}v\big)\big|_{\partial_e D_{i(e)}},
\]
where $\nu_{i(e),e}$ is the outward unit normal (in $\mathbb R^2$) along $\partial_e D_{i(e)}$.

Since $\partial D_{i(e)}$ is assumed to be piecewise $C^{1,1}$, the full-boundary trace map
\[
  \gamma_{i(e)}:\ H^2(D_{i(e)})\to H^{3/2}(\partial D_{i(e)})\times H^{1/2}(\partial D_{i(e)}),\quad
  \gamma_{i(e)}(v):=\big(v,\partial_{\nu_{i(e)}}v\big)\big|_{\partial D_{i(e)}},
\]
is surjective and admits a bounded right inverse
$R_{i(e)}:H^{3/2}(\partial D_{i(e)})\times H^{1/2}(\partial D_{i(e)})\to H^2(D_{i(e)})$
(see, e.g., \cite{McLean2000analysis} and standard localization arguments for piecewise $C^{1,1}$ boundaries).

Using boundary localization, we construct a bounded linear operator
\[
  E_e:\mathcal Z_e\to H^2(D_{i(e)}),\qquad \mathcal Z_e:=H^{3/2}(\partial_e D_{i(e)})\times H^{1/2}(\partial_e D_{i(e)}),
\]
such that
\begin{equation}\label{eq:local-right-inv-strong}
  \gamma_{i(e),e}(E_e h_e)=h_e \quad \text{on }\partial_e D_{i(e)}\ \text{(in the trace sense)},\qquad
  \|E_e h_e\|_{H^2(D_{i(e)})}\le C_e\,\|h_e\|_{\mathcal Z_e},
\end{equation}
where $C_e$ depends only on the atlas (in particular on the piecewise $C^{1,1}$ character of $\partial D_{i(e)}$ and the choice of the interface segment).

\smallskip
\noindent
\emph{One concrete construction.}
Let $h_e=(g_0,g_1)\in \mathcal Z_e$.
First extend $(g_0,g_1)$ from the boundary segment $\partial_e D_{i(e)}$ to the full boundary $\partial D_{i(e)}$
by a bounded linear extension operator
\[
  \mathcal E^{\partial}_e:\ H^{3/2}(\partial_e D_{i(e)})\times H^{1/2}(\partial_e D_{i(e)})
  \to H^{3/2}(\partial D_{i(e)})\times H^{1/2}(\partial D_{i(e)}),
\]
so that $\mathcal E^{\partial}_e(g_0,g_1)=(\tilde g_0,\tilde g_1)$ and $(\tilde g_0,\tilde g_1)=(g_0,g_1)$ on $\partial_e D_{i(e)}$.
Set $\tilde v:=R_{i(e)}(\tilde g_0,\tilde g_1)\in H^2(D_{i(e)})$, so that $\gamma_{i(e)}(\tilde v)=(\tilde g_0,\tilde g_1)$.

Next choose a smooth cutoff $\chi_e\in C^\infty(\overline{D_{i(e)}})$ supported in a collar neighborhood of $\partial_e D_{i(e)}$
such that $\partial_{\nu_{i(e),e}}\chi_e=0$ on $\partial_e D_{i(e)}$, and $\chi_e\equiv 1$ on $\partial_e D_{i(e)}$
except possibly in arbitrarily small neighborhoods of its two endpoints (which is irrelevant in the trace sense).
Moreover, we choose the collar neighborhood and the cutoff $\chi_e$ so that
$\operatorname{supp}(\chi_e)\cap \partial D_{i(e)}\subset \partial_e D_{i(e)}$.
Consequently, the trace of $E_e h_e=\chi_e\tilde v$ and its normal derivative trace vanish on
$\partial D_{i(e)}\setminus \partial_e D_{i(e)}$ in the trace sense, so different interface liftings do not interfere.
Define $E_e h_e:=\chi_e\,\tilde v$. Then on $\partial_e D_{i(e)}$ (in the trace sense),
\[
(E_e h_e)|_{\partial_e D_{i(e)}}=\tilde v|_{\partial_e D_{i(e)}}=g_0,\qquad
\partial_{\nu_{i(e),e}}(E_e h_e)|_{\partial_e D_{i(e)}}
=\chi_e\,\partial_{\nu_{i(e),e}}\tilde v + (\partial_{\nu_{i(e),e}}\chi_e)\tilde v
=\partial_{\nu_{i(e),e}}\tilde v
=g_1,
\]
so \eqref{eq:local-right-inv-strong} holds. Boundedness in $H^2(D_{i(e)})$ follows from product rules and boundedness of $\chi_e$
and its derivatives.

\medskip
\noindent\textbf{Step 2: assemble a global lifting on each patch.}
For $h=(h_e)_{e\in\mathcal E}\in\mathrm{Range}(\Bop)$ define, patchwise,
\[
  (\mathcal Rh)_i:=\sum_{e:\,i(e)=i}E_e h_e,\qquad i=1,\dots,J.
\]
Because each $E_e h_e$ is localized near $\partial_e D_{i(e)}$ and reproduces $(g_0,g_1)$ on that segment,
its contribution to $\gamma_{i(e),e}((\mathcal Rh)_{i(e)})$ is exactly $h_e$.
Therefore the above assembly is well-defined, and hence $\Bop(\mathcal Rh)=h$. 

\medskip
\noindent\textbf{Step 3: norm bound.}
Using Cauchy--Schwarz and that each patch is incident to at most $d_{\max}$ interfaces,
\[
  \|(\mathcal Rh)_i\|_{H^2(D_i)}^2
  =\Big\|\sum_{e:\,i(e)=i}E_e h_e\Big\|_{H^2(D_i)}^2
  \le d_{\max}\sum_{e:\,i(e)=i}\|E_e h_e\|_{H^2(D_i)}^2
  \le d_{\max}\Big(\max_{e}C_e^2\Big)\sum_{e:\,i(e)=i}\|h_e\|_{\mathcal Z_e}^2.
\]
Summing over $i$ yields
\[
  \|\mathcal Rh\|_{\bm H^2(\mathcal D)}\le (\max_e C_e)\sqrt{d_{\max}}\,\|h\|_{\mathcal Z}.
\]

\medskip
\noindent\textbf{Step 4: enforce mean-zero normalization.}
Let $c:=|\Gamma|^{-1}\int_\Gamma (\mathcal Rh)\,\dd s$ and subtract $c$ from every component. Since constants have zero
interface mismatches, $\Bop(\mathcal Rh)$ is unchanged while $\int_\Gamma (\mathcal Rh)\,\dd s=0$ holds.
\end{proof}

\begin{theorem}[Graph-norm control with interface mismatch traces]\label{thm:graphA-atlas-trace-eta}
Assume Lemma~\ref{lem:unif-ellip-atlas} and that $\Gamma$ is closed and connected. 
Let $\eta>0$. Then there exists a constant $C_{\mathrm{tr}}>0$, depending only on $\Gamma$, the atlas and $\eta$, such that
for every $\bm v\in \bm H^2(\mathcal D)$ with $\int_\Gamma v\,\dd s=0$,
\[
  \|\bm v\|_{\bm H^2(\mathcal D)}
  \le C_{\mathrm{tr}}
  \left(
     \Big(\sum_{i=1}^J \|\Gop_i v_i\|_{L^2(D_i)}^2\Big)^{1/2}
     + \sqrt{\eta}\,\|\Bop \bm v\|_{\mathcal Z}
  \right),
\]
Moreover, one can take
\[
  C_{\mathrm{tr}} := \max\!\left\{C_1,\ \frac{C_2}{\sqrt{\eta}}\right\},
\]
where $C_1,C_2>0$ depend only on $\Gamma$ and the atlas.
\end{theorem}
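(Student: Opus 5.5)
The plan is to split $\bm v$ into a glued conforming part and an interface corrector, apply the surface regularity estimate to the conforming part, and bound the corrector by its mismatch traces. Given $\bm v\in\bm H^2(\mathcal D)$ with $\int_\Gamma v\,\dd s=0$, set $h:=\Bop\bm v\in\mathrm{Range}(\Bop)\subset\mathcal Z$. Lemma~\ref{lem:lifting-atlas} gives a corrector $\bm w:=\mathcal R h$ with $\Bop\bm w=h$, $\int_\Gamma w\,\dd s=0$ and $\|\bm w\|_{\bm H^2(\mathcal D)}\le C_{\mathrm{lift}}\|\Bop\bm v\|_{\mathcal Z}$. The difference $\bm z:=\bm v-\bm w$ then lies in $\bm H^2(\mathcal D)$, satisfies $\Bop\bm z=0$ by linearity of $\Bop$, and has zero mean. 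By the triangle inequality,
\[
\|\bm v\|_{\bm H^2(\mathcal D)}\le\|\bm z\|_{\bm H^2(\mathcal D)}+C_{\mathrm{lift}}\|\Bop\bm v\|_{\mathcal Z},
\]
so it remains to control $\bm z$.

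Since $\Bop\bm z=0$, Lemma~\ref{lem:glue} yields a global $u\in H^2(\Gamma)$ with $u\circ X_i=z_i$ and $\|\bm z\|_{\bm H^2(\mathcal D)}\le C_l\|u\|_{H^2(\Gamma)}$. The consistent surface integral identifies $\int_\Gamma u\,\dd s$ with the atlas sum, so $u$ has mean zero. Theorem~\ref{thm:graph-surface-H2} then gives $\|u\|_{H^2(\Gamma)}\le C_\Gamma\|\Delta_\Gamma u\|_{L^2(\Gamma)}$. Next I pull the right-hand side back to the charts. Because $(-\Delta_\Gamma u)\circ X_i=\Gop_i z_i$ and the patches overlap only on null sets,
\[
\|\Delta_\Gamma u\|_{L^2(\Gamma)}^2=\sum_i\int_{D_i}|\Gop_i z_i|^2\sqrt{g_i}\,\dd\bxi\le \max_i\|\sqrt{g_i}\|_{L^\infty}\sum_i\|\Gop_i z_i\|_{L^2(D_i)}^2 .
\]
The factor $\max_i\|\sqrt{g_i}\|_{L^\infty}$ is finite by Lemma~\ref{lem:unif-ellip-atlas}.

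Finally, I write $\Gop_i z_i=\Gop_i v_i-\Gop_i w_i$. Lemma~\ref{lem:unif-ellip-atlas} gives $C^{k-1}$ coefficients with $k\ge2$, so $\Gop_i$, written in nondivergence form, has bounded coefficients. Hence $\|\Gop_i w_i\|_{L^2(D_i)}\le C_{\mathcal G}\|w_i\|_{H^2(D_i)}$, with $C_{\mathcal G}$ depending on $\|g_i^{\alpha\beta}\|_{W^{1,\infty}}$, $\|\sqrt{g_i}\|_{W^{1,\infty}}$ and $\|1/\sqrt{g_i}\|_{L^\infty}$. Combining the three steps,
\[
\|\bm v\|\le C_lC_\Gamma C_g\Big(\big(\textstyle\sum_i\|\Gop_iv_i\|^2\big)^{1/2}+C_{\mathcal G}C_{\mathrm{lift}}\|\Bop\bm v\|_{\mathcal Z}\Big)+C_{\mathrm{lift}}\|\Bop\bm v\|_{\mathcal Z},
\]
with $C_g:=\max_i\|\sqrt{g_i}\|_{L^\infty}^{1/2}$. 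I then take $C_1:=C_lC_\Gamma C_g$ and $C_2:=C_{\mathrm{lift}}(1+C_lC_\Gamma C_gC_{\mathcal G})$. Writing $\|\Bop\bm v\|_{\mathcal Z}=\eta^{-1/2}\sqrt\eta\,\|\Bop\bm v\|_{\mathcal Z}$, the bound becomes $C_1(\cdot)+\frac{C_2}{\sqrt\eta}\sqrt\eta\|\Bop\bm v\|_{\mathcal Z}$, which is at most $\max\{C_1,C_2/\sqrt\eta\}$ times the sum. This is the claimed form of $C_{\mathrm{tr}}$.

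The main obstacle is the transfer of the mean-zero condition and the PDE identity through the gluing step. Both $\bm v$ and $\bm w$ must be mean-zero in the atlas sense, and Lemma~\ref{lem:lifting-atlas} guarantees this for $\bm w$. One also needs the identity $(\Delta_\Gamma u)\circ X_i=-\Gop_i z_i$ to hold a.e. on each chart, which follows from $u|_{\Gamma_i}=z_i\circ X_i^{-1}$ and the definition \eqref{eq:opGi}. Every other step is a direct composition of the earlier lemmas. I will double-check that all constants are independent of $\bm v$ and that only the $\eta$-dependence enters via the final rescaling.
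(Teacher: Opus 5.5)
Your proposal is correct and follows the paper's proof essentially step for step. You lift $\Bop\bm v$ by Lemma~\ref{lem:lifting-atlas}, glue the mismatch-free, mean-zero remainder by Lemma~\ref{lem:glue}, apply Theorem~\ref{thm:graph-surface-H2} with the $\sqrt{g_i}$-weighted pullback, bound $\Gop_i$ of the corrector by its broken $H^2$ norm, and rescale by $\sqrt{\eta}$. Your constants $C_1=C_lC_\Gamma C_g$ and $C_2=C_{\mathrm{lift}}(1+C_lC_\Gamma C_gC_{\mathcal G})$ coincide with the paper's $A$ and $B$, noting that your $C_g$ is the square root of the paper's $C_g$.
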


\begin{proof}
\noindent\textbf{Step 1: remove interface mismatches by lifting.}
Let $h:=\Bop \bm v\in \mathrm{Range}(\Bop)\subset \mathcal Z$ and set $\bm\psi:=\mathcal R h$ from Lemma~\ref{lem:lifting-atlas}.
Define $\bm w:=\bm v-\bm\psi$. Then $\Bop \bm w=0$. Moreover, since both $\int_\Gamma v\,\dd s=0$ and
$\int_\Gamma(\mathcal Rh)\,\dd s=0$, we also have $\int_\Gamma w\,\dd s=0$.

\medskip
\noindent\textbf{Step 2: glue $\bm w$ into a global surface function.}
Because $\Bop\bm w=0$ is understood in the natural trace spaces $H^{3/2}$ (values) and $H^{1/2}$ (normal derivatives),
Lemma~\ref{lem:glue} yields a unique $u\in H^2(\Gamma)$ such that $u\circ X_i=w_i$ a.e.\ in $D_i$ for all $i$.
Furthermore, by stability of Sobolev pullbacks under $C^k$ chart maps and finiteness of the atlas, there exists
an atlas-dependent constant $C_{l}>0$ such that
\begin{equation}\label{eq:w-to-u}
  \|\bm w\|_{\bm H^2(\mathcal D)} \le C_{l}\,\|u\|_{H^2(\Gamma)} .
\end{equation}

\medskip
\noindent\textbf{Step 3: apply surface elliptic regularity and transfer to charts.}
By Theorem~\ref{thm:graph-surface-H2} (mean-zero on a closed connected surface),
\[
  \|u\|_{H^2(\Gamma)} \le C_\Gamma\,\|\Delta_\Gamma u\|_{L^2(\Gamma)}.
\]
Using $(-\Delta_\Gamma u)\circ X_i=\Gop_i(u\circ X_i)=\Gop_i w_i$ and $\dd s=\sqrt{g_i}\,\dd\bxi$, we obtain
\[
  \|\Delta_\Gamma u\|_{L^2(\Gamma)}^2
  =\sum_{i=1}^J\int_{D_i}|\Gop_i w_i(\bxi)|^2\,\sqrt{g_i(\bxi)}\,\dd\bxi
  \le C_g\sum_{i=1}^J\|\Gop_i w_i\|_{L^2(D_i)}^2,
\]
where $C_g:=\max_i\|\sqrt{g_i}\|_{L^\infty(D_i)}<\infty$ depends only on the atlas.
Combining with \eqref{eq:w-to-u} yields
\begin{equation}\label{eq:w-by-Gw}
  \|\bm w\|_{\bm H^2(\mathcal D)}
  \le C_{l}C_\Gamma\sqrt{C_g}\,
      \Big(\sum_{i=1}^J\|\Gop_i w_i\|_{L^2(D_i)}^2\Big)^{1/2}.
\end{equation}

\medskip
\noindent\textbf{Step 4: bound $\Gop \bm w$ by $\Gop \bm v$ and the lifting term.}
By linearity, $\Gop_i w_i=\Gop_i v_i-\Gop_i\psi_i$. Hence
\[
  \Big(\sum_{i=1}^J\|\Gop_i w_i\|_{L^2(D_i)}^2\Big)^{1/2}
  \le
  \Big(\sum_{i=1}^J\|\Gop_i v_i\|_{L^2(D_i)}^2\Big)^{1/2}
  +
  \Big(\sum_{i=1}^J\|\Gop_i \psi_i\|_{L^2(D_i)}^2\Big)^{1/2}.
\]
By Lemma~\ref{lem:unif-ellip-atlas}, the coefficients of $\Gop_i$ are bounded on $\overline{D_i}$, hence
$\Gop_i:H^2(D_i)\to L^2(D_i)$ is bounded. Let
\[
  C_0:=\max_{1\le i\le J}\|\Gop_i\|_{\mathcal L(H^2(D_i),L^2(D_i))}<\infty.
\]
Then
\[
  \Big(\sum_{i=1}^J\|\Gop_i \psi_i\|_{L^2(D_i)}^2\Big)^{1/2}
  \le C_0\,\|\bm\psi\|_{\bm H^2(\mathcal D)}
  \le C_0\,C_{\mathrm{lift}}\,\|\Bop\bm v\|_{\mathcal Z},
\]
where we used Lemma~\ref{lem:lifting-atlas} in the last step.

\medskip
\noindent\textbf{Step 5: conclude for $\bm v$.}
Combining \eqref{eq:w-by-Gw} with the estimate from Step~4 yields
\[
  \|\bm w\|_{\bm H^2(\mathcal D)}
  \le A\Big(\sum_{i=1}^J \|\Gop_i v_i\|_{L^2(D_i)}^2\Big)^{1/2}
     + A\,C_0C_{\mathrm{lift}}\|\Bop\bm v\|_{\mathcal Z},
\]
where $A:=C_{l}C_\Gamma\sqrt{C_g}$.
Using $\|\bm v\|_{\bm H^2(\mathcal D)}\le \|\bm w\|_{\bm H^2(\mathcal D)}+\|\bm\psi\|_{\bm H^2(\mathcal D)}$ and
$\|\bm\psi\|_{\bm H^2(\mathcal D)}\le C_{\mathrm{lift}}\|\Bop\bm v\|_{\mathcal Z}$, we further obtain
\[
  \|\bm v\|_{\bm H^2(\mathcal D)}
  \le A\Big(\sum_{i=1}^J \|\Gop_i v_i\|_{L^2(D_i)}^2\Big)^{1/2}
     + B\,\|\Bop\bm v\|_{\mathcal Z},
\]
with $B:=C_{\mathrm{lift}}+A\,C_0C_{\mathrm{lift}}$.
Finally, since
\(
B\|\Bop\bm v\|_{\mathcal Z}
=\frac{B}{\sqrt{\eta}}\big(\sqrt{\eta}\,\|\Bop\bm v\|_{\mathcal Z}\big),
\)
we have
\[
  \|\bm v\|_{\bm H^2(\mathcal D)}
  \le \max\!\left\{A,\frac{B}{\sqrt{\eta}}\right\}
  \left(
     \Big(\sum_{i=1}^J \|\Gop_i v_i\|_{L^2(D_i)}^2\Big)^{1/2}
     + \sqrt{\eta}\,\|\Bop \bm v\|_{\mathcal Z}
  \right).
\]
Therefore the claim holds with $C_{\mathrm{tr}}:=\max\{A,B/\sqrt{\eta}\}$, i.e.,
$C_1=A$ and $C_2=B$ in the statement.
\end{proof}

\begin{lemma}[Upper bound of the training graph norm]\label{lem:upper-graph-atlas}
Assume Lemma~\ref{lem:unif-ellip-atlas} and that each interface map $\Phi_{ji}^e$ is $C^{1}$ and bi-Lipschitz on its neighborhood, i.e.,
$\|D\Phi_{ji}^e\|_{L^\infty(\mathcal N_{i,e})}+\|D(\Phi_{ji}^e)^{-1}\|_{L^\infty(\mathcal N_{j,e})}<\infty$.
Let $\eta>0$. Then there exists $C_U>0$, depending only on $\eta$, the atlas, and the coefficient bounds of $\{\Gop_i\}_{i=1}^J$, such that
for all $\bm v\in \bm H^2(\mathcal D)$,
\[
  \sum_{i=1}^J \|\Gop_i v_i\|_{L^2(D_i)}^2 + \eta \|\Bop \bm v\|_{Z}^2
  \le C_U\,\|\bm v\|_{\bm H^2(\mathcal D)}^2,
\]
where $\Bop_e\bm v=(\delta_e^0(\bm v),\delta_e^n(\bm v))$ and $Z_e=(L^2(\partial_e D))^2$.
\end{lemma}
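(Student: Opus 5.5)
The bound splits into a bulk term and an interface term, and I will bound each separately before adding them.

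\emph{Bulk term.} Expand $\Gop_i$ in nondivergence form:
\[
\Gop_i v=-g_i^{\alpha\beta}\partial_{\alpha\beta}v-\frac{1}{\sqrt{g_i}}\partial_\alpha\big(\sqrt{g_i}g_i^{\alpha\beta}\big)\partial_\beta v .
\]
By Lemma~\ref{lem:unif-ellip-atlas}, $g_i^{\alpha\beta}$ and $\sqrt{g_i}$ lie in $C^{k-1}(\overline{D_i})$ with $k\ge2$, and $\sqrt{g_i}$ is bounded below on $\overline{D_i}$. Hence both coefficient families are in $L^\infty(D_i)$. This gives
\[
\|\Gop_i v_i\|_{L^2(D_i)}\le C_0\|v_i\|_{H^2(D_i)},
\]
and summing over $i$ gives the bulk contribution.

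\emph{Interface term.} Fix $e$ shared by $(i,j)$, with $i=i(e)$ the chosen side, and use
\[
|\delta_e^0|^2\le 2|v_i|^2+2|v_j\circ\Phi_{ji}^e|^2 ,
\]
and likewise for $\delta_e^n$. The $v_i$ parts are handled by the trace inequality on the piecewise $C^{1,1}$ domain $D_i$. It gives $\|v_i\|_{L^2(\partial_eD_i)}\le C\|v_i\|_{H^1(D_i)}$. Applied to $\nabla v_i\in H^1(D_i)^2$, it gives $\|\partial_{\nu}v_i\|_{L^2(\partial_eD_i)}\le C\|v_i\|_{H^2(D_i)}$, since $|\nu|=1$.

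For the $v_j$ parts, the chain rule gives
\[
\partial_\nu(v_j\circ\Phi_{ji}^e)=\nu\cdot (D\Phi_{ji}^e)^\top(\nabla v_j)\circ\Phi_{ji}^e .
\]
This is bounded pointwise by $\|D\Phi_{ji}^e\|_\infty\,|(\nabla v_j)\circ\Phi_{ji}^e|$. It remains to transport the boundary $L^2$ norm along $\phi_{ji}^e$. Since $\Phi_{ji}^e$ is bi-Lipschitz and maps $\partial_eD_i$ onto $\partial_eD_j$, the arc-length Jacobian of $\phi_{ji}^e$ is bounded below by $\|D(\Phi_{ji}^e)^{-1}\|_\infty^{-1}$. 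The change of variables then gives
\[
\int_{\partial_eD_i}|w\circ\phi_{ji}^e|^2\,\dd l\le \|D(\Phi_{ij}^e)\|_\infty\int_{\partial_eD_j}|w|^2\,\dd l .
\]
Applying this with $w=v_j$ and $w=\nabla v_j$, followed by the trace inequality on $D_j$, bounds these terms by $C\|v_j\|_{H^2(D_j)}^2$.

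\emph{Assembly.} Summing over the finitely many interfaces, each patch is counted at most $d_{\max}$ times. This gives $\|\Bop\bm v\|_Z^2\le C_B\|\bm v\|_{\bm H^2(\mathcal D)}^2$, and we may take
\[
C_U=C_0^2+\eta C_B .
\]

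\emph{Main obstacle.} The delicate step is the trace of $\nabla v_j$ on the piecewise $C^{1,1}$ curve $\partial_e D_j$ and its transport by $\phi_{ji}^e$. One must check that an $H^1$ trace inequality holds on Lipschitz domains, which covers rectangles, so that $\nabla v_j\in H^1$ has an $L^2$ trace. One must also check that a bi-Lipschitz boundary map distorts arc length boundedly. Both are standard, and I would cite them rather than reprove them.

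Alternatively, one could avoid the boundary change of variables altogether. Since $v_j\circ\Phi_{ji}^e\in H^2(\mathcal N_{i,e}\cap D_i)$, with norm controlled by the $C^1$ and bi-Lipschitz bounds (the $C^{k-1}$ regularity, with $k\ge 3$, supplying $D^2\Phi$ if needed), the trace inequality can be applied directly on the $D_i$ side.
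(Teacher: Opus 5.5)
Your main argument is correct and follows the paper's proof. It bounds the bulk term through the nondivergence form with $L^\infty$ coefficients, and the interface term by combining the trace inequality on each side, the chain rule for $\partial_{\nu_{i,e}}(v_j\circ\Phi_{ji}^e)$, a bi-Lipschitz change of variables along the curve (you make the arc-length Jacobian bound explicit, which the paper does not), and a $d_{\max}$ incidence count, giving the same $C_U=C_G+\eta C_B d_{\max}$. The only flaw is in your closing alternative: in the analysis setting $\widetilde X_j\circ\Phi_{ji}^e=\widetilde X_i$ and the patches do not overlap, so $\Phi_{ji}^e$ maps $\mathcal N_{i,e}\cap D_i$ to the far side of $\partial_e D_j$, outside $D_j$, where $v_j$ is undefined. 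That route therefore needs an $H^2$ extension of $v_j$ across $\partial_e D_j$ (plus $k\ge 3$), whereas your main boundary-trace argument needs neither.
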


\begin{proof}
\noindent\textbf{Step 1: interior terms.}
Writing $\Gop_i$ in non-divergence form,
\[
  \Gop_i v = -g_i^{\alpha\beta}\partial_{\alpha\beta}v - b_i^\beta \partial_\beta v,
  \qquad b_i^\beta:=\frac{1}{\sqrt{g_i}}\partial_\alpha(\sqrt{g_i}\,g_i^{\alpha\beta}),
\]
Lemma~\ref{lem:unif-ellip-atlas} yields $g_i^{\alpha\beta},b_i^\beta\in L^\infty(D_i)$.
Hence $\|\Gop_i v_i\|_{L^2(D_i)}\le C_{G,i}\|v_i\|_{H^2(D_i)}$, and summing over $i$ gives
\[
  \sum_{i=1}^J \|\Gop_i v_i\|_{L^2(D_i)}^2 \le C_G\,\|\bm v\|_{\bm H^2(\mathcal D)}^2,
\]
for $C_G:=\max_i C_{G,i}^2$.

\smallskip
\noindent\textbf{Step 2: interface terms.}
Fix an interface $e$ shared by $(i,j)$ and measure $\Bop_e\bm v$ on the chosen side $\partial_e D_{i(e)}$.
By the bi-Lipschitz property of $\Phi_{ji}^e$ and the change-of-variables formula on curves,
\[
  \|\psi\circ\Phi_{ji}^e\|_{L^2(\partial_e D_i)}\le C_{\Phi,e}\|\psi\|_{L^2(\partial_e D_j)}.
\]
Using the trace theorem on Lipschitz domains,
\[
  \|v_\ell\|_{L^2(\partial_e D_\ell)}+\|\partial_{\nu_{\ell,e}}v_\ell\|_{L^2(\partial_e D_\ell)}
  \le C_{\mathrm{Lip}}\|v_\ell\|_{H^2(D_\ell)},\qquad \ell\in\{i,j\},
\]
and the chain rule for $\partial_{\nu_{i,e}}(v_j\circ\Phi_{ji}^e)$, we obtain
\[
  \|\delta_e^0(\bm v)\|_{L^2(\partial_e D_{i(e)})}
  \le C_{e}\big(\|v_i\|_{H^2(D_i)}+\|v_j\|_{H^2(D_j)}\big),
\]
\[
  \|\delta_e^{n}(\bm v)\|_{L^2(\partial_e D_{i(e)})}
  \le C_{e}\big(\|v_i\|_{H^2(D_i)}+\|v_j\|_{H^2(D_j)}\big),
\]
with $C_e$ depending only on the atlas and $\Phi_{ji}^e$.
Therefore,
\[
  \|\Bop_e\bm v\|_{Z_e}^2
  =\|\delta_e^0(\bm v)\|_{L^2(\partial_e D)}^2+\|\delta_e^n(\bm v)\|_{L^2(\partial_e D)}^2
  \le C_{B,e}\big(\|v_i\|_{H^2(D_i)}^2+\|v_j\|_{H^2(D_j)}^2\big).
\]
Summing over $e\in\mathcal E$ and counting incidences yields a factor $d_{\max}$:
\[
  \|\Bop\bm v\|_Z^2 \le C_B\,d_{\max}\,\|\bm v\|_{\bm H^2(\mathcal D)}^2,
\]
where $C_B:=\max_{e\in\mathcal E} C_{B,e}$.

\smallskip
\noindent\textbf{Step 3: combine and insert $\eta$.}
Combining the two estimates gives
\[
  \sum_{i=1}^J \|\Gop_i v_i\|_{L^2(D_i)}^2 + \eta \|\Bop \bm v\|_{Z}^2
  \le \big(C_G+\eta\,C_B d_{\max}\big)\,\|\bm v\|_{\bm H^2(\mathcal D)}^2.
\]
Thus the claim holds with $C_U:=C_G+\eta\,C_B d_{\max}$.
\end{proof}

\begin{assumption}[Interface inverse estimate on the trial space]
\label{ass:inv-atlas}
Consider a sequence of trial spaces $\bm{\mathcal N}^{par}_{\bm M,C_N}$ with total width
$M_{\rm tot}:=\sum_{i=1}^J M_i$.
For each $M_{\rm tot}$, the random parameters are drawn from bounded distributions,
\[
  \|w_{i,m}\|_\infty \le r_{M_{\rm tot}},\qquad |b_{i,m}|\le b_{\max}\quad\text{a.s.},
  \qquad i=1,\dots,J,\ m=1,\dots,M_i,
\]
where the bandwidth $r_{M_{\rm tot}}$ may depend on $M_{\rm tot}$, but remains finite for each fixed $M_{\rm tot}$.

Fix an interface $e\in\mathcal E$ equipped with the arc-length measure.
Let
\[
  S_e:=\{\Bop_e \bm v:\ \bm v\in \bm{\mathcal N}^{par}_{\bm M,C_N}\}\subset \mathcal Z_e
\]
be the finite-dimensional trace-mismatch space induced by the trial class.
We assume that, with high probability over the random draw of features, there exists a constant
$C_{{\rm inv},e}>0$ such that
\[
  \|g\|_{\mathcal Z_e}\le C_{{\rm inv},e}\,\|g\|_{Z_e}\qquad \forall g\in S_e,
\]
and hence, with $C_{\rm inv}:=\max_{e\in\mathcal E}C_{{\rm inv},e}$,
\[
  \|\Bop \bm v\|_{\mathcal Z}\le C_{\rm inv}\,\|\Bop \bm v\|_{Z}\qquad
  \forall \bm v\in \bm{\mathcal N}^{par}_{\bm M,C_N}.
\]
Moreover, along the sequence of trial spaces used in the analysis, $C_{\rm inv}$ is assumed to remain
\emph{uniformly controlled} in the sense that it does not exhibit pathological blow-up as $M_{\rm tot}$ increases.
In particular, $C_{\rm inv}$ may depend on the atlas, the activation regularity, and the effective bandwidth
$r_{M_{\rm tot}}$, as well as on the conditioning of the induced trace bases on each interface.
\end{assumption}

\begin{corollary}[Graph-norm control in terms of the computable $Z$-norm]
\label{cor:graphA-atlas-Z}
Let $\bm u^\ast=(\tilde u_1^\ast,\dots,\tilde u_J^\ast)$ be the pullbacks of the exact mean-zero solution,
so that $\Gop_i \tilde u_i^\ast=\tilde f_i$ and $\Bop\bm u^\ast=0$.
Assume Theorem~\ref{thm:graphA-atlas-trace-eta} and Assumption~\ref{ass:inv-atlas}.
Then for every $\bm v\in \Pi_\diam\bm{\mathcal N}^{par}_{\bm M,C_N}$,
\begin{equation}\label{eq:graphA-atlas-Z}
  \|\bm v-\bm u^\ast\|_{\bm H^2(\mathcal D)}
  \le C_{gn}\Big(
     \Big(\sum_{i=1}^J \|\Gop_i v_i-\tilde f_i\|_{L^2(D_i)}^2\Big)^{1/2}
     + \sqrt{\eta}\,\|\Bop \bm v\|_{Z}\Big),
\end{equation}
where one may take $C_{gn}:=C_{\mathrm{tr}}\max\{1,C_{\mathrm{inv}}\}$.
\end{corollary}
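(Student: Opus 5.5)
The plan is to apply Theorem~\ref{thm:graphA-atlas-trace-eta} to the error $\bm e:=\bm v-\bm u^\ast$, and then replace the trace-scale norm $\|\Bop\bm e\|_{\mathcal Z}$ by the computable norm $\|\Bop\bm v\|_Z$ using Assumption~\ref{ass:inv-atlas}.

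\textbf{Step 1: the error lies in the right class.} Since $\bm v\in\Pi_\diam\bm{\mathcal N}^{par}_{\bm M,C_N}$ and each feature $\psi_{i,m}$ is smooth on $\overline{D_i}$, we have $\bm v\in\bm H^2(\mathcal D)$. Moreover, $\Pi_\diam$ makes $\int_\Gamma v\,\dd s=0$. The pullbacks satisfy $\bm u^\ast\in\bm H^2(\mathcal D)$ by Lemma~\ref{lem:glue}, and $\int_\Gamma u^\ast\,\dd s=0$ by the choice of representative. Hence $\bm e\in\bm H^2(\mathcal D)$ is mean-zero, and Theorem~\ref{thm:graphA-atlas-trace-eta} gives
\[
\|\bm e\|_{\bm H^2(\mathcal D)}\le C_{\mathrm{tr}}\Big(\Big(\sum_i\|\Gop_i e_i\|_{L^2(D_i)}^2\Big)^{1/2}+\sqrt\eta\,\|\Bop\bm e\|_{\mathcal Z}\Big).
\]

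\textbf{Step 2: rewrite both terms.} By linearity and $\Gop_i\tilde u_i^\ast=\tilde f_i$, we get $\Gop_i e_i=\Gop_i v_i-\tilde f_i$. This is exactly the interior residual in \eqref{eq:graphA-atlas-Z}. By linearity of $\Bop$ and $\Bop\bm u^\ast=0$ (the trace-sense gluing of the exact solution, cf.\ Remark~\ref{rem:val-normal-sufficient}), we get $\Bop\bm e=\Bop\bm v$.

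\textbf{Step 3: pass from $\mathcal Z$ to $Z$.} This is the only delicate step. Assumption~\ref{ass:inv-atlas} is stated for $\bm w\in\bm{\mathcal N}^{par}_{\bm M,C_N}$, whereas $\bm v=\Pi_\diam\bm w$ for some such $\bm w$. I will use the fact that $\Pi_\diam$ subtracts a single global constant, and constants have zero value and normal-derivative mismatch across every interface. Hence $\Bop\bm v=\Bop\bm w\in\prod_e S_e$, and the inverse estimate gives
\[
\|\Bop\bm v\|_{\mathcal Z}\le C_{\mathrm{inv}}\|\Bop\bm v\|_Z.
\]

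\textbf{Step 4: combine.} Inserting Steps 2 and 3 into Step 1 and bounding $C_{\mathrm{tr}}$ and $C_{\mathrm{tr}}C_{\mathrm{inv}}$ by $C_{\mathrm{tr}}\max\{1,C_{\mathrm{inv}}\}$ yields \eqref{eq:graphA-atlas-Z}.

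I expect no real obstacle. The points to state carefully are that the estimate holds on the high-probability event of Assumption~\ref{ass:inv-atlas}, and that the inverse estimate is applied only to the trial-space mismatch $\Bop\bm v$, never to $\Bop\bm u^\ast$. The latter is harmless because $\Bop\bm u^\ast=0$.
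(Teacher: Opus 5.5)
Your proposal is correct and is the derivation the corollary is meant to follow from; the paper gives no written proof. Apply Theorem~\ref{thm:graphA-atlas-trace-eta} to the mean-zero error $\bm v-\bm u^\ast$, use $\Gop_i(v_i-\tilde u_i^\ast)=\Gop_i v_i-\tilde f_i$ and $\Bop(\bm v-\bm u^\ast)=\Bop\bm v$, then invoke Assumption~\ref{ass:inv-atlas} on $\Bop\bm v$. Your observation that $\Pi_\diam$ only subtracts a global constant, so $\Bop\bm v=\Bop\bm w$ with $\bm w\in\bm{\mathcal N}^{par}_{\bm M,C_N}$ and the inverse estimate genuinely applies, correctly fills the one detail the paper leaves implicit.
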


\begin{lemma}[Residual-to-error bound]
\label{lem:err-vs-lossA-atlas}
Assume Corollary~\ref{cor:graphA-atlas-Z}. Then for any $\bm v\in \Pi_\diam\bm{\mathcal N}^{par}_{\bm M,C_N}$,
\[
  \|\bm v-\bm u^\ast\|_{\bm H^2(\mathcal D)}^2
  \le 2 C_{gn}^2\, \mathcal J(\bm v),
\]
where $\mathcal J(\bm v):=
\sum_{i=1}^J \|\Gop_i v_i-\tilde f_i\|_{L^2(D_i)}^2 + \eta\|\Bop \bm v\|_{Z}^2$.
\end{lemma}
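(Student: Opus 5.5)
The plan is to start from the estimate \eqref{eq:graphA-atlas-Z} of Corollary~\ref{cor:graphA-atlas-Z}, which holds for every $\bm v\in\Pi_\diam\bm{\mathcal N}^{par}_{\bm M,C_N}$, and square it. The right-hand side there is $C_{gn}(a+b)$ with
\[
  a:=\Big(\sum_{i=1}^J \|\Gop_i v_i-\tilde f_i\|_{L^2(D_i)}^2\Big)^{1/2},
  \qquad b:=\sqrt{\eta}\,\|\Bop\bm v\|_{Z}.
\]
Squaring gives
\[
  \|\bm v-\bm u^\ast\|_{\bm H^2(\mathcal D)}^2\le C_{gn}^2(a+b)^2.
\]

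Next I will use the elementary inequality $(a+b)^2\le 2(a^2+b^2)$, which follows from $2ab\le a^2+b^2$. We have $a^2=\sum_i\|\Gop_i v_i-\tilde f_i\|_{L^2(D_i)}^2$ and $b^2=\eta\|\Bop\bm v\|_Z^2$. Hence $a^2+b^2$ is exactly $\mathcal J(\bm v)$ as defined in the statement, and this yields the claimed bound $2C_{gn}^2\mathcal J(\bm v)$.

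There is essentially no obstacle here, since the substantive content (lifting, gluing, elliptic regularity and the inverse estimate) already lives in Corollary~\ref{cor:graphA-atlas-Z}. The only point to check is that $\bm v$ lies in the class where the corollary applies. The statement restricts to $\Pi_\diam\bm{\mathcal N}^{par}_{\bm M,C_N}$, so $\bm v$ is mean-zero, and the inverse estimate of Assumption~\ref{ass:inv-atlas} is still available after projection. The latter holds because constants have zero mismatch: we get $\Bop(\Pi_\diam\bm v)=\Bop\bm v$ with $\bm v\in\bm{\mathcal N}^{par}_{\bm M,C_N}$.
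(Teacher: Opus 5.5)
Your proof is correct and is exactly the paper's argument: you square the estimate \eqref{eq:graphA-atlas-Z} from Corollary~\ref{cor:graphA-atlas-Z}, apply $(a+b)^2\le 2(a^2+b^2)$, and note that $a^2+b^2=\mathcal J(\bm v)$. Your extra remark that $\Bop(\Pi_\diam\bm v)=\Bop\bm v$, so the inverse estimate survives the projection, is a valid check that the corollary applies on this class, though it is not needed because the lemma simply assumes the corollary.
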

\begin{proof}
Apply \eqref{eq:graphA-atlas-Z} and use $(a+b)^2\le 2(a^2+b^2)$.
\end{proof}

\medskip
\noindent\textbf{Step 2: error decomposition.}

\begin{theorem}[Error decomposition]\label{thm:decompA-atlas}
Let $\bm u_{\bm M}$ be defined by \eqref{eq:uM-A-atlas} and let
\[
  \bm u_a \in \arg\min_{\bm v\in \Pi_\diam\bm{\mathcal N}^{par}_{\bm M,C_N}}
  \|\bm v-\bm u^\ast\|_{\bm H^2(\mathcal D)}
\]
be the best broken-$H^2$ approximation in the trial class.
Assume Lemma~\ref{lem:upper-graph-atlas} and Corollary~\ref{cor:graphA-atlas-Z}. Then
\begin{align}
  \|\bm u_{\bm M}-\bm u^\ast\|_{\bm H^2(\mathcal D)}^2
  &\le
  2 C_{gn}^2 C_U\, \|\bm u_a-\bm u^\ast\|_{\bm H^2(\mathcal D)}^2
  + 4 C_{gn}^2 \sup_{\bm v\in \Pi_\diam\bm{\mathcal N}^{par}_{\bm M,C_N}}
  \big|\mathcal J(\bm v)-\widehat{\mathcal J}(\bm v)\big|
  \label{eq:decompA-atlas}\\
  &\quad
  + 2 C_{gn}^2\Big(\widehat{\mathcal J}(\bm u_{\bm M})-\widehat{\mathcal J}(\bm u_a)\Big).
  \nonumber
\end{align}
If the least-squares problem is solved exactly, then
$\widehat{\mathcal J}(\bm u_{\bm M})\le \widehat{\mathcal J}(\bm u_a)$ and the last term is non-positive.
\end{theorem}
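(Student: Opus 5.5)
We follow the standard oracle-inequality argument and chain three previously established facts: the residual-to-error bound of Lemma~\ref{lem:err-vs-lossA-atlas}, the upper bound of Lemma~\ref{lem:upper-graph-atlas}, and the elementary splitting of $\mathcal J$ into $\widehat{\mathcal J}$ plus a uniform deviation. The starting point is Lemma~\ref{lem:err-vs-lossA-atlas} applied to $\bm v=\bm u_{\bm M}$, which lies in $\Pi_\diam\bm{\mathcal N}^{par}_{\bm M,C_N}$ by \eqref{eq:uM-A-atlas}:
\[
  \|\bm u_{\bm M}-\bm u^\ast\|_{\bm H^2(\mathcal D)}^2\le 2C_{gn}^2\,\mathcal J(\bm u_{\bm M}).
\]

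Next we insert the empirical loss and the comparison element $\bm u_a$ via the telescoping identity
\[
  \mathcal J(\bm u_{\bm M})
  =\big[\mathcal J(\bm u_{\bm M})-\widehat{\mathcal J}(\bm u_{\bm M})\big]
  +\big[\widehat{\mathcal J}(\bm u_{\bm M})-\widehat{\mathcal J}(\bm u_a)\big]
  +\big[\widehat{\mathcal J}(\bm u_a)-\mathcal J(\bm u_a)\big]
  +\mathcal J(\bm u_a).
\]
Both $\bm u_{\bm M}$ and $\bm u_a$ belong to $\Pi_\diam\bm{\mathcal N}^{par}_{\bm M,C_N}$. Hence the first and third brackets are each bounded by $\sup_{\bm v}|\mathcal J(\bm v)-\widehat{\mathcal J}(\bm v)|$. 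Together they contribute $2\sup|\cdot|$, which after multiplication by $2C_{gn}^2$ gives the factor $4C_{gn}^2$. The second bracket is the optimization term and is kept as is.

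It remains to bound $\mathcal J(\bm u_a)$ by the approximation error. Here we use that the exact pullbacks satisfy $\Gop_i\tilde u_i^\ast=\tilde f_i$ and $\Bop\bm u^\ast=0$, by Corollary~\ref{cor:graphA-atlas-Z}. By linearity of $\Gop_i$ and $\Bop$, this gives
\[
  \mathcal J(\bm u_a)=\sum_i\|\Gop_i(u_{a,i}-\tilde u_i^\ast)\|_{L^2(D_i)}^2+\eta\|\Bop(\bm u_a-\bm u^\ast)\|_Z^2.
\]
Lemma~\ref{lem:upper-graph-atlas}, applied to $\bm u_a-\bm u^\ast\in\bm H^2(\mathcal D)$, then yields $\mathcal J(\bm u_a)\le C_U\|\bm u_a-\bm u^\ast\|_{\bm H^2(\mathcal D)}^2$. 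Collecting the three pieces gives exactly \eqref{eq:decompA-atlas}. Finally, if the least-squares problem is solved exactly, then $\bm u_{\bm M}$ minimizes $\widehat{\mathcal J}$ over the class containing $\bm u_a$, so the last term is non-positive.

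The main obstacle is a technical one: ensuring that the minimizers exist and that the supremum is finite and measurable. $\bm u_a$ exists because the class is the image of a compact $\ell^1$-ball under a continuous finite-dimensional map, so a minimizer is attained. We also need $\bm u^\ast\in\bm H^2(\mathcal D)$, so that Lemma~\ref{lem:upper-graph-atlas} applies. This follows from $u^\ast\in H^2(\Gamma)$ and Lemma~\ref{lem:glue}. Everything else is bookkeeping of constants.
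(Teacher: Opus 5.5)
Your proposal is correct and follows the paper's proof essentially step for step. You apply Lemma~\ref{lem:err-vs-lossA-atlas} to $\bm u_{\bm M}$, add and subtract $\widehat{\mathcal J}$ at $\bm u_{\bm M}$ and $\bm u_a$ to get two uniform-deviation terms (hence $4C_{gn}^2$), and apply Lemma~\ref{lem:upper-graph-atlas} to $\bm u_a-\bm u^\ast$ using $\Gop_i\tilde u_i^\ast=\tilde f_i$ and $\Bop\bm u^\ast=0$. Your linearity expansion of $\mathcal J(\bm u_a)$ states more cleanly what the paper writes loosely as $\mathcal J(\bm u_a)=\mathcal J(\bm u_a-\bm u^\ast)$. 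One small mis-citation: $\bm u^\ast\in\bm H^2(\mathcal D)$ comes from pullback stability under the $C^k$ charts, not from Lemma~\ref{lem:glue}, which already assumes a broken $H^2$ field.
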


\begin{proof}
By Lemma~\ref{lem:err-vs-lossA-atlas},
\[
\|\bm u_{\bm M}-\bm u^\ast\|_{\bm H^2(\mathcal D)}^2
\le 2C_{gn}^2\,\mathcal J(\bm u_{\bm M}).
\]
Decompose $\mathcal J=\widehat{\mathcal J}+(\mathcal J-\widehat{\mathcal J})$ and add/subtract
$\widehat{\mathcal J}(\bm u_a)$ to obtain
\begin{align*}
\mathcal J(\bm u_{\bm M})
&= \widehat{\mathcal J}(\bm u_{\bm M})+(\mathcal J-\widehat{\mathcal J})(\bm u_{\bm M})\\
&\le \widehat{\mathcal J}(\bm u_a)
   +\big(\widehat{\mathcal J}(\bm u_{\bm M})-\widehat{\mathcal J}(\bm u_a)\big)
   + \sup_{\bm v\in\Pi_\diam\bm{\mathcal N}^{par}_{\bm M,C_N}}|\mathcal J(\bm v)-\widehat{\mathcal J}(\bm v)|.
\end{align*}
Moreover,
\[
\widehat{\mathcal J}(\bm u_a)
\le \mathcal J(\bm u_a)+\sup_{\bm v\in\Pi_\diam\bm{\mathcal N}^{par}_{\bm M,C_N}}|\mathcal J(\bm v)-\widehat{\mathcal J}(\bm v)|.
\]
Since $\mathcal J(\bm u^\ast)=0$, we have $\mathcal J(\bm u_a)=\mathcal J(\bm u_a-\bm u^\ast)$, and
Lemma~\ref{lem:upper-graph-atlas} yields
$\mathcal J(\bm u_a)\le C_U\|\bm u_a-\bm u^\ast\|_{\bm H^2(\mathcal D)}^2$.
Combining the above bounds gives \eqref{eq:decompA-atlas}.
\end{proof}

\medskip
\noindent\textbf{Step 3: approximation and statistical bounds.}

\begin{assumption}[Activation and parameter boundedness]\label{ass:rhoA-atlas}
Let $\rho\in C^3(\mathbb R)$. On each patch $D_i\subset\mathbb R^2$, the random feature family is
\[
  \psi_{i,m}(\bxi)=\rho(w_{i,m}\cdot\bxi+b_{i,m}),\qquad m=1,\dots,M_i.
\]
For a given trial space (with total width $M_{\rm tot}$), assume the parameters are almost surely bounded:
\[
  \|w_{i,m}\|_\infty\le r_{M_{\rm tot}},\qquad |b_{i,m}|\le b_{M_{\rm tot}}.
\]
Moreover, $\rho,\rho',\rho'',\rho'''$ are bounded on every bounded interval.
\end{assumption}

\begin{lemma}[Uniform feature bounds on an atlas]\label{lem:featA-atlas}
Assume Lemma~\ref{lem:unif-ellip-atlas} and Assumption~\ref{ass:rhoA-atlas}.
Assume moreover that the interface extensions $\Phi_{ji}^e:\mathcal N_{i,e}\to\mathcal N_{j,e}$
exist as introduced earlier (with $k\ge 2$, hence $\Phi_{ji}^e\in C^1$), and define
\[
  L_\Phi:=\max_{e\in\mathcal E}\max\big\{\|D\Phi_{ji}^e\|_{L^\infty(\mathcal N_{i,e})},\ \|D\Phi_{ij}^e\|_{L^\infty(\mathcal N_{j,e})}\big\}<\infty.
\]
Then there exists a constant $C_{\mathrm{feat}}>0$ depending only on
\[
  r_{M_{\rm tot}},\ b_{M_{\rm tot}},\ \rho,\ \text{the atlas (in particular $\{D_i\}$ and $L_\Phi$)},\
  \text{and coefficient bounds of }\{\Gop_i\}_{i=1}^J,
\]
such that for any $\bm v\in \bm{\mathcal N}^{par}_{\bm M,C_N}$ with patchwise expansions
$v_i=\sum_{m=1}^{M_i}c_{i,m}\psi_{i,m}$ and $\|c_i\|_{\ell^1}\le C_N$,
\[
  \sup_{\bxi\in D_i}|\Gop_i v_i(\bxi)|\le C_N\,C_{\mathrm{feat}},\qquad
  \sup_{e\in\mathcal E}\sup_{\bxi\in \partial_e D_{i(e)}}\|\Bop_e\bm v(\bxi)\|_{\R^2}\le 2C_N\,C_{\mathrm{feat}}.
\]
\end{lemma}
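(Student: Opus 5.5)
The bound is established at the level of a single feature and then extended to $v_i$ by linearity and the $\ell^1$ constraint on the coefficients. The feature $\psi_{i,m}(\bxi)=\rho(w_{i,m}\cdot\bxi+b_{i,m})$ has derivatives
\[
\partial_\alpha\psi_{i,m}=w_{i,m,\alpha}\,\rho'(\cdot),\qquad
\partial_{\alpha\beta}\psi_{i,m}=w_{i,m,\alpha}w_{i,m,\beta}\,\rho''(\cdot).
\]
Since $D_i$ is bounded (so $\overline{D_i}$ is compact), the argument satisfies
\[
|w_{i,m}\cdot\bxi+b_{i,m}|\le 2r_{M_{\rm tot}}\sup_{\bxi\in \overline{D_i}}|\bxi|+b_{M_{\rm tot}}=:T_i .
\]
Assumption~\ref{ass:rhoA-atlas} then gives, for $\ell\le 2$,
\[
\sup_{D_i}|\partial^\ell\psi_{i,m}|\le \max(1,r_{M_{\rm tot}})^{2}\,K_\rho,
\qquad K_\rho:=\max_{\ell\le 2}\sup_{|t|\le T}|\rho^{(\ell)}(t)|,\quad T:=\max_i T_i .
\]
Lemma~\ref{lem:unif-ellip-atlas} applies because $k\ge 2$, so $g_i^{\alpha\beta}\in C^1(\overline{D_i})$ and $\sqrt{g_i}\in C^1(\overline{D_i})$ is bounded below. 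Writing $\Gop_i$ in the non-divergence form used in Lemma~\ref{lem:upper-graph-atlas}, the coefficients $g_i^{\alpha\beta}$ and $b_i^\beta$ are bounded by a constant $C_{\rm coef}$. Hence
\[
\sup_{D_i}|\Gop_i\psi_{i,m}|\le 6\,C_{\rm coef}\max(1,r)^2K_\rho .
\]
The triangle inequality with $\|c_i\|_1\le C_N$ then yields the first bound.

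For the interface bound, fix $e$ shared by $(i,j)$ and let $\bxi\in\partial_e D_{i(e)}$. The value mismatch satisfies
\[
|\delta_e^0(\bm v)(\bxi)|\le |v_i(\bxi)|+|v_j(\Phi_{ji}^e(\bxi))|\le 2C_N K_\rho .
\]
Here $\Phi_{ji}^e(\bxi)\in\partial_e D_j\subset\overline{D_j}$, so the sup over $\overline{D_j}$ applies.

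For the normal-derivative mismatch, the chain rule gives
\[
\partial_{\nu}(v_j\circ\Phi_{ji}^e)(\bxi)=\nabla v_j(\Phi_{ji}^e(\bxi))\cdot D\Phi_{ji}^e(\bxi)\,\nu .
\]
Its modulus is at most $\sqrt2\,L_\Phi\,C_N\,\sqrt2\,r\,K_\rho$, and $|\partial_\nu v_i|\le C_N\sqrt2\,r K_\rho$. Hence $|\delta_e^n|$ is bounded by $C_N$ times a constant depending on $r,b,\rho,L_\Phi$.

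Set $C_{\rm feat}$ to be the maximum of all these constants. Then $\|\Bop_e\bm v(\bxi)\|_{\R^2}\le |\delta^0_e|+|\delta^n_e|\le 2C_NC_{\rm feat}$, after absorbing constants. The factor $2$ reflects the two-sided contributions from patches $i$ and $j$.

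The main subtlety is evaluating $v_j$ and its gradient at $\Phi_{ji}^e(\bxi)$. This requires the feature bounds to hold on a set containing $\partial_e D_j$, i.e.\ on $\overline{D_j}$, so we take sups over closures. This is legitimate because $\rho$ and its derivatives are continuous and the preactivation stays in the compact interval $[-T,T]$. Taking the closures and the extended neighborhoods into account when defining $T$ handles this. The third derivative $\rho'''$ is not needed here; it is only relevant for later Lipschitz/statistical estimates.
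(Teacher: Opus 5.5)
Your proposal is correct and follows the paper's proof. Both arguments run the same way:
\begin{itemize}
\item confine the preactivation to a fixed interval $[-T,T]$;
\item bound $\psi_{i,m}$ and its first and second derivatives by powers of $r_{M_{\rm tot}}$ times suprema of $\rho,\rho',\rho''$;
\item pass to $\Gop_i v_i$ through the non-divergence form with bounded $g_i^{\alpha\beta},b_i^\beta$ and the $\ell^1$ constraint;
\item bound $\delta_e^0$ directly and $\delta_e^n$ via the chain rule with $L_\Phi$, enlarging the constant at the end.
\end{itemize}
Your explicit remark that the suprema must be taken over $\overline{D_j}$, since $\Phi_{ji}^e(\bxi)\in\partial_e D_j$, is a point of care the paper leaves implicit when it bounds $|v_j\circ\Phi_{ji}^e|$ by $\sup_{D_j}|v_j|$.
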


\begin{proof}
\textbf{Step 0: a uniform bound for the feature arguments.}
Since the atlas is finite and each $D_i\subset\mathbb R^2$ is bounded, define
\[
  R:=\max_{1\le i\le J}\ \sup_{\bxi\in D_i} |\bxi|_1 <\infty .
\]
For any feature $\psi_{i,m}(\bxi)=\rho(w_{i,m}\cdot\bxi+b_{i,m})$ and any $\bxi\in D_i$,
\[
  |w_{i,m}\cdot\bxi+b_{i,m}|
  \le \|w_{i,m}\|_\infty\,|\bxi|_1 + |b_{i,m}|
  \le r_{M_{\rm tot}}\,R + b_{M_{\rm tot}}=:T .
\]
Hence all arguments of $\rho$ remain in the fixed compact interval $[-T,T]$.

\smallskip
\textbf{Step 1: uniform $L^\infty$ bounds for $\psi_{i,m}$ and its derivatives on each patch.}
Let
\[
  K_0:=\sup_{|t|\le T}|\rho(t)|,\qquad
  K_1:=\sup_{|t|\le T}|\rho'(t)|,\qquad
  K_2:=\sup_{|t|\le T}|\rho''(t)|,
\]
which are finite by Assumption~\ref{ass:rhoA-atlas}.
With $t(\bxi):=w_{i,m}\cdot\bxi+b_{i,m}$, the chain rule gives
\[
  \partial_{\xi_\alpha}\psi_{i,m}(\bxi)=\rho'(t(\bxi))\,w_{i,m,\alpha},
  \qquad
  \partial_{\xi_\alpha\xi_\beta}\psi_{i,m}(\bxi)=\rho''(t(\bxi))\,w_{i,m,\alpha}w_{i,m,\beta}.
\]
Using $|w_{i,m,\alpha}|\le \|w_{i,m}\|_\infty\le r_{M_{\rm tot}}$, we obtain
\[
  \sup_{D_i}|\psi_{i,m}|\le K_0,\qquad
  \sup_{D_i}|\partial_{\xi_\alpha}\psi_{i,m}|\le K_1\,r_{M_{\rm tot}},\qquad
  \sup_{D_i}|\partial_{\xi_\alpha\xi_\beta}\psi_{i,m}|\le K_2\,r_{M_{\rm tot}}^2 .
\]

\smallskip
\textbf{Step 2: uniform $L^\infty$ bound for $\Gop_i\psi_{i,m}$.}
Expanding $\Gop_i$ in non-divergence form,
\[
  \Gop_i v = -g_i^{\alpha\beta}\,\partial_{\xi_\alpha\xi_\beta}v - b_i^\beta\,\partial_{\xi_\beta}v,
  \qquad
  b_i^\beta:=\frac{1}{\sqrt{g_i}}\partial_{\xi_\alpha}\big(\sqrt{g_i}\,g_i^{\alpha\beta}\big),
\]
Lemma~\ref{lem:unif-ellip-atlas} yields bounded coefficients on $\overline{D_i}$.
Let
\[
  A:=\max_{i}\max_{\alpha,\beta}\|g_i^{\alpha\beta}\|_{L^\infty(D_i)},\qquad
  B:=\max_{i}\max_{\beta}\|b_i^\beta\|_{L^\infty(D_i)}.
\]
Then Step~1 gives
\[
  \sup_{D_i}|\Gop_i\psi_{i,m}|
  \le A\sum_{\alpha,\beta}\sup_{D_i}|\partial_{\xi_\alpha\xi_\beta}\psi_{i,m}|
      +B\sum_{\beta}\sup_{D_i}|\partial_{\xi_\beta}\psi_{i,m}|
  \le C^{(1)}\big(r_{M_{\rm tot}}^2+r_{M_{\rm tot}}\big),
\]
where $C^{(1)}$ depends only on $A,B,K_1,K_2$ (hence on the atlas and $\rho$).

\smallskip
\textbf{Step 3: extend the bound to a trial function $v_i=\sum_m c_{i,m}\psi_{i,m}$.}
By linearity,
\[
  \Gop_i v_i=\sum_{m=1}^{M_i}c_{i,m}\,\Gop_i\psi_{i,m},
\]
hence
\[
  \sup_{D_i}|\Gop_i v_i|
  \le \|c_i\|_{\ell^1}\ \sup_{m}\sup_{D_i}|\Gop_i\psi_{i,m}|
  \le C_N\,C^{(2)},
\]
where $C^{(2)}:=\sup_{i,m}\sup_{D_i}|\Gop_i\psi_{i,m}|$ is controlled by Step~2.

\smallskip
\textbf{Step 4: uniform bound for the interface mismatch $\Bop_e\bm v=(\delta_e^0,\delta_e^n)$.}
Fix an interface $e=\Gamma_i\cap\Gamma_j$ and its chosen side $i(e)=i$ so that $\Bop_e\bm v$ is evaluated on $\partial_e D_i$.
For the value mismatch,
\[
  \sup_{\partial_e D_i}|v_i|
  \le \sum_m |c_{i,m}|\sup_{D_i}|\psi_{i,m}|
  \le C_N K_0,
  \qquad
  \sup_{\partial_e D_i}|v_j\circ \Phi_{ji}^e|
  \le \sup_{D_j}|v_j|\le C_N K_0,
\]
so
\[
  \sup_{\partial_e D_i}|\delta_e^0(\bm v)|\le 2C_NK_0.
\]

For the normal-derivative mismatch, let $\nu_{i,e}(\bxi)$ be the outward unit normal to $\partial_e D_i$.
Using $\partial_{\nu_{i,e}}v_i=\nu_{i,e}\cdot\nabla v_i$ and the trace bounds from Step~1,
\[
  \sup_{\partial_e D_i}|\partial_{\nu_{i,e}} v_i|
  \le \sup_{\partial_e D_i}|\nabla v_i|
  \le \sum_m |c_{i,m}|\sup_{D_i}|\nabla \psi_{i,m}|
  \le C_N\,(2K_1 r_{M_{\rm tot}}).
\]
Moreover, by the chain rule,
\[
  \partial_{\nu_{i,e}}\big(v_j\circ \Phi_{ji}^e\big)(\bxi)
  = \nabla v_j(\Phi_{ji}^e(\bxi))\cdot D\Phi_{ji}^e(\bxi)\,\nu_{i,e}(\bxi),
\]
hence
\[
  \sup_{\partial_e D_i}\Big|\partial_{\nu_{i,e}}\big(v_j\circ \Phi_{ji}^e\big)\Big|
  \le \sup_{D_j}|\nabla v_j|\ \|D\Phi_{ji}^e\|_{L^\infty(\mathcal N_{i,e})}
  \le C_N\,(2K_1 r_{M_{\rm tot}})\,L_\Phi .
\]
Therefore
\[
  \sup_{\partial_e D_i}|\delta_e^n(\bm v)|
  \le C_N\,(2K_1 r_{M_{\rm tot}})\,(1+L_\Phi).
\]

Combining the bounds for $(\delta_e^0,\delta_e^n)$ and enlarging constants if necessary yields
\[
  \sup_{\bxi\in \partial_e D_i}\|\Bop_e\bm v(\bxi)\|_{\R^2}
  \le 2C_N\,C_{\mathrm{feat}},
\]
with $C_{\mathrm{feat}}$ depending only on $r_{M_{\rm tot}},b_{M_{\rm tot}},\rho$, the atlas (through $R$ and $L_\Phi$),
and coefficient bounds of $\{\Gop_i\}$ (through $A,B$).
\end{proof}

\begin{theorem}[Patchwise Sobolev approximation on an atlas]\label{thm:approxA-atlas}
Assume Assumption~\ref{ass:rhoA-atlas}. 
Assume further that the exact mean-zero solution satisfies
$u^\ast\in H^{\frac52+\varepsilon}(\Gamma)$ for some $\varepsilon>0$.
Then for any $\epsilon_A>0$ and $\delta\in(0,1)$ there exist a coefficient radius $C_N>0$
and widths $\bm M=(M_1,\dots,M_J)$ such that, with probability at least $1-\delta$
over the random draw of all patchwise features,
\[
  \inf_{\bm v\in \Pi_\diam\bm{\mathcal N}^{par}_{\bm M,C_N}}
  \|\bm v-\bm u^\ast\|_{\bm H^2(\mathcal D)}
  \le \epsilon_A .
\]
\end{theorem}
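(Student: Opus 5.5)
The plan is to reduce the statement to a single-patch random-feature approximation result on each $D_i$, and then handle the global mean-zero projector $\Pi_\diam$ at the end.

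\textbf{Step 1: extend the target on each patch.} Since $X_i$ is the restriction of a $C^k$ chart $\widetilde X_i$ on $\widetilde D_i\supset\overline{D_i}$, the pullback $\tilde u_i^\ast=u^\ast\circ X_i$ lies in $H^{5/2+\varepsilon}(D_i)$ (taking $k$ large enough, say $k\ge 3$), with norm controlled by $\|u^\ast\|_{H^{5/2+\varepsilon}(\Gamma)}$. I would apply a Stein extension to obtain $U_i\in H^{5/2+\varepsilon}(\mathbb R^2)$ with compact support and $U_i|_{D_i}=\tilde u_i^\ast$. The exponent $5/2+\varepsilon$ is chosen so that $\int(1+|\omega|)^{2}|\widehat U_i(\omega)|\,d\omega<\infty$: by Cauchy--Schwarz this integral is bounded by $\|U_i\|_{H^{s}}$ times $\big(\int(1+|\omega|)^{4-2s}d\omega\big)^{1/2}$, which is finite in $\mathbb R^2$ when $s>3$. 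This suggests the stated exponent is tuned for an $L^2$-type Barron condition, or for a slightly different weight. In any case I would use the Fourier/Barron-type integral representation
\[
U_i(\bxi)=\int \rho(w\cdot\bxi+b)\,a_i(w,b)\,\dd\mu(w,b)
\]
for $\bxi$ in a neighbourhood of $\overline{D_i}$, up to an approximation error that can be made smaller than $\epsilon_A/3$ in $H^2(D_i)$. This holds for non-polynomial $\rho\in C^3$, with the weight $a_i$ bounded and supported on a box $|w|_\infty\le r$, $|b|\le b_{\max}$, where $r$ is large enough to capture the Fourier tail. That is why the bandwidth $r_{M_{\rm tot}}$ in Assumption~\ref{ass:rhoA-atlas} is allowed to grow.

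\textbf{Step 2: Monte Carlo sampling of the features.} With the i.i.d. uniform features on this box, I would set $c_{i,m}=a_i(w_{i,m},b_{i,m})\,|\text{box}|/M_i$. Then $v_i=\sum_m c_{i,m}\psi_{i,m}$ is an unbiased estimator of the truncated integral, and $\|c_i\|_1\le|\text{box}|\,\|a_i\|_\infty=:C_N$ deterministically. The variance in $H^2(D_i)$ is bounded by $M_i^{-1}\,\mathbb E\|a\,\psi\|_{H^2(D_i)}^2$. Using the feature derivative bounds from Step~1 of Lemma~\ref{lem:featA-atlas} ($K_0,K_1r,K_2r^2$), this is at most $C(r)\,M_i^{-1}$. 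Markov's inequality and a union bound over the $J$ patches then show that, with probability at least $1-\delta$, $\|v_i-\tilde u_i^\ast\|_{H^2(D_i)}\le \epsilon_A/(3\sqrt J)$ once $M_i\gtrsim C(r)J/(\delta\epsilon_A^2)$.

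\textbf{Step 3: apply the projector.} Since $\int_\Gamma u^\ast\,\dd s=0$, we have $\Pi_\diam\bm u^\ast=\bm u^\ast$. The map $\Pi_\diam$ subtracts a single constant $c(\bm v)$, with $|c(\bm v-\bm u^\ast)|\le C\|\bm v-\bm u^\ast\|_{\bm L^2}$ by Cauchy--Schwarz with the weights $\sqrt{g_i}$. Hence
\[
\|\Pi_\diam\bm v-\bm u^\ast\|_{\bm H^2}\le(1+C_\Pi)\|\bm v-\bm u^\ast\|_{\bm H^2},
\]
and I would rescale $\epsilon_A$ accordingly.

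\textbf{Main obstacle.} The hard step is Step~1: obtaining an integral representation of $U_i$ over a bounded parameter box, with a bounded density, for a general non-polynomial $\rho\in C^3$, with $H^2$ control. If necessary, I would first approximate $U_i$ in $H^2$ by a band-limited mollification, which is possible because $U_i\in H^{5/2+\varepsilon}\subset H^2$. I would then represent each Fourier mode $e^{i\omega\cdot\bxi}$ on a bounded interval via a ridge-function expansion in $\rho$, which is available because $\rho$ is non-polynomial (Leshno--Pinkus type density in $C^2$ on compacts). This absorbs everything into a finite measure on a compact box. The resulting constants $C_N$ and $r$ depend on $\epsilon_A$, which the statement allows.
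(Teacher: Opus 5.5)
Your skeleton matches the paper's proof. You pull back $u^\ast$ to each $D_i$, obtain a per-patch random-feature approximant with failure probability $\delta/J$, take a union bound, and control $\Pi_\diam$ via $c(\bm u^\ast)=0$ and Cauchy--Schwarz. The paper fixes $\epsilon_i=\epsilon_A/(C_\Pi\sqrt J)$ in advance, which is equivalent to your final rescaling.

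The difference is that the paper does not prove the per-patch step. It applies the randomized Sobolev approximation theorem \cite[Thm.~4.12]{AGRANN2025} with $(d,s)=(2,2)$ to $\tilde u_i^\ast\in H^{5/2+\varepsilon}(D_i)$, and that is the only place the extra regularity is used. You try to derive this step yourself, and the derivation breaks exactly where you suspected.

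You are right that a Barron-type representation with bounded density does not follow from $H^{5/2+\varepsilon}$ in $\mathbb R^2$ with an $H^2$ target. Your fallback, Leshno--Pinkus density in $C^2$, produces a \emph{finite} sum $\sum_{k=1}^K a_k\rho(w_k\cdot\bxi+b_k)$, i.e.\ an atomic measure on parameter space. Routing it through Fourier modes via a finite ridge expansion of $t\mapsto e^{it}$ still leaves only finitely many bias values. Such a measure has no density with respect to the uniform law on the box, and uniform samples a.s.\ miss the atoms. So the estimator $c_{i,m}=a_i(w_{i,m},b_{i,m})|\text{box}|/M_i$ in your Step~2 is not defined, and neither is the deterministic bound $\|c_i\|_1\le|\text{box}|\,\|a_i\|_\infty$. ``A finite measure on a compact box'' is not what Monte Carlo sampling needs; you need absolute continuity with a bounded Radon--Nikodym derivative.

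The missing step is short. Write $\psi_\theta(\bxi):=\rho(w\cdot\bxi+b)$ with $\theta=(w,b)\in\mathbb R^3$. Because $\rho,\rho',\rho''$ are uniformly continuous on bounded intervals, $\theta\mapsto\psi_\theta$ is continuous into $C^2(\overline{D_i})$. Now replace each atom $a_k\delta_{\theta_k}$ by $a_k$ times the normalized uniform measure on a cube $Q_\eta(\theta_k)$ of side $\eta$. This changes the function by at most $\sum_k|a_k|\sup_{\theta\in Q_\eta(\theta_k)}\|\psi_\theta-\psi_{\theta_k}\|_{C^2(\overline{D_i})}\to 0$, hence the change also vanishes in $H^2(D_i)$. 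It yields a density bounded by $\eta^{-3}\sum_k|a_k|$, after which your variance/Markov argument goes through. Note that this argument needs $M_i\gtrsim C(r)J^2/(\delta\epsilon_A^2)$, not $J$.

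Alternatively, skip Monte Carlo. For $M_i$ large, with high probability each $Q_\eta(\theta_k)$ contains a sampled parameter. Keeping one such feature per atom with coefficient $a_k$ gives $\|c_i\|_1\le\sum_k|a_k|$. In both versions the sampling box (bandwidth and bias range) must be chosen depending on $\epsilon_A$, large enough to contain all the cubes $Q_\eta(\theta_k)$, which Assumption~\ref{ass:rhoA-atlas} permits.

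Once patched, your route is self-contained and in fact needs only $u^\ast\in H^2(\Gamma)$, at the price of being purely qualitative. The paper's route is a one-line citation but inherits whatever hypotheses the cited theorem imposes.
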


\begin{proof}
\noindent\textbf{Step 1: pull back the target to the parameter charts.}
Let $\tilde u_i^\ast:=u^\ast\circ X_i$ on $D_i$.
Since $u^\ast\in H^{\frac52+\varepsilon}(\Gamma)$ and the atlas maps $X_i$ are $C^k$ diffeomorphisms
on each patch (with $k$ large enough so that composition is stable at order $s=\frac52+\varepsilon$),
the pullback is bounded on each chart:
\[
  \tilde u_i^\ast \in H^{\frac52+\varepsilon}(D_i),
  \qquad
  \|\tilde u_i^\ast\|_{H^{\frac52+\varepsilon}(D_i)}\le C_{X,i}\,\|u^\ast\|_{H^{\frac52+\varepsilon}(\Gamma_i)}.
\]
Here $C_{X,i}$ depends only on the atlas (chart regularity and Jacobian bounds).

\medskip
\noindent\textbf{Step 2: randomized Sobolev approximation on each patch.}
Fix $\epsilon_A>0$ and $\delta\in(0,1)$.
Set target accuracies and failure probabilities
\[
  \epsilon_i := \frac{\epsilon_A}{C_\Pi\sqrt{J}},
  \qquad
  \delta_i := \frac{\delta}{J},
\]
where $C_\Pi\ge 1$ is the stability constant of the mean-zero projection $\Pi_\diam$
(see Step~4 below).
Apply the patchwise randomized Sobolev approximation theorem
(e.g., \cite[Thm.~4.12]{AGRANN2025} with $(d,s)=(2,2)$) on the bounded domain $D_i$ to the function
$\tilde u_i^\ast\in H^{\frac52+\varepsilon}(D_i)$.
Under Assumption~\ref{ass:rhoA-atlas} (activation regularity on bounded intervals and bounded parameters),
this theorem yields: there exist a width $M_i$ and a coefficient radius $C_{N,i}$ such that, with probability
at least $1-\delta_i$ over the random draw of the $M_i$ features on patch $i$, one can find
\[
  v_i \in \mathcal N^{par}_{M_i,C_{N,i}}(D_i)
  \quad\text{with}\quad
  \|v_i-\tilde u_i^\ast\|_{H^2(D_i)} \le \epsilon_i .
\]
Define $C_N:=\max_{1\le i\le J} C_{N,i}$ and enlarge radii if needed, so that all constructed $v_i$ belong to
$\mathcal N^{par}_{M_i,C_N}(D_i)$ simultaneously.

\medskip
\noindent\textbf{Step 3: combine patches by a union bound.}
Let $\mathcal E_i$ be the event that the above patchwise approximation succeeds on patch $i$.
Then $\mathbb P(\mathcal E_i)\ge 1-\delta_i$ and hence
\[
  \mathbb P\Big(\bigcap_{i=1}^J \mathcal E_i\Big)
  \ge 1-\sum_{i=1}^J \delta_i
  =1-\delta .
\]
On the intersection event, define the broken function $\bm v:=(v_1,\dots,v_J)\in \bm{\mathcal N}^{par}_{\bm M,C_N}$.
By definition of the broken norm,
\[
  \|\bm v-\bm u^\ast\|_{\bm H^2(\mathcal D)}^2
  =\sum_{i=1}^J \|v_i-\tilde u_i^\ast\|_{H^2(D_i)}^2
  \le \sum_{i=1}^J \epsilon_i^2
  = \frac{\epsilon_A^2}{C_\Pi^2}.
\]

\medskip
\noindent\textbf{Step 4: enforce mean-zero via $\Pi_\diam$ without losing accuracy.}
Recall that $\Pi_\diam$ subtracts a global constant to enforce $\int_\Gamma(\Pi_\diam \bm v)\,ds=0$:
for $\bm v\in \bm H^2(\mathcal D)$ set
\[
  (\Pi_\diam \bm v)_i := v_i - c(\bm v),
  \qquad
  c(\bm v):=|\Gamma|^{-1}\int_\Gamma v\,ds,
\]
where $v$ denotes the patchwise surface function induced by $\bm v$ (well-defined a.e.\ on each patch).
Since $u^\ast$ is mean-zero, $c(\bm u^\ast)=0$ and thus
\[
  \Pi_\diam \bm v - \bm u^\ast = (\bm v-\bm u^\ast) - c(\bm v)\,(1,\dots,1).
\]
By Cauchy--Schwarz,
\[
  |c(\bm v)|
  = |\Gamma|^{-1}\Big|\int_\Gamma (v-u^\ast)\,ds\Big|
  \le |\Gamma|^{-1/2}\,\|v-u^\ast\|_{L^2(\Gamma)}
  \le C\,\|\bm v-\bm u^\ast\|_{\bm H^2(\mathcal D)},
\]
and constants have finite $\bm H^2(\mathcal D)$ norm. Therefore $\Pi_\diam$ is bounded on $\bm H^2(\mathcal D)$:
\[
  \|\Pi_\diam \bm v-\bm u^\ast\|_{\bm H^2(\mathcal D)}
  \le C_\Pi\,\|\bm v-\bm u^\ast\|_{\bm H^2(\mathcal D)}
\]
for an atlas-dependent constant $C_\Pi\ge 1$.
Combining with Step~3 yields
\[
  \|\Pi_\diam \bm v-\bm u^\ast\|_{\bm H^2(\mathcal D)}\le \epsilon_A
\]
on the same event of probability at least $1-\delta$.
This proves the claim.
\end{proof}

\begin{theorem}[Uniform statistical bound on an atlas]\label{thm:statA-atlas}
Assume Lemma~\ref{lem:featA-atlas} and $\tilde f_i\in L^\infty(D_i)$ for all $i$.
Let $\eta>0$ and $\delta\in(0,1)$. Set $M_{\rm tot}:=\sum_{i=1}^J M_i$.
For each patch $i$, draw $\{\bxi_{i,n}\}_{n=1}^{N_i}\overset{\rm i.i.d.}{\sim}U(D_i)$.
For each interface $e\in\mathcal E$, draw $\{\bxi_{e,n}\}_{n=1}^{N_e}\overset{\rm i.i.d.}{\sim}U(\partial_e D)$ (with respect to $\dd l$).
Then there exist absolute constants $c_0,C>0$ such that, with probability at least $1-\delta$
(over the random collocation points),
\begin{align*}
  &\sup_{\bm v\in \Pi_\diam\bm{\mathcal N}^{par}_{\bm M,C_N}}
  \big|\mathcal J(\bm v)-\widehat{\mathcal J}(\bm v)\big|
  \le \\
 & C\, C_N^2 \Big(C_{\mathrm{feat}}+\max_i\|\tilde f_i\|_{L^\infty(D_i)}\Big)^2
  \left(
     \sum_{i=1}^J M_i\,\sqrt{\frac{\log(c_0 M_{\mathrm{tot}}/\delta)}{N_i}}
     + \eta\sum_{e\in\mathcal E} M_e\,\sqrt{\frac{\log(c_0 M_{\mathrm{tot}}/\delta)}{N_e}}
  \right),
\end{align*}
where $\mathcal J(\bm v)=\sum_{i=1}^J \|\Gop_i v_i-\tilde f_i\|_{L^2(D_i)}^2 + \eta \|\Bop \bm v\|_{Z}^2$,
$\widehat{\mathcal J}$ is its empirical counterpart, and $M_e:=M_{i(e)}+M_{j(e)}$.
\end{theorem}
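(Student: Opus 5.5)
The argument reduces the supremum over the infinite class $\Pi_\diam\bm{\mathcal N}^{par}_{\bm M,C_N}$ to finitely many scalar empirical-mean deviations. The device is to expand the quadratic loss in the coefficients and control the resulting Gram-type averages entrywise by Hoeffding's inequality plus a union bound. The logarithmic factor $\log(c_0M_{\rm tot}/\delta)$ and the linear factor $M_i$ (rather than $\sqrt{M_i}$) in the statement fit this entrywise strategy well.

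\emph{Step 1: reduce to the unprojected class.} Since $\Gop_i(\mathrm{const})=0$ and constants have zero interface mismatches, $\mathcal J(\Pi_\diam\bm v)=\mathcal J(\bm v)$ and $\widehat{\mathcal J}(\Pi_\diam\bm v)=\widehat{\mathcal J}(\bm v)$. Hence the supremum equals the supremum over $\bm{\mathcal N}^{par}_{\bm M,C_N}$. Both functionals split as a sum of $J$ interior terms and $|\mathcal E|$ interface terms, so the triangle inequality lets me bound each term separately.

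\emph{Step 2: the interior term on patch $i$.} I write $\Gop_iv_i-\tilde f_i=\sum_{m=0}^{M_i}a_m\phi_m$ with $\phi_m:=\Gop_i\psi_{i,m}$ for $m\ge1$, $\phi_0:=-\tilde f_i$ and $a_0:=1$. Then $\|a\|_1\le C_N+1$, and
\[
\|\Gop_iv_i-\tilde f_i\|_{L^2(D_i)}^2-\frac{|D_i|}{N_i}\sum_{n}|\Gop_iv_i-\tilde f_i|^2(\bxi_{i,n})=\sum_{m,m'}a_ma_{m'}\,\Delta_{mm'},
\]
where $\Delta_{mm'}$ is the deviation of the empirical mean of $\phi_m\phi_{m'}$ from its expectation under $U(D_i)$, scaled by $|D_i|$. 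This is bounded by $\|a\|_1^2\max_{m,m'}|\Delta_{mm'}|$.

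By Lemma~\ref{lem:featA-atlas} (its Steps~2--3 with $C_N=1$), $\sup_{D_i}|\phi_m|\le K:=C_{\mathrm{feat}}+\max_i\|\tilde f_i\|_{L^\infty}$. Each product $\phi_m\phi_{m'}$ is therefore bounded by $K^2$, and Hoeffding's inequality gives $|\Delta_{mm'}|\lesssim |D_i|K^2\sqrt{\log(2/\delta')/N_i}$ with probability at least $1-\delta'$. This uses that the features are fixed, i.e.\ I condition on the feature draw, which is independent of the collocation points. Setting $\delta'\sim\delta/(c_0M_{\rm tot}^2)$ in a union bound over all $O(M_{\rm tot}^2)$ index pairs turns the logarithm into $\log(c_0M_{\rm tot}/\delta)$ after adjusting $c_0$.

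This entrywise bound carries the factor $C_N^2K^2$ but no factor $M_i$. I could therefore state a slightly stronger bound, then weaken it to the displayed one since $M_i\ge1$. An alternative that naturally produces $M_i$ is an $\epsilon$-net argument on the $\ell^1$-ball. That route gives covering numbers of order $(3C_N/\epsilon)^{M_i}$ and a Lipschitz dependence of the loss on the coefficients, with the $M_i\log$ terms then absorbed. I will do the entrywise version and remark that it implies the claim.

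\emph{Step 3: the interface term on $e$.} I proceed identically with $\Bop_e\bm v=\sum_m c_m\Bop_e(\text{feature}_m)$, summing over the $M_e=M_{i(e)}+M_{j(e)}$ features from both adjacent patches. Here $\|c\|_1\le 2C_N$ and there is no data term. The pointwise bound $\|\Bop_e\cdot\|_{\R^2}\le 2C_{\mathrm{feat}}$ per feature again comes from Lemma~\ref{lem:featA-atlas}. The $\R^2$-inner products $\langle\Bop_e\phi_m,\Bop_e\phi_{m'}\rangle$ are bounded by $4C_{\mathrm{feat}}^2$, and Hoeffding's inequality with a union bound over the $O(M_{\rm tot}^2)$ pairs finishes this term. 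Finally I multiply by $\eta$ and sum over $i$ and $e$.

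The main obstacle is bookkeeping rather than depth. First, the union bound must cover all patches and interfaces simultaneously, which costs at most $(J+|\mathcal E|)M_{\rm tot}^2$ events and is absorbed into $c_0$. Second, the independence of the collocation points from the random features must be handled by conditioning.
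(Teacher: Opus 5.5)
Your proof is correct, but it takes a different route from the paper's. The paper bounds each residual pointwise via Lemma~\ref{lem:featA-atlas}. It then invokes, as a black box, a uniform concentration theorem for squared losses over an $M_i$-dimensional linear model with bounded features (Thm.~4.14 of \cite{AGRANN2025}). It applies that theorem again on each interface ``in vector-valued form or componentwise'' and finishes with a union bound over patches and interfaces.

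You instead write the loss as a quadratic form in the augmented coefficient vector. This bounds the supremum over the whole class deterministically by $\|a\|_1^2\max_{m,m'}|\Delta_{mm'}|$. After conditioning on the features, only finitely many scalar Hoeffding bounds are needed, and the $\R^2$-valued interface mismatch is handled directly through inner products of the feature mismatches.

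Your approach buys two things. The argument is self-contained: no external theorem, and no measurability question for the supremum. It is also strictly sharper: because the trial class is an $\ell^1$-ball, you get $\sqrt{\log(c_0M_{\rm tot}/\delta)/N_i}$ without the factor $M_i$ (or $M_e$), and the displayed bound follows from $M_i,M_e\ge1$. The linear factor $M_i$ in the paper is what a dimension-based argument produces. In your framework it would reappear only through $\|a\|_1^2\le(M_i+1)\|a\|_2^2$ for an $\ell^2$-constrained class. The paper's route is shorter, but it relies on hypotheses of the cited result that are not checked there, notably for the vector-valued interface term.

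One correction to your bookkeeping. Because $a_0=1$, the interior term carries $(C_N+1)^2K^2$, not $C_N^2K^2$. This is $\lesssim C_N^2K^2$ only when $C_N$ is bounded below, e.g.\ $C_N\ge1$. The paper makes the same silent step when it passes from $|r_i|\le C_NC_{\rm feat}+\|\tilde f_i\|_{L^\infty(D_i)}$ to the factor $C_N^2(C_{\rm feat}+\max_i\|\tilde f_i\|_{L^\infty(D_i)})^2$.

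Some such normalization is genuinely necessary. The function $\bm v=0$ lies in the class, and its deviation $\sum_i\big(\|\tilde f_i\|_{L^2(D_i)}^2-\frac{|D_i|}{N_i}\sum_n|\tilde f_i(\bxi_{i,n})|^2\big)$ does not shrink as $C_N\to0$. So the bound as displayed fails for small $C_N$.

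Likewise, your constants $c_0$ and $C$, like the paper's, depend on $J$, $|\mathcal E|$, $|D_i|$ and $|\partial_e D|$. ``Absolute'' should therefore be read as ``atlas-dependent''.
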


\begin{proof}
\noindent\textbf{Step 1: patch residual terms.}
Fix a patch $i$. For $\bm v$ in the trial class, $v_i$ has the form
\[
v_i(\bxi)=\sum_{m=1}^{M_i} c_{i,m}\psi_{i,m}(\bxi),\qquad \|c_i\|_{\ell^1}\le C_N,
\]
and the pointwise residual is
\[
r_i(\bxi):=\Gop_i v_i(\bxi)-\tilde f_i(\bxi)
=\sum_{m=1}^{M_i} c_{i,m}\,\Gop_i\psi_{i,m}(\bxi)-\tilde f_i(\bxi).
\]
By Lemma~\ref{lem:featA-atlas} and $\tilde f_i\in L^\infty(D_i)$,
\[
|r_i(\bxi)|\le C_N C_{\rm feat}+\|\tilde f_i\|_{L^\infty(D_i)} \quad \forall \bxi\in D_i,
\]
hence $|r_i(\bxi)|^2$ is uniformly bounded on the class.

Let $\mu_i$ be the normalized uniform probability measure on $D_i$.
Applying a uniform concentration bound for squared least-squares losses over an $M_i$-dimensional linear model with bounded features
(e.g., \cite[Thm.~4.14]{AGRANN2025}) yields: there exist absolute constants $c_0,C>0$ such that, with probability at least
$1-\delta_i$,
\[
\sup_{\bm v\in \Pi_\diam\bm{\mathcal N}^{par}_{\bm M,C_N}}
\left|
\int_{D_i}|r_i|^2\,d\mu_i-\frac1{N_i}\sum_{n=1}^{N_i}|r_i(\bxi_{i,n})|^2
\right|
\le
C\,C_N^2\Big(C_{\rm feat}+\|\tilde f_i\|_{L^\infty(D_i)}\Big)^2
\,M_i\sqrt{\frac{\log(c_0 M_{\rm tot}/\delta_i)}{N_i}}.
\]
Multiplying both sides by $|D_i|$ converts this bound to the scaled quantities
$\|r_i\|_{L^2(D_i)}^2$ and $\frac{|D_i|}{N_i}\sum|r_i(\bxi_{i,n})|^2$ that appear in $\mathcal J$ and $\widehat{\mathcal J}$.

\medskip
\noindent\textbf{Step 2: interface mismatch terms.}
Fix an interface $e$ with orientation $i(e)$ and edge $\partial_e D$ equipped with arc-length measure $dl$,
and let $\mu_e$ be the normalized uniform probability measure on $\partial_e D$.
For $\bm v$ in the trial class, the interface mismatch is
\[
\Bop_e\bm v=(\delta_e^0(\bm v),\delta_e^n(\bm v)):\partial_e D\to\mathbb R^2,
\]
and depends only on coefficients from the two incident patches; denote $M_e:=M_{i(e)}+M_{j(e)}$.
By Lemma~\ref{lem:featA-atlas},
\[
\sup_{\bxi\in\partial_e D}\|\Bop_e\bm v(\bxi)\|_{\mathbb R^2}\le 2C_N C_{\rm feat},
\]
so the squared loss $\ell_e(\bxi):=\|\Bop_e\bm v(\bxi)\|_{\mathbb R^2}^2$ is uniformly bounded.

Applying the same uniform concentration theorem on the 1D domain $(\partial_e D,\mu_e)$
(either in its vector-valued form or by applying the scalar form to each component and summing)
yields: with probability at least $1-\delta_e$,
\[
\sup_{\bm v\in \Pi_\diam\bm{\mathcal N}^{par}_{\bm M,C_N}}
\left|
\int_{\partial_e D}\|\Bop_e\bm v\|_{\mathbb R^2}^2\,d\mu_e
-\frac1{N_e}\sum_{n=1}^{N_e}\|\Bop_e\bm v(\bxi_{e,n})\|_{\mathbb R^2}^2
\right|
\le
C\,C_N^2\Big(C_{\rm feat}+\max_i\|\tilde f_i\|_{L^\infty(D_i)}\Big)^2
\,M_e\sqrt{\frac{\log(c_0 M_{\rm tot}/\delta_e)}{N_e}}.
\]
Multiplying by $|\partial_e D|$ converts this bound to the scaled interface terms used in
$\mathcal J$ and $\widehat{\mathcal J}$, and multiplying further by $\eta$ yields the weighted interface contribution.

\medskip
\noindent\textbf{Step 3: union bound and summation.}
Choose $\delta_i=\delta/(2J)$ for $i=1,\dots,J$ and $\delta_e=\delta/(2|\mathcal E|)$ for $e\in\mathcal E$.
A union bound implies that all patch and interface deviation bounds hold simultaneously with probability at least $1-\delta$.
Summing the bounds over $i$ and $e$ and absorbing harmless factors such as $|D_i|$, $|\partial_e D|$ and the constants from the
vector-valued reduction into $C,c_0$ yields the stated inequality.
\end{proof}

\begin{theorem}[Convergence]\label{thm:convA-atlas}
Assume the hypotheses of Theorem~\ref{thm:approxA-atlas} and Theorem~\ref{thm:statA-atlas}, as well as
Corollary~\ref{cor:graphA-atlas-Z} and Lemma~\ref{lem:upper-graph-atlas}.
Fix $\epsilon>0$ and $\delta\in(0,1)$.
Choose widths $\bm M$ (and $C_N$) so that Theorem~\ref{thm:approxA-atlas} holds with
\[
  \epsilon_A=\frac{\epsilon}{2 C_{gn}\sqrt{C_U}}
\]
and probability at least $1-\delta/2$ over the random features.
Choose sample sizes $(N_i)_{i=1}^J$ and $(N_e)_{e\in\mathcal E}$ so that the right-hand side of
Theorem~\ref{thm:statA-atlas} is bounded by
\[
  \frac{\epsilon^2}{8C_{gn}^2}
\]
with probability at least $1-\delta/2$ over the collocation points.
If the empirical least-squares problem \eqref{eq:uM-A-atlas} is solved exactly, then
\[
  \|\bm u_{\bm M}-\bm u^\ast\|_{\bm H^2(\mathcal D)}\le \epsilon
\]
with probability at least $1-\delta$ (over both the random features and the collocation points).
\end{theorem}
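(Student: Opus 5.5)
The plan is to plug the approximation and statistical bounds into the error decomposition of Theorem~\ref{thm:decompA-atlas} and combine the two random events with a union bound. First, fix the random features. By Theorem~\ref{thm:approxA-atlas}, applied with $\epsilon_A=\epsilon/(2C_{gn}\sqrt{C_U})$ and failure probability $\delta/2$, there is an event $\mathcal A$ with $\mathbb P(\mathcal A)\ge 1-\delta/2$ on which
\[
\inf_{\bm v\in \Pi_\diam\bm{\mathcal N}^{par}_{\bm M,C_N}}\|\bm v-\bm u^\ast\|_{\bm H^2(\mathcal D)}\le\epsilon_A .
\]
The trial class is the image of a compact $\ell^1$-ball under the continuous linear map $\Pi_\diam$ into $\bm H^2(\mathcal D)$, so it is compact. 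The infimum is therefore attained at some $\bm u_a$, and $\|\bm u_a-\bm u^\ast\|^2_{\bm H^2(\mathcal D)}\le \epsilon^2/(4C_{gn}^2C_U)$. Since $C_{gn}$ and $C_U$ are deterministic constants, which matters here, the first term of \eqref{eq:decompA-atlas} is at most $2C_{gn}^2C_U\cdot\epsilon^2/(4C_{gn}^2C_U)=\epsilon^2/2$. This uses that $C_{\rm inv}$ in Corollary~\ref{cor:graphA-atlas-Z} is controlled on the feature event; if not, I would intersect with the high-probability event of Assumption~\ref{ass:inv-atlas} and absorb it into the $\delta/2$ budget.

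Next, conditionally on the features, use the collocation points. Theorem~\ref{thm:statA-atlas} and the choice of $(N_i),(N_e)$ give an event $\mathcal S$ of conditional probability at least $1-\delta/2$ on which $\sup|\mathcal J-\widehat{\mathcal J}|\le \epsilon^2/(8C_{gn}^2)$. Then the second term of \eqref{eq:decompA-atlas} is at most $4C_{gn}^2\cdot\epsilon^2/(8C_{gn}^2)=\epsilon^2/2$. Because the least-squares problem is solved exactly, $\widehat{\mathcal J}(\bm u_{\bm M})\le\widehat{\mathcal J}(\bm u_a)$, so the third term is nonpositive. Adding the three contributions gives $\|\bm u_{\bm M}-\bm u^\ast\|^2\le\epsilon^2$ on $\mathcal A\cap\mathcal S$.

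Finally, I will handle the probability. The features and the collocation points are drawn independently, and the statistical bound holds for any fixed realization of the features. The constants $C_{\rm feat}$ and $C_N$ depend only on the almost-sure bounds $r_{M_{\rm tot}}$ and $b_{M_{\rm tot}}$, not on the particular draw. Conditioning on the features and integrating therefore gives
\[
\mathbb P(\mathcal A\cap\mathcal S)\ge 1-\delta/2-\delta/2=1-\delta .
\]
The main obstacle is this decoupling. The sample sizes must be chosen uniformly over feature realizations, so they cannot depend on the random draw. I would verify this by noting that the right-hand side of Theorem~\ref{thm:statA-atlas} involves only $C_N$, $C_{\rm feat}$, $\bm M$, and $\delta$, all of which are deterministic once $\bm M$ and $C_N$ are fixed.
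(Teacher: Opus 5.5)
Your proposal is correct and follows the paper's proof essentially step for step. You plug the approximation bound (giving $\epsilon^2/2$) and the statistical bound (giving $\epsilon^2/2$) into the decomposition \eqref{eq:decompA-atlas}, drop the nonpositive optimization term because the least-squares solve is exact, and combine the feature event with the collocation event by a union bound. Your extra remarks go beyond what the paper's proof addresses: attainment of the infimum by compactness, sample sizes chosen independently of the feature draw, and the possible feature dependence of $C_{\rm inv}$ inside $C_{gn}$. These only make the argument more careful and do not change the route.
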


\begin{proof}
By the error decomposition \eqref{eq:decompA-atlas} and exact empirical least squares,
the optimization term is non-positive, hence
\[
  \|\bm u_{\bm M}-\bm u^\ast\|_{\bm H^2(\mathcal D)}^2
  \le
  2 C_{gn}^2 C_U\, \|\bm u_a-\bm u^\ast\|_{\bm H^2(\mathcal D)}^2
  + 4 C_{gn}^2 \sup_{\bm v\in \Pi_\diam\bm{\mathcal N}^{par}_{\bm M,C_N}}
  \big|\mathcal J(\bm v)-\widehat{\mathcal J}(\bm v)\big|.
\]
On the event that Theorem~\ref{thm:approxA-atlas} holds with $\epsilon_A=\epsilon/(2C_{gn}\sqrt{C_U})$,
we have $\|\bm u_a-\bm u^\ast\|_{\bm H^2(\mathcal D)}\le \epsilon_A$, so the approximation term is bounded by
\[
  2 C_{gn}^2 C_U\,\epsilon_A^2=\frac{\epsilon^2}{2}.
\]
On the event that Theorem~\ref{thm:statA-atlas} holds with bound $\epsilon^2/(8C_{gn}^2)$,
the statistical term is bounded by
\[
  4 C_{gn}^2\cdot \frac{\epsilon^2}{8C_{gn}^2}=\frac{\epsilon^2}{2}.
\]
Therefore, on the intersection of these two events,
\(
\|\bm u_{\bm M}-\bm u^\ast\|_{\bm H^2(\mathcal D)}^2\le \epsilon^2
\),
hence
\(
\|\bm u_{\bm M}-\bm u^\ast\|_{\bm H^2(\mathcal D)}\le \epsilon.
\)
Finally, a union bound yields that the intersection holds with probability at least
$(1-\delta/2)+(1-\delta/2)-1=1-\delta$.
\end{proof}

\begin{remark}[Extension beyond the Laplace--Beltrami equation]\label{rem:general-elliptic}
The analysis above extends to linear second-order \emph{strongly elliptic} surface operators of the form
\[
  \mathcal L u := -\nabla_\Gamma\cdot(A\nabla_\Gamma u) + b\cdot \nabla_\Gamma u + c\,u,
\]
where $A$ is a tangential, symmetric, uniformly positive definite tensor field on $\Gamma$, $b$ is a tangential vector field,
and $c\in L^\infty(\Gamma)$.
Assume the coefficients are sufficiently smooth (e.g.\ $C^{k-1}$ so that the pulled-back chart operators have bounded coefficients),
and that an $H^2$ graph estimate holds on the appropriate quotient space, namely: there exists $C>0$ such that for all $u\in H^2(\Gamma)$
orthogonal to $\ker(\mathcal L)$ (e.g.\ mean-zero if $\ker(\mathcal L)=\mathrm{span}\{1\}$),
\[
  \|u\|_{H^2(\Gamma)} \le C\,\|\mathcal L u\|_{L^2(\Gamma)}.
\]
Under these assumptions, all steps (chartwise uniform ellipticity, lifting/gluing, graph-norm control, and the subsequent
approximation/statistical error decomposition) carry over by replacing $\Delta_\Gamma$ with $\mathcal L$ and $\{\Gop_i\}$ with
the corresponding pulled-back local operators, with constants depending additionally on coefficient bounds of $A,b,c$.
\end{remark}

\subsubsection{Level-set-based approaches and point-cloud-based RaNN approaches.}\label{subsec:ls-pc}

When local parametrizations of $\Gamma$ are unavailable or impractical to construct, we instead work with an implicit representation. We begin by specifying the level-set description of $\Gamma$ and then present
the associated expressions for computing surface differential operators. Let $\Omega\subset\mathbb R^3$ be a bounded domain and let $\phi\in C^2(\overline{\Omega})$ be a level-set function
such that $\nabla\phi(\bX)\neq 0$ in a neighborhood of the zero level. We consider the surface
\begin{align}
\Gamma=\{\bX=(x,y,z)^\top\in\overline{\Omega}\,:\,\phi(\bX)=0\}.
\end{align}
Define the unit normal and tangential projector
\begin{align}
\bn(\bX)=\frac{\nabla\phi(\bX)}{\|\nabla\phi(\bX)\|_2},\qquad
\bP(\bX)=\bI-\bn(\bX)\bn(\bX)^\top .
\end{align}
Different sign conventions exist for mean curvature in the literature.
All formulas here are consistent with $H=\frac12\nabla\cdot\bn$, so that $\nabla\cdot\bn=2H$.

For a surface function $u:\Gamma\to\mathbb R$, let $\tilde u$ be any $C^2$ extension to a neighborhood of $\Gamma$
satisfying $\tilde u|_\Gamma=u$. Then (see, e.g., \cite{Goldman2005Curvature, Dziuk2013fem})
\begin{align}
\nabla_\Gamma u(\bX) &= \bP(\bX)\nabla \tilde u(\bX), \label{eq:ls-grad}\\
\Delta_\Gamma u(\bX) &= \nabla_\Gamma\cdot(\bP(\bX)\nabla \tilde u(\bX)) \nonumber\\
&= \Delta \tilde u(\bX) - 2H(\bX)\,\nabla \tilde u(\bX)\cdot \bn(\bX)
   - \bn(\bX)^\top(\nabla^2 \tilde u(\bX))\bn(\bX), \label{eq:ls-lb}
\end{align}
where $\nabla$ and $\Delta$ denote the standard gradient and Laplacian in $\mathbb R^3$, $\nabla_\Gamma\cdot$ denotes the surface divergence and $\nabla_\Gamma u=\bP\nabla \tilde u$..

\noindent\textbf{Level-set-based approach.}
When the available geometric description is the level-set function $\phi$, we solve the surface PDE on
$\Gamma=\{\phi=0\}$ by an embedding-type RaNN defined in $\mathbb R^3$. We construct a one-hidden-layer randomized neural network with three inputs and
one output:
\begin{align}\label{eq:rann-ambient}
u_{imp}(\bX)=\bc\,\rho(\bw\bX+\bb),\qquad \bX\in\Omega,
\end{align}
where $\bw\in\mathbb{R}^{M\times 3}$ has i.i.d.\ entries drawn from $U(-r_x,r_x)$, $\bc\in\mathbb R^{1\times M}$ is trainable, and
\begin{align}
\bb:=-(\bw\odot \bB^{\top})\,\mathbf 1_3,\qquad \mathbf 1_3=(1,1,1)^\top,
\end{align}
with $\bB=[\bB_1,\dots,\bB_M]\in\mathbb R^{3\times M}$ whose columns are drawn i.i.d.\ from the uniform distribution on an
axis-aligned bounding box $\widetilde D\subset\mathbb R^3$ containing $\Gamma$ (i.e., $\bB_m\sim U(\widetilde D)$).

In this setting we take the extension $\tilde u=u_{imp}$ in \eqref{eq:ls-grad}--\eqref{eq:ls-lb}, so that the required
ambient derivatives $\nabla \tilde u$ and $\nabla^2\tilde u$ are computed directly from the network.
Let $P_t=\{\bX_n\}_{n=1}^{N}\subset\Gamma$ be collocation points on the surface.
Enforcing the Laplace--Beltrami equation $-\Delta_\Gamma u=f$ at $\{\bX_n\}$ yields
\begin{align}
-\Big(\Delta  u_{imp}(\bX_n)  &- 2H(\bX_n)\,\nabla  u_{imp}(\bX_n)\cdot \bn(\bX_n)
- \bn(\bX_n)^\top(\nabla^2  u_{imp}(\bX_n))\bn(\bX_n)\Big)
= f(\bX_n), \quad \bX_n\in P_t. \label{eq:ls-colloc}
\end{align}
Since \eqref{eq:rann-ambient} is linear in $\bc$, the collocation system \eqref{eq:ls-colloc} is solved in the least-squares sense
for the output weights.

\medskip
Methods \eqref{m_ti_par_bc} (parametrization-based) and \eqref{eq:ls-colloc} (level-set-based) have complementary advantages.
When $\Gamma$ is highly complex, possibly with self-intersections, or when a high-quality global parametrization is unavailable,
the level-set formulation is typically more flexible because it avoids constructing an explicit atlas.
For smooth surfaces equipped with stable parametrizations, the atlas formulation is often more accurate and efficient, since sampling
in parameter space is exact and does not require auxiliary surface-extraction procedures.
In practice, sampling collocation points on a level-set surface may rely on additional algorithms such as DistMesh-type
distance-function meshing \cite{Persson2004distmesh} or Marching Cubes \cite{Lorensen1998mc}. These steps introduce extra computational
cost and may reduce sampling accuracy, which can in turn affect the final PDE solution (see Example~\ref{ex_ti_phi}).
We emphasize that the present paper focuses on presenting the numerical formulations; the choice between the two approaches should be
guided by the specific geometry representation available in the target application.

\smallskip
\noindent\textbf{Point-cloud-based RaNN approach.}
Finally, we consider the setting where the surface is given only through a finite set of point locations
$P=\{\bX_n\}_{n=1}^{N}\subset\Gamma$.
To evaluate the operator \eqref{eq:ls-lb} we reconstruct local geometric quantities from the point cloud.
Specifically, we employ a local quadratic interpolation \cite{Cazals2005fitting}
to obtain approximations of the unit normals $\bn(\bX_n)$ and mean curvature $H(\bX_n)$ (e.g., via the first and second
fundamental forms; see \cite{Walker2015fs}).
We then use the same ambient RaNN ansatz \eqref{eq:rann-ambient} and enforce \eqref{eq:ls-colloc} at the available points
$\bX_n\in P$ to determine $\bc$ in the least-squares sense.

\subsection{Time-dependent partial differential equations}

In this section, we extend the proposed approach to time-dependent surface PDEs using the heat equation as an illustrative example.

Let $\Gamma$ be a static surface and $I=[0,T]$. We consider
\begin{subequations}\label{eq:heat-surface}
\begin{align}
\partial_t u - \Delta_\Gamma u &= f \quad \text{on } I\times \Gamma,\\
u(0,\cdot) &= u_0 \quad \text{on } \Gamma,
\end{align}
\end{subequations}
where $f=f(t,\bX)$ is a given source term and $u_0=u_0(\bX)$ is the initial condition.

\medskip
\noindent\textbf{Parametrization-based approach.}
We restrict attention to \emph{closed} surfaces $\Gamma\subset\mathbb R^3$ described by a finite atlas of $C^k$
parametrizations
\[
  X_i:D_i\to \Gamma_i\subset\Gamma,\qquad i=1,\dots,J,
\]
where $D_i\subset\mathbb R^2$ are bounded Lipschitz domains and the patches form a non-overlapping decomposition
$\Gamma=\bigcup_{i=1}^J\Gamma_i$ as in \eqref{eq:patch-decomp}. We use the chart operators $\Gop_i$ defined in
\eqref{eq:opGi}, so that $(-\Delta_\Gamma u)\circ X_i=\Gop_i(u\circ X_i)$.

We directly extend \eqref{no_boundary}  to solve the heat equation, and define a \emph{single} space--time randomized neural network on embedded coordinates:
\[
  \hat u(t,\bX):=\bc\,\rho(\bw_t\,t+\bw_x\,\bX+\bb),\qquad (t,\bX)\in I\times \mathbb R^3,
\]
where $\bw_t\in\mathbb R^{M\times 1}$ and $\bw_x\in\mathbb R^{M\times 3}$ are drawn independently from
$U(-r_t,r_t)$ and $U(-r_x,r_x)$, respectively, $\bc\in\mathbb R^{1\times M}$ is trainable, and
\[
  \bb:=-(\bw\odot \bB^{\top})\,\mathbf 1_4,\qquad \bw=[\bw_t,\bw_x]\in\mathbb R^{M\times 4},\qquad \mathbf 1_4=(1,1,1,1)^\top,
\]
with $\bB\in\mathbb R^{4\times M}$ chosen to cover the range of $I\times \widetilde D$, where $\widetilde D$ is a bounding box
of $\Gamma$.

On each patch we work with the pullback
\[
  u_i(t,\bxi):=\hat u\big(t,X_i(\bxi)\big),\qquad (t,\bxi)\in I\times D_i.
\]
Since $\hat u$ is globally defined on $\Gamma$, the traces from adjacent patches agree automatically for every $t\in I$.
Hence no separate interface mismatch enforcement is required.

We solve the atlas-form heat equation by collocation: find $\hat u$ such that, for each $i=1,\dots,J$,
\begin{subequations}\label{eq:heat-atlas-nobdry}
\begin{align}
\partial_t u_i(t,\bxi)+\Gop_i u_i(t,\bxi) &= (f\circ X_i)(t,\bxi)\quad &&\text{in } I\times D_i,\\
u_i(0,\bxi) &= (u_0\circ X_i)(\bxi)\quad &&\text{in } D_i.
\end{align}
\end{subequations}
Enforcing \eqref{eq:heat-atlas-nobdry} at interior space--time collocation points yields a linear least-squares system
for the shared output weights $\bc$.

\medskip
\noindent\textbf{Level-set-based approach.}
Assume $\Gamma=\{\bX\in\Omega:\phi(\bX)=0\}$ with $\phi\in C^2(\overline{\Omega})$ and $\nabla\phi\neq 0$ near $\Gamma$.
We extend the embedding-type RaNN by augmenting the input with time:
\begin{align}\label{eq:rann-st-imp}
u_{\mathrm{imp}}(t,\bX)=\bc\,\rho\!\left(\bw_t\,t+\bw_x\,\bX+\bb\right),
\qquad (t,\bX)\in I\times\Omega,
\end{align}
where $\bw_t\in\mathbb R^{M\times 1}$ and $\bw_x\in\mathbb R^{M\times 3}$ are drawn independently from
$U(-r_t,r_t)$ and $U(-r_x,r_x)$, respectively, $\bc\in\mathbb R^{1\times M}$ is trainable, and
\[
  \bb:=-(\bw\odot\bB^\top)\,\mathbf 1_4,\qquad
  \bw=[\bw_t,\bw_x]\in\mathbb R^{M\times 4},\qquad \mathbf 1_4=(1,1,1,1)^\top,
\]
with $\bB=[\bB_1,\dots,\bB_M]\in\mathbb R^{4\times M}$ chosen to cover $I\times\widetilde D$, where $\widetilde D$ is an
axis-aligned bounding box containing $\Gamma$ (e.g., $\bB_m\sim U(I\times\widetilde D)$).

For each fixed $t\in I$ we take the extension $\tilde u(\cdot)=u_{\mathrm{imp}}(t,\cdot)$ in
\eqref{eq:ls-grad}--\eqref{eq:ls-lb}. Since $\Gamma$ is static, the geometric quantities $\bn(\bX)$ and $H(\bX)$ are
computed from $\phi$ and do \emph{not} depend on $t$.

Let $\mathring P_t=P_T\times P_\Gamma\subset I\times\Gamma$ be the space--time collocation set and
$\mathring P_t^0=\{0\}\times P_\Gamma$ the initial set. Enforcing the heat equation at $(t_i,\bX_i)\in\mathring P_t$ yields
\begin{subequations}\label{eq:heat-colloc-imp}
\begin{align}
\partial_t u_{\mathrm{imp}}(t_i,\bX_i)
&-\Big(\Delta u_{\mathrm{imp}}(t_i,\bX_i)
      -2H(\bX_i)\,\nabla u_{\mathrm{imp}}(t_i,\bX_i)\cdot \bn(\bX_i) \nonumber
     \\& -\bn(\bX_i)^\top(\nabla^2 u_{\mathrm{imp}}(t_i,\bX_i))\bn(\bX_i)\Big)
= f(t_i,\bX_i), \qquad (t_i, \bX_i)\in \mathring P_t\\
&\beta\big(u_{\mathrm{imp}}(0,\bX_i)-u_0(\bX_i)\big)=0,\qquad \bX_i\in \mathring P_t^0.
\end{align}
\end{subequations}
Since \eqref{eq:rann-st-imp} is linear in $\bc$, \eqref{eq:heat-colloc-imp} is solved in the least-squares sense for $\bc$.

\smallskip
\noindent\textbf{Point-cloud-based approach.}
If $\Gamma$ is given only by a point cloud $P_\Gamma=\{\bX_m\}_{m=1}^{N_x}\subset\Gamma$, we use the same space--time
ansatz \eqref{eq:rann-st-imp} and enforce \eqref{eq:heat-colloc-imp} on $\mathring P_t=P_T\times P_\Gamma$.
The difference is that the geometric quantities $\bn(\bX_m)$ and $H(\bX_m)$ are reconstructed from the point cloud,
e.g.\ via local quadratic fitting \cite{Cazals2005fitting} and the first and second fundamental forms \cite{Walker2015fs}.

\begin{remark}[Linear vs.\ nonlinear time-dependent PDEs]
In this paper, we focus on linear surface PDEs, for which the training objective leads to a linear least-squares problem
in the output weights. Nonlinear surface PDEs can be treated by solving a nonlinear least-squares problem or by applying
linearization schemes such as Picard or Newton iterations; see
\cite{Dong2021locELM, Shang2022DeepPetrov, Shang2024DeepPetrov, Sun2025lrnndgkdv}.
\end{remark}

\section{RaNN methods for PDEs on evolving surfaces}
\label{sec:evolving}

Solving PDEs on evolving surfaces is of considerable importance in many applications.  
In this section, we present a preliminary application of the proposed RaNN framework to the simplest linear problem on evolving surfaces.

In traditional mesh-based numerical methods, the mesh must be updated at each time step, which substantially increases computational complexity. The method proposed in this work is essentially a mesh-free method, which allows the entire set of points on the evolving surface across the spatiotemporal domain to be obtained in a single computation. { In contrast to traditional methods that require interpolation to update the mesh, the proposed approach stores the surface information directly as network parameters. This allows for subsequent usage through simple algebraic operations, eliminating the need for complex interpolation steps.}

\subsection{RaNN methods for capturing surface evolution}\label{subsec:surface-evolution}

Before solving a PDE on an evolving surface $\Gamma(t)$, we first approximate the surface motion.
Let $\Gamma_0$ be the initial surface and let $\bv:\mathbb R\times\mathbb R^3\to\mathbb R^3$ be a prescribed velocity field.
Assume that the motion is described by a flow map
\[
  \bx:[0,T]\times \Gamma_0\to\mathbb R^3,\qquad (t,\bX_0)\mapsto \bx(t,\bX_0),
\]
which satisfies the surface-trajectory ODE
\begin{subequations}\label{eq:evol-surface-flow}
\begin{align}
\partial_t \bx(t,\bX_0) &= \bv\big(t,\bx(t,\bX_0)\big), \qquad t\in[0,T],\ \bX_0\in\Gamma_0,\\
\bx(0,\bX_0) &= \bX_0, \qquad \bX_0\in\Gamma_0.
\end{align}
\end{subequations}
For clarity, we assume that $\Gamma_0$ is parametrized by $\bX_0(\xi,\eta)$ on a parameter domain $D$ so that
$\Gamma_0=\{\bX_0(\xi,\eta):(\xi,\eta)\in D\}$; however, the construction below is formulated directly in the embedded
coordinates $\bX_0\in\Gamma_0$.

Writing $\bx=(x_1,x_2,x_3)^\top$ and $\bv=(v_1,v_2,v_3)^\top$, \eqref{eq:evol-surface-flow} is equivalent to the coupled system
\begin{align}\label{eq:evol-components}
\partial_t x_i(t,\bX_0) = v_i\!\big(t,x_1(t,\bX_0),x_2(t,\bX_0),x_3(t,\bX_0)\big),\qquad i=1,2,3,
\end{align}
with $x_i(0,\bX_0)=(\bX_0)_i$.

We introduce three randomized neural networks $\mathcal N_e^1,\mathcal N_e^2,\mathcal N_e^3$ to approximate
$x_1,x_2,x_3$, respectively. Each network takes as input the time $t$ and the initial point $\bX_0\in\Gamma_0$
and outputs the corresponding coordinate at time $t$:
\begin{equation}\label{eq:rann-flow-3nets}
  \mathcal N_e^i(t,\bX_0)=\bc^{\,i}\,\rho\!\left(\bw^{\,i}\begin{bmatrix} t\\ \bX_0 \end{bmatrix}+\bb^{\,i}\right),
  \qquad i=1,2,3,
\end{equation}
where $\bw^{\,i}\in\mathbb R^{M\times 4}$ is sampled randomly, $\bc^{\,i}\in\mathbb R^{1\times M}$ is trainable, and
$\rho$ is applied componentwise.

Following the parameter selection strategy introduced earlier, we use a shallow randomized network with separated
time and spatial weights. Specifically, in \eqref{eq:rann-flow-3nets} we write $\bw^{\,i}=[\bw^{\,i}_t,\bw^{\,i}_x]$ with
$\bw^{\,i}_t\in\mathbb R^{M\times 1}$ and $\bw^{\,i}_x\in\mathbb R^{M\times 3}$ drawn independently from
$U(-r_t^i,r_t^i)$ and $U(-r_x^i,r_x^i)$, respectively.
The biases are defined as
\[
  \bb^{\,i}:=-(\bw^{\,i}\odot (\bB^{\,i})^\top)\,\mathbf 1_4,\qquad \mathbf 1_4=(1,1,1,1)^\top,
\]
where $\bB^{\,i}\in\mathbb R^{4\times M}$ is sampled to cover the range of $I\times\widetilde D$, with $\widetilde D$ an
axis-aligned bounding box of $\Gamma_0$.

Let $P_0=\{\bX_{0,m}\}_{m=1}^{N_x}\subset\Gamma_0$ be collocation points on the initial surface and
$P_T=\{t_n\}_{n=1}^{N_t}\subset[0,T]$ be time collocation points. Define the space--time set
$\mathring P:=P_T\times P_0$ and the initial set $\mathring P^0:=\{0\}\times P_0$. We determine the trainable output weights $\{\bC^{\,i}\}_{i=1}^3$ by enforcing \eqref{eq:evol-components} at space--time collocation points, specifically, we seek $\{\mathcal N_e^i\}_{i=1}^3$ such that
\begin{subequations}\label{eq:flow-colloc-3nets}
\begin{align}
&\partial_t \mathcal N_e^i(t_n,\bX_{0,m})
= v_i\!\big(t_n,\mathcal N_e^1(t_n,\bX_{0,m}),\mathcal N_e^2(t_n,\bX_{0,m}),\mathcal N_e^3(t_n,\bX_{0,m})\big),
\quad i=1,2,3,\ \forall (t_n,\bX_{0,m})\in\mathring P,\\
&\beta\Big(\big(\mathcal N_e^1,\mathcal N_e^2,\mathcal N_e^3\big)(0,\bX_{0,m})-\bX_{0,m}\Big)=0,
\quad \forall \bX_{0,m}\in P_0,
\end{align}
\end{subequations}
and solve the resulting system in the least-squares sense, where $\beta>0$ controls the enforcement of the initial condition.
In general, because $\bv$ depends on the network outputs, \eqref{eq:flow-colloc-3nets} is a \emph{nonlinear} least-squares problem.
It can be solved by standard nonlinear least-squares solvers or linearization strategies \cite{Sun2025lrnndgkdv} (e.g., Gauss--Newton).

\begin{remark}[On topology preservation]
The formulation above assumes that the flow map $\bx(t,\cdot)$ transports $\Gamma_0$ to $\Gamma(t)$ without topological changes,
i.e., $\Gamma(t)$ remains homeomorphic and typically diffeomorphic to $\Gamma_0$ for all $t\in[0,T]$.
Capturing evolutions with topology changes (e.g., pinch-off or merging) requires a different representation, and will be
investigated in future work.
\end{remark}

\subsection{RaNN Methods for solving PDEs}\label{subsec:evol-pde}

Once an approximation of the surface evolution is available via the learned flow map $\mathcal N_e$,
we can sample collocation points on the space--time surface $I\times\Gamma(t)$.
We now extend the proposed RaNN methodology to the advection--diffusion equation on evolving surfaces:
\begin{subequations}\label{eq:ev-u}
\begin{align}
\partial^\bullet u + u\,\nabla_{\Gamma(t)}\cdot \bv - \Delta_{\Gamma(t)}u &= f
\quad \text{on } \Gamma(t),\ t\in[0,T],\\
u(0,\cdot) &= u_0 \quad \text{on } \Gamma_0.
\end{align}
\end{subequations}
Here $\partial^\bullet u$ denotes the material derivative. Since $u$ is defined only on $\Gamma(t)$,
we interpret $\partial^\bullet u$ through an extension $\tilde u$ in a neighborhood of $\Gamma(t)$:
\[
\partial^\bullet u := \partial_t \tilde u + \bv\cdot\nabla \tilde u \quad \text{on } \Gamma(t).
\]
In our numerical formulation we take a RaNN to approximate the solution, so all ambient derivatives are computed directly
from the network.

Let $\mathcal N_e=(\mathcal N_e^1,\mathcal N_e^2,\mathcal N_e^3)^\top$ denote the trained flow-map network obtained in
Section~\ref{subsec:surface-evolution}. Then, for $(\xi_1,\xi_2)\in D$, the evolving surface is represented by the
time-dependent parametrization
\[
\bX(t,\xi,\eta) := \mathcal N_e\!\big(t,\bX_0(\xi,\eta)\big)\in\Gamma(t).
\]
By differentiating $\bX(t,\xi_1,\xi_2)$ with respect to $(\xi_1,\xi_2)$, we obtain the first and second fundamental forms,
and hence approximations of the unit normal vector $\bn_{\mathcal N}(t,\xi_1,\xi_2)$ and the mean curvature
$H_{\mathcal N}(t,\xi_1,\xi_2)$. These geometric quantities are used to evaluate
$\nabla_{\Gamma(t)}\cdot\bv$ and $\Delta_{\Gamma(t)}$ at collocation points.

Choose parameter samples $\{(\xi_{1,m},\xi_{2,m})\}_{m=1}^{N_x}\subset D$ and time samples $\{t_n\}_{n=1}^{N_t}\subset[0,T]$.
Define the space--time collocation set on the evolving surface
\[
P_e := \Big\{(t_n,\bX(t_n,\xi_{1,m},\xi_{2,m})):\ n=1,\dots,N_t,\ m=1,\dots,N_x\Big\}\subset [0,T]\times\Gamma(t),
\]
and the initial set
\[
P_e^0 := \Big\{(0,\bX_0(\xi_{1,m},\xi_{2,m})):\ m=1,\dots,N_x\Big\}\subset \{0\}\times\Gamma_0.
\]

We introduce an ambient space--time RaNN
\[
u_{e}(t,\bX)=\bc\,\rho\!\left(\bw\begin{bmatrix}t\\ \bX\end{bmatrix}+\bb\right),
\qquad (t,\bX)\in[0,T]\times\mathbb R^3,
\]
with $\bw=[\bw_t,\bw_x]\in\mathbb R^{M\times 4}$, where $\bw_t\in\mathbb R^{M\times 1}$ and
$\bw_x\in\mathbb R^{M\times 3}$ are drawn from $U(-r_t,r_t)$ and $U(-r_x,r_x)$, respectively, and
$\bb:=-(\bw\odot\bB^\top)\mathbf 1_4$ with $\bB$ sampled on a bounding box of $[0,T]\times\Gamma_0$ as before.

At each collocation point $(t_i,\bX_i)\in P_e$, we evaluate the operator
\[
\mathcal L  u_{e}
:= \partial^\bullet  u_{e} +  u_{e}\,(\nabla_{\Gamma}\cdot\bv) - \Delta_{\Gamma} u_{e},
\]
where $\partial^\bullet  u_{e}=\partial_t u_{e}+\bv\cdot\nabla u_{e}$.
Using the embedding identities with $\bn=\bn_{\mathcal N}$ and $H=H_{\mathcal N}$, we compute
\[
\nabla_{\Gamma}\cdot\bv
= \nabla\cdot\bv - \bn_{\mathcal N}^\top(\nabla\bv)\bn_{\mathcal N},
\qquad
\Delta_{\Gamma} u_{e}
= \Delta u_{e} - 2H_{\mathcal N}\,\nabla u_{e}\cdot \bn_{\mathcal N}
- \bn_{\mathcal N}^\top(\nabla^2 u_{e})\bn_{\mathcal N}.
\]
We then enforce the PDE and the initial condition by collocation:
\begin{subequations}\label{eq:ev-uNN}
\begin{align}
\mathcal L  u_e(t_i,\bX_i) &= f(t_i,\bX_i), \qquad &&\forall (t_i,\bX_i)\in P_e,\\
 \beta\big(u_e(0,\bX_i)-u_0(\bX_i)\big)&=0,\qquad &&\forall (0,\bX_i)\in P_e^0,
\end{align}
\end{subequations}
and solve the resulting system by the least-squares method for the output weights $\bc$.

\section{Numerical Experiments}
\label{sec:ex}

In this section, RaNN is applied to various problems on static and evolving surfaces in order to assess its performance.
All experiments were conducted on an Intel(R) Xeon(R) Gold 6240 CPU @ 2.60GHz.
Neural networks were implemented in Python using PyTorch (version 1.12.1) with a fixed random seed to ensure reproducibility.
The output-layer parameters were obtained by solving a least-squares problem using SciPy (version 1.4.1).
The hidden-layer weights were sampled i.i.d.\ from uniform distributions $U(-r_x,r_x)$ for spatial inputs and
$U(-r_t,r_t)$ for the temporal input (when present), and were kept fixed throughout training; only the output weights were optimized.
For further discussion on the impact and practical selection of $r_x$ and $r_t$, see
\cite{Dong2021locELMW, Zhang2024Transferable, AGRANN2025}.
The activation function used in all examples was the hyperbolic tangent $\tanh(\cdot)$.

To evaluate accuracy, we randomly select $N_{\rm test}$ test points $\{\bx_i\}_{i=1}^{N_{\rm test}}$ on the surface and compute
the relative $\ell^2$ error between the exact solution $u^*$ and the numerical solution $u$:
\[
E^{\ell^2}(u)
:= \frac{\left(\sum_{i=1}^{N_{\rm test}} \big(u^*(\bx_i)-u(\bx_i)\big)^2\right)^{1/2}}
{\left(\sum_{i=1}^{N_{\rm test}} \big(u^*(\bx_i)\big)^2\right)^{1/2}} .
\]

Unless otherwise stated, test points are drawn independently of the collocation (training) points.

Since we employ shallow neural networks with fixed and known input-to-hidden weights, the required derivatives of the network output
with respect to the inputs (including second-order derivatives for Laplace--Beltrami terms) can be computed in closed form.
This avoids automatic differentiation in the residual evaluation and reduces computational overhead in our implementation.

\begin{example}[Laplace--Beltrami equation on a torus surface] 
\label{ex_ti_par}
We consider the Laplace--Beltrami equation
\[
-\Delta_\Gamma u = f \quad \text{on } \Gamma,
\]
where the torus $\Gamma \subset \mathbb{R}^3$ admits a global parametrization $X=X(\xi_1,\xi_2)$:
\[
\left\{
\begin{array}{rcl}
x &=& \left( \frac{1}{4} \cos(\xi_1) + 1 \right) \cos(\xi_2), \\
y &=& \left( \frac{1}{4} \cos(\xi_1) + 1 \right) \sin(\xi_2), \\
z &=& \frac{1}{4} \sin(\xi_1),
\end{array}
\right.
\qquad (\xi_1,\xi_2)\in[0,2\pi]\times[0,2\pi].
\]
We set $u(x,y,z)=\sin(x)\exp(\cos(y-z))$, and derive the source term $f$ from this exact solution.
\end{example}

We first solve this problem using the parametrization-based formulation on the single chart $D=[0,2\pi]^2$.
Since $\Gamma$ admits a global parametrization, we take $J=1$ and approximate $u$ by a single RaNN defined on $D$.
Because the parametrization is periodic in both $\xi_1$ and $\xi_2$, we additionally enforce periodic compatibility across the
identified boundary pairs of $D$ (matching function values and first derivatives across $\xi_1=0$ vs.\ $\xi_1=2\pi$ and
$\xi_2=0$ vs.\ $\xi_2=2\pi$) through boundary collocation.
For closed-surface Poisson problems, all errors are computed after removing the constant mode by shifting the numerical solution to match a reference value at a fixed point (cf.\ Remark~\ref{rem:const-shift}).

The input-to-hidden weights are sampled from $U(-1,1)$, and the bias sampling set $\bB$ is drawn from $U(\tilde D)$ with
$\tilde D=[0,2\pi]^2$, consistent with the parameter ranges.
We uniformly sample the parameter domain $D=[0,2\pi]^2$ with $N_0$ points per direction, yielding
\[
N_{\rm col}=N_0^2
\]
interior collocation points for enforcing the PDE. Additional collocation points are placed on the boundary pairs to impose the
periodic compatibility conditions.
Table~\ref{tablel_torus_phi} reports the relative errors and training times for different network widths $M$ and collocation sizes
$N_{\rm col}$. Non-monotone behavior can occur due to the random-feature draw and the conditioning of the least-squares system.

\begin{table}[H]
\centering
\begin{tabular}{ccccccc}
\toprule[1.5pt]
$M$     & \multicolumn{2}{c}{600} & \multicolumn{2}{c}{1000} & \multicolumn{2}{c}{1400} \\ \cmidrule[0.5pt](l){2-3} \cmidrule[0.5pt](l){4-5} \cmidrule[0.5pt](l){6-7} 
$N_{\rm col}$   & Error      & Time (s)    & Error       & Time (s)    & Error       & Time (s)    \\ \midrule[0.5pt]
900             & 1.77E-04   & 0.078   & 2.95E-04    & 0.139   & 4.17E-04    & 0.191   \\
2500            & 4.77E-05   & 0.111   & 4.04E-06    & 0.213   & 1.63E-06    & 0.304   \\
4900           & 1.79E-05   & 0.172   & 2.06E-07    & 0.449   & 1.32E-07    & 0.721   \\
8100           & 3.86E-05   & 0.415   & 3.66E-07    & 0.666   & 5.90E-08    & 1.018  \\
\bottomrule[1.5pt]
\end{tabular}
\caption{Relative errors obtained by the single-chart parametrization-based approach
(with periodic boundary compatibility enforced) in Example~\ref{ex_ti_par}}
\label{tablel_torus_phi}
\end{table}

These results already surpass those reported in \cite{Hu2024pinn}.
To further improve performance, we employ parametrization-based approach(no separate boundary enforcement) \eqref{no_boundary}, which naturally incorporates periodic boundary conditions.
Here $\bB$ is sampled from $U([-1.5,1.5]\times[-1.5,1.5]\times[-0.5,0.5])$, while keeping all other parameters unchanged.
The updated results are shown in Table~\ref{tablel_torus_phixyz}.

\begin{table}[H]
\centering
\begin{tabular}{ccccccc}
\toprule[1.5pt]
$M$     & \multicolumn{2}{c}{600} & \multicolumn{2}{c}{1000} & \multicolumn{2}{c}{1400} \\ \cmidrule[0.5pt](l){2-3} \cmidrule[0.5pt](l){4-5} \cmidrule[0.5pt](l){6-7} 
$N_{\rm col}$    & Error      & Time (s)    & Error       & Time (s)    & Error       & Time (s)    \\ \midrule[0.5pt]
900           & 4.23E-07   & 0.081   & 1.33E-07    & 0.143   & 1.01E-07    & 0.178   \\
2500           & 4.16E-11   & 0.118   & 4.74E-12    & 0.229   & 3.34E-12    & 0.400   \\
4900          & 1.01E-10   & 0.262   & 1.93E-12    & 0.548   & 4.87E-14    & 1.015   \\
8100          & 7.61E-11   & 0.597   & 7.11E-13    & 1.072   & 2.57E-13    & 1.626   \\
\bottomrule[1.5pt]
\end{tabular}
\caption{Relative errors obtained by parametrization-based approach (no separate boundary enforcement) in Example \ref{ex_ti_par}.}
\label{tablel_torus_phixyz}
\end{table}

\begin{example}[Laplace--Beltrami equation on a Cheese-like surface] 
\label{ex_ti_phi}
We consider the Laplace--Beltrami equation $-\Delta_\Gamma u=f$ on a static surface $\Gamma$ with the same exact solution
$u(x,y,z)=\sin(x)\exp(\cos(y-z))$. The surface does not admit a convenient global parametrization, but is represented implicitly as
the zero level set $\Gamma=\{\phi_{CL}=0\}$ with
\begin{align*}
\phi_{CL}(x,y,z)=&(4x^2-1)^2+(4y^2-1)^2+(4z^2-1)^2+16(x^2+y^2-1)^2\\
&+16(x^2+z^2-1)^2+16(y^2+z^2-1)^2-16.
\end{align*}
The source term $f$ is derived from the exact solution.
\end{example}

To generate collocation points on $\Gamma$, we employ a signed-distance-based meshing strategy \cite{Persson2004distmesh}
using the DistMesh package (MATLAB), which yields a point set $P_{CL}^{\rm coarse}$ with $N_{\rm all}=21192$ samples.
We set $r_x=1$ and choose the bounding box $\widetilde D=[-1.5,1.5]^3$ according to the geometry of the surface.
In each run, we use a network width $M$ and randomly select $N_{\rm col}$ points from $P_{CL}^{\rm coarse}$ for training,
while evaluating errors on the full set $P_{CL}^{\rm coarse}$.
The results are reported in Table~\ref{tablel_cl_coarse}.
Note that the test set $P_{CL}^{\rm coarse}$ contains the randomly selected training subset; since $N_{\rm all}\gg N_{\rm col}$ and we focus on global accuracy over the entire surface, this overlap has a negligible impact on the reported errors.

\begin{table}[H]
\centering
\begin{tabular}{ccccccc}
\toprule[1.5pt]
$M$   & \multicolumn{2}{c}{1200} & \multicolumn{2}{c}{1600} & \multicolumn{2}{c}{2000} \\ \cmidrule[0.5pt](l){2-3} \cmidrule[0.5pt](l){4-5} \cmidrule[0.5pt](l){6-7} 
$N_{\rm col}$   & Error       & Time (s)    & Error       & Time (s)    & Error       & Time (s)    \\ \midrule[0.5pt]
2000          & 5.70E-07    & 0.229   & 1.18E-06    & 0.366   & 7.98E-08    & 0.485   \\
4000          & 1.16E-08    & 0.334   & 1.25E-06    & 0.740   & 6.42E-08    & 0.984   \\
6000          & 5.57E-07    & 0.567   & 3.01E-07    & 0.988   & 3.51E-08    & 1.387   \\
8000           & 6.43E-08    & 0.770   & 9.04E-08    & 1.344   & 1.29E-08    & 1.755   \\
\bottomrule[1.5pt]
\end{tabular}
\caption{Relative errors (evaluated on $P_{CL}^{\rm coarse}$) obtained by the level-set-based approach in Example~\ref{ex_ti_phi}.}
\label{tablel_cl_coarse}
\end{table}

\begin{table}[H]
\centering
\begin{tabular}{ccccccc}
\toprule[1.5pt]
$M$     & \multicolumn{2}{c}{1200} & \multicolumn{2}{c}{1600} & \multicolumn{2}{c}{2000} \\ \cmidrule[0.5pt](l){2-3} \cmidrule[0.5pt](l){4-5} \cmidrule[0.5pt](l){6-7} 
$N_{\rm col}$    & Error       & Time (s)    & Error       & Time (s)    & Error       & Time (s)    \\ \midrule[0.5pt]
2000          & 1.44E-09    & 0.236   & 6.93E-10    & 0.331   & 1.36E-10    & 0.456   \\
4000           & 1.04E-09    & 0.427   & 1.08E-10    & 0.606   & 3.72E-12    & 0.991   \\
6000           & 3.30E-10    & 0.690   & 1.83E-11    & 1.013   & 3.60E-12    & 1.285   \\
8000           & 3.26E-10    & 0.900   & 1.19E-11    & 1.308   & 1.76E-12    & 1.702   \\
\bottomrule[1.5pt]
\end{tabular}
\caption{Relative errors (evaluated on $P_{CL}^{\rm fine}$) obtained by the level-set-based approach in Example~\ref{ex_ti_phi}.}
\label{tablel_cl_fine}
\end{table}

We observe that further accuracy improvement is limited when training points deviate from the zero level set.
To quantify the geometric sampling error of a point set $P=\{\bX_i\}_{i=1}^{N_{\rm all}}$, we define the RMS level-set residual
\[
E_p(P):=\left(\frac1{N_{\rm all}}\sum_{i=1}^{N_{\rm all}}|\phi_{CL}(\bX_i)|^2\right)^{1/2}.
\]
For $P_{CL}^{\rm coarse}$ we obtain $E_p(P_{CL}^{\rm coarse})=1.17\times 10^{-5}$.

\begin{figure}[H] 
	\centering  
	\begin{subfigure}{0.45\textwidth}
		\centering      
		\includegraphics[width=1.0\textwidth]{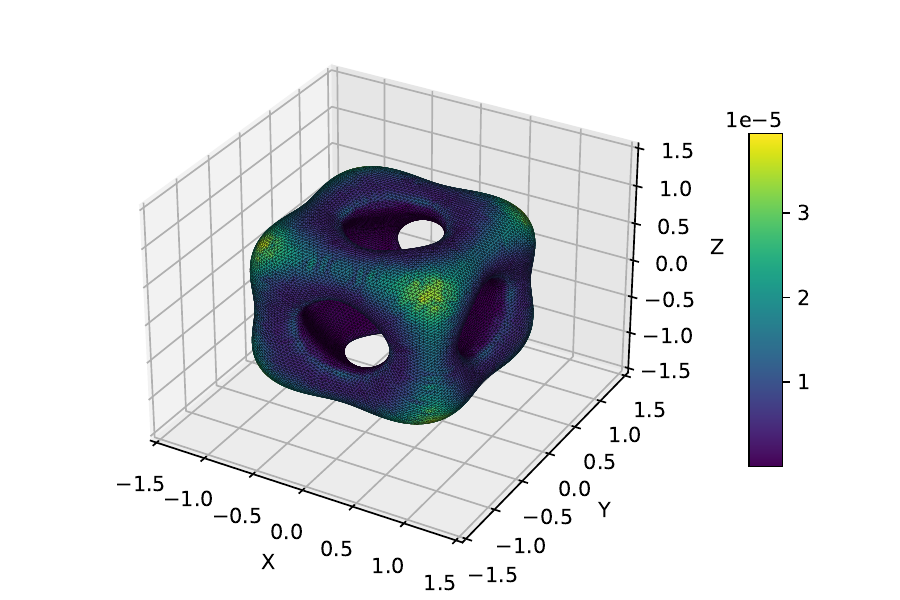}  
		\caption{Errors of the point set $P_{CL}^{coarse}$.}
		%\label{fig:a}
	\end{subfigure}
	\begin{subfigure}{0.45\textwidth}
		\centering      
		\includegraphics[width=1.0\textwidth]{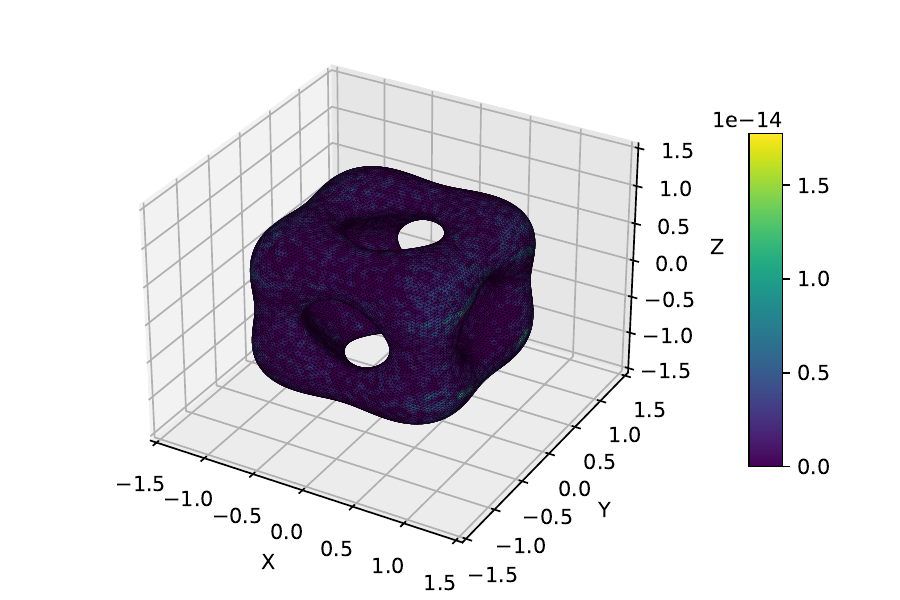}  
		\caption{Errors of the point set $P_{CL}^{fine}$.}
		%\label{fig:b}
	\end{subfigure}
	\caption{Pointwise absolute errors of the two point sets.}  
	\label{Ep_error}
\end{figure}

To reduce the off-surface sampling error, we project each coarse point onto the zero level set $\{\phi_{CL}=0\}$
by a few Newton-type updates along the level-set gradient, until $|\phi_{CL}(\bX)|$ falls below a prescribed tolerance.
The resulting point set is denoted by $P_{CL}^{\rm fine}$.
To mitigate this issue, we employ the \texttt{fsolve} function in SciPy to adjust the coarse points and obtain corrected coordinates, denoted as $P_{CL}^{fine}$ with $E_p(P_{CL}^{\rm fine})=1.87\times 10^{-15}$.
Figure~\ref{Ep_error} shows the pointwise residual $|\phi_{CL}(\bX_i)|$ for both point sets.

Keeping all other settings unchanged, we draw training points from $P_{CL}^{\rm fine}$ and report the corresponding results in
Table~\ref{tablel_cl_fine}, demonstrating a substantial accuracy improvement at comparable computational cost.

\begin{example}[Heat equation on the cheese-like surface]\label{ex_td_cl}
We consider the heat equation on the static cheese-like surface $\Gamma=\{\phi_{CL}=0\}$ over the time interval $I=[0,1]$.
The exact solution is
\[
u(t,x,y,z)=\sin\!\big(x+\sin t\big)\exp(\cos(y-z)),
\]
from which the source term is derived.
\end{example}

As in Example~\ref{ex_ti_phi}, we use two spatial point sets on $\Gamma$: a coarse set $P^{\rm coarse}_{CL}$ and a refined set
$P^{\rm fine}_{CL}$, both of size $N_{\rm all}$.
We uniformly partition $I$ into $200$ subintervals and define the temporal set
\[
P_T=\{0=t_0<t_1<\cdots<t_{200}=1\}.
\]
The corresponding spatiotemporal point sets are
\[
\mathring P^{\rm coarse}_{CL}:=P_T\times P^{\rm coarse}_{CL},\qquad
\mathring P^{\rm fine}_{CL}:=P_T\times P^{\rm fine}_{CL}.
\]
In each run, we randomly select $N_{\rm col}$ points from $\mathring P^{\rm coarse}_{CL}$ (or $\mathring P^{\rm fine}_{CL}$)
to enforce the PDE residual, and select $N_0$ points from $\{0\}\times P^{\rm coarse}_{CL}$ (or $\{0\}\times P^{\rm fine}_{CL}$)
to impose the initial condition. The sets $\{1\}\times P^{\rm coarse}_{CL}$ and $\{1\}\times P^{\rm fine}_{CL}$ are used for evaluation.

\begin{table}[H]
\centering
\begin{tabular}{ccccccccc}
\toprule[1.5pt]
$M$    & \multicolumn{2}{c}{1200} & \multicolumn{2}{c}{1600} & \multicolumn{2}{c}{2000} & \multicolumn{2}{c}{2400} \\ \cmidrule[0.5pt](l){2-3} \cmidrule[0.5pt](l){4-5} \cmidrule[0.5pt](l){6-7} \cmidrule[0.5pt](l){8-9}
$N_{\rm col},\ N_0$ & Error       & Time (s)    & Error       & Time (s)    & Error       & Time (s)    & Error       & Time (s)    \\ \midrule[0.5pt]
2000, 1000        & 1.89E-04    & 0.280   & 1.40E-04    & 0.428   & 1.56E-04    & 0.655   & 1.45E-04    & 1.137   \\
4000, 1000       & 9.97E-05    & 0.642   & 4.94E-05    & 0.901   & 2.11E-05    & 1.204   & 1.21E-05    & 1.598   \\
6000, 1000        & 4.55E-05    & 0.915   & 1.89E-05    & 1.285   & 1.36E-05    & 1.571   & 5.90E-06    & 2.098   \\
8000, 1000        & 4.80E-05    & 1.095   & 1.52E-05    & 1.587   & 9.78E-06    & 1.985   & 3.43E-06    & 2.541   \\
\bottomrule[1.5pt]
\end{tabular}
\caption{Relative errors obtained by the level-set-based approach using $\mathring P^{\rm coarse}_{CL}$ in Example~\ref{ex_td_cl}.}
\label{tablel_cl_heat_coarse}
\end{table}

\begin{table}[H]
\centering
\begin{tabular}{ccccccccc}
\toprule[1.5pt]
$M$    & \multicolumn{2}{c}{1200} & \multicolumn{2}{c}{1600} & \multicolumn{2}{c}{2000} & \multicolumn{2}{c}{2400} \\ \cmidrule[0.5pt](l){2-3} \cmidrule[0.5pt](l){4-5} \cmidrule[0.5pt](l){6-7} \cmidrule[0.5pt](l){8-9}
$N_{\rm col},\ N_0$ & Error       & Time (s)    & Error       & Time (s)    & Error       & Time (s)    & Error       & Time (s)    \\ \midrule[0.5pt]
2000, 1000        & 1.89E-04    & 0.271   & 1.37E-04    & 0.477   & 1.65E-04    & 0.667   & 1.50E-04    & 0.938   \\
4000, 1000        & 9.95E-05    & 0.534   & 4.94E-05    & 0.834   & 1.97E-05    & 1.281   & 1.13E-05    & 1.637   \\
6000, 1000        & 4.55E-05    & 0.927   & 1.91E-05    & 1.231   & 1.29E-05    & 1.672   & 5.23E-06    & 2.111   \\
8000, 1000        & 4.80E-05    & 1.114   & 1.52E-05    & 1.582   & 9.71E-06    & 2.050   & 3.57E-06    & 2.540   \\
\bottomrule[1.5pt]
\end{tabular}
\caption{Relative errors obtained by the level-set-based approach using $\mathring P^{\rm fine}_{CL}$ in Example~\ref{ex_td_cl}.}
\label{tablel_cl_heat_fine}
\end{table}

We set $r_x=r_t=0.6$, $\beta=100$, and sample $\bB\sim U([0,1]\times[-1.5,1.5]^3)$.
The results obtained with $\mathring P^{\rm coarse}_{CL}$ and $\mathring P^{\rm fine}_{CL}$ are reported in
Tables~\ref{tablel_cl_heat_coarse} and \ref{tablel_cl_heat_fine}, respectively.
For this time-dependent example and the present parameter choices, using coarse versus refined spatial points leads to
comparable accuracy, suggesting that the overall error is dominated by factors other than the geometric residual of the spatial sampling.
A plausible explanation is that the heat equation is parabolic and exhibits a smoothing effect in time, so small off-surface
perturbations of the collocation locations are damped rather than amplified.
Moreover, in our implementation the dominant error sources are likely the finite model capacity (fixed-width random features),
the stochasticity of collocation subsampling in the space--time set, and the conditioning of the resulting least-squares system.
Under the present choices of $(r_x,r_t)$, $\beta$, and the space--time sampling density, these factors appear to dominate the overall
error, leading to comparable accuracies for $\mathring P^{\rm coarse}_{CL}$ and $\mathring P^{\rm fine}_{CL}$.

\begin{example}[Heat equation on a cup-shaped surface]\label{ex:cup_heat}
We solve the heat equation on a (non-closed) cup-shaped surface $\Gamma_{\rm cup}$ over $I=[0,1]$:
\[
\partial_t u - \alpha\,\Delta_{\Gamma}u = f \quad \text{on } I\times \Gamma_{\rm cup},
\]
with thermal diffusivity $\alpha=10$ and initial condition $u(0,\cdot)\equiv 10$ on $\Gamma_{\rm cup}$.
The surface $\Gamma_{\rm cup}$ consists of two parametrized parts, $\Gamma_{\rm cup}=\Gamma_{\rm cup}^1\cup\Gamma_{\rm cup}^2$,
defined by the charts $X_1:D_1\to\Gamma_{\rm cup}^1$ and $X_2:D_2\to\Gamma_{\rm cup}^2$:
\[
X_1(\xi_1,\xi_2)=
\Big(\sqrt{1-\big(\tfrac{\xi_2+1}{0.1}\big)^2}\cos\xi_1,\ \sqrt{1-\big(\tfrac{\xi_2+1}{0.1}\big)^2}\sin\xi_1,\ \xi_2\Big),
\quad D_1=[0,2\pi]\times[-1.1,-1],
\]
\[
X_2(\xi_1,\xi_2)=(\cos\xi_1,\ \sin\xi_1,\ \xi_2),
\quad D_2=[0,2\pi]\times[-1,1].
\]
A localized heat source is applied near the bottom for $t\in[0,0.5]$ and removed afterwards:
\[
f(t,\bX)=15\,\exp\!\big(-\big(x^2+y^2+(z+1.1)^2\big)\big)\,\mathbf 1_{[0,0.5]}(t).
\]
At the rim $\partial\Gamma_{\rm cup}=\{X_2(\xi_1,1):\xi_1\in[0,2\pi]\}$ we impose the Robin boundary condition
\[
u+\partial_\mu u = 10 \quad \text{on } I\times \partial\Gamma_{\rm cup},
\]
where $\mu$ denotes the outward unit \emph{co-normal} (tangent to $\Gamma_{\rm cup}$ and normal to $\partial\Gamma_{\rm cup}$).
\end{example}

To avoid explicitly enforcing compatibility across the junction $\Gamma_{\rm cup}^1\cap\Gamma_{\rm cup}^2$,
we use a \emph{single} space--time RaNN defined on embedded coordinates:
\[
\hat u(t,\bX)=\bc\,\rho(\bw_t\,t+\bw_x\,\bX+\bb),\qquad (t,\bX)\in I\times\mathbb R^3.
\]
We then evaluate $u$ on each chart via the pullbacks $u_i(t,\bxi)=\hat u(t,X_i(\bxi))$, $i=1,2$.
Since $\hat u$ is globally defined on $\Gamma_{\rm cup}$, the traces from the two charts coincide automatically along the
junction curve, so no separate interface mismatch enforcement is required.

We enforce the PDE residual at interior collocation points on $I\times D_1$ and $I\times D_2$, the initial condition at
$\{0\}\times(D_1\cup D_2)$, and the Robin boundary condition on $I\times\{\xi_2=1\}\subset I\times D_2$.
For the Robin term we use $\partial_\mu u=\nabla_\Gamma \hat u\cdot \mu$, where $\nabla_\Gamma \hat u=(\bI-\bn\bn^\top)\nabla \hat u$
and $\mu$ can be computed from the parametrization of $X_2$ on $\xi_2=1$.

In our experiments we set $r_t=r_x=1$, $\beta=100$, and use network width $M=2000$.
We take $N_t=N_{\xi_1}=N_{\xi_2}=25$ uniform samples in each coordinate direction:
$N_tN_{\xi_1}N_{\xi_2}$ interior space--time points per patch for the PDE residual,
$N_{\xi_1}N_{\xi_2}$ points per patch at $t=0$ for the initial condition,
and $N_tN_{\xi_1}$ points on $I\times\{\xi_2=1\}$ for the Robin boundary condition.
Figure~\ref{cup_heat} shows that the predicted temperature captures both the heating phase ($0\le t\le 0.5$) and the cooling phase
($0.5<t\le 1$).

When neither an analytical parametrization nor a level-set representation is available and the surface is given only as a point cloud,
we can apply the point-cloud-based RaNN approach introduced in Section~\ref{subsec:ls-pc} by reconstructing local geometry
(normals/curvatures) from the point set, and design heat experiments analogous to the present one to assess robustness.

\begin{figure}[H] 
	\centering  
	\begin{subfigure}{0.9\textwidth}
		\centering      
		\includegraphics[width=1.0\textwidth]{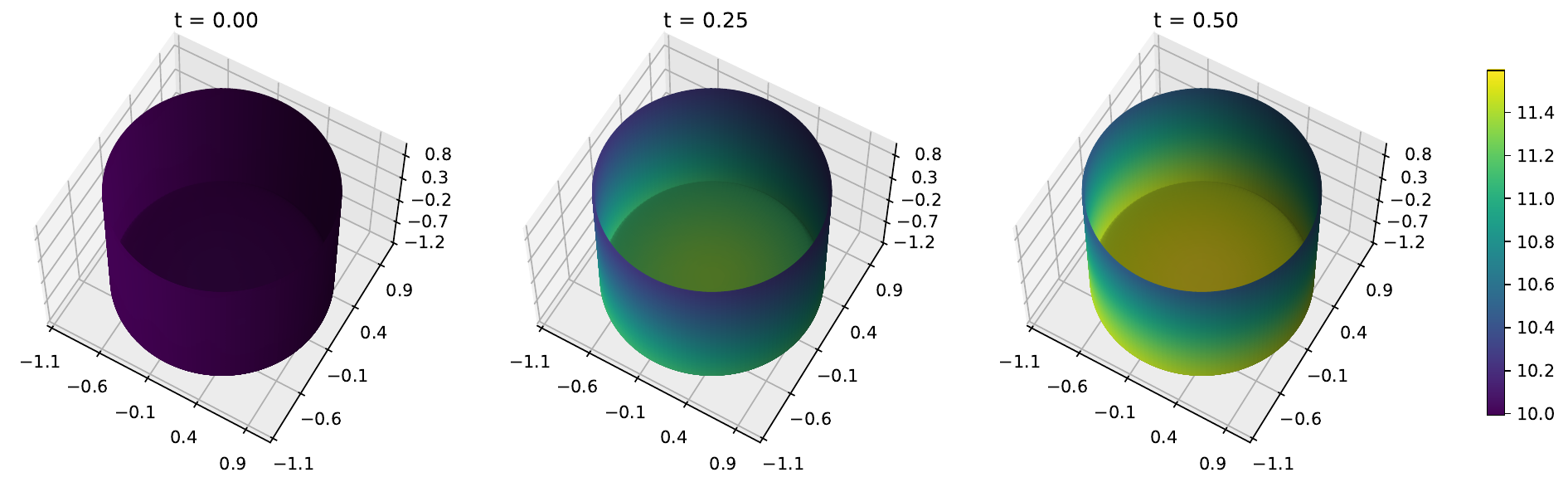}  
		\caption{Heating process ($0\le t \le 0.5$).}
		%\label{fig:a}
	\end{subfigure}\\
	\begin{subfigure}{0.9\textwidth}
		\centering      
		\includegraphics[width=1.0\textwidth]{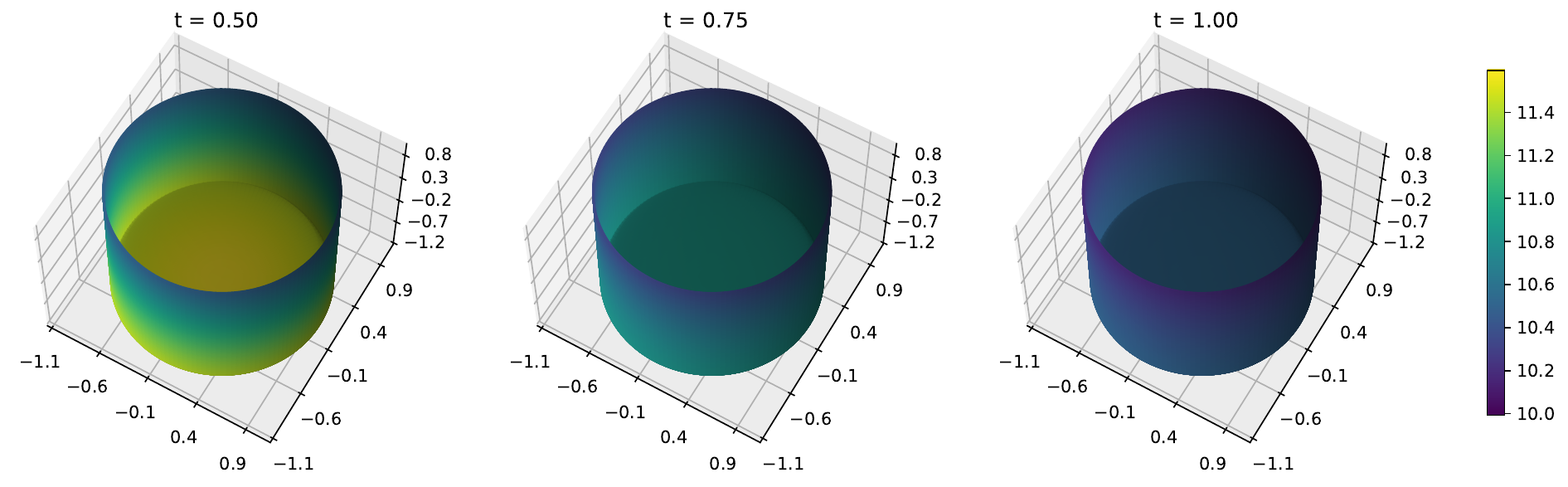}  
		\caption{Cooling process ($0.5 < t \le 1$).}
		%\label{fig:b}
	\end{subfigure}
	\caption{Predicted temperature distribution on the cup-shaped surface.}  
	\label{cup_heat}
\end{figure}

\begin{example}[Heat equation on a point-cloud surface]\label{ex_td_bunny}
We consider the heat equation on the bunny surface $\Gamma_b$ over $I=[0,1]$:
\[
\partial_t u - \alpha\,\Delta_\Gamma u = f \quad \text{on } I\times \Gamma_b,
\]
with thermal diffusivity $\alpha=0.015$ and initial condition $u(0,\cdot)\equiv 0$.
A localized heat source is applied near the bunny's tail at $\bX_\ast=(0.068,0.06,0.01)$:
\[
f(t,x,y,z)=10\,\exp\!\Big(-\big((100(x-0.068))^2+(100(y-0.06))^2+(100(z-0.01))^2\big)\Big),
\]
which is time-independent over $t\in[0,1]$.
\end{example}

The surface $\Gamma_b$ is given only as a point cloud.
Following the point-cloud-based approach described earlier, we reconstruct the local geometric information needed to evaluate
$\Delta_\Gamma$ (e.g., unit normals and mean curvature) via local quadratic fitting \cite{Cazals2005fitting,Walker2015fs}.
We then approximate $u$ by an ambient space--time RaNN with network width $M=2500$,
using $r_x=r_t=1$ and the bias sampling set $\bB\sim U([0,1]\times[-0.1,0.07]\times[0,0.2]\times[-0.07,0.07])$.
The initial condition is enforced by a penalty parameter $\beta=50$.

For training, we randomly sample $N_{\rm col}=40000$ spatiotemporal collocation points from $[0,1]\times \Gamma_b$ to enforce the PDE
residual, and $N_0=200$ points from $\{0\}\times\Gamma_b$ to impose the initial condition.
The predicted temperature distribution is shown in Figure~\ref{figure_bunny_heat_apply}, the temperature rise originates from the
vicinity of the tail, consistent with the expected physical behavior.

\begin{figure}[H]  
	\centering   	
	\includegraphics[width=0.8\textwidth]{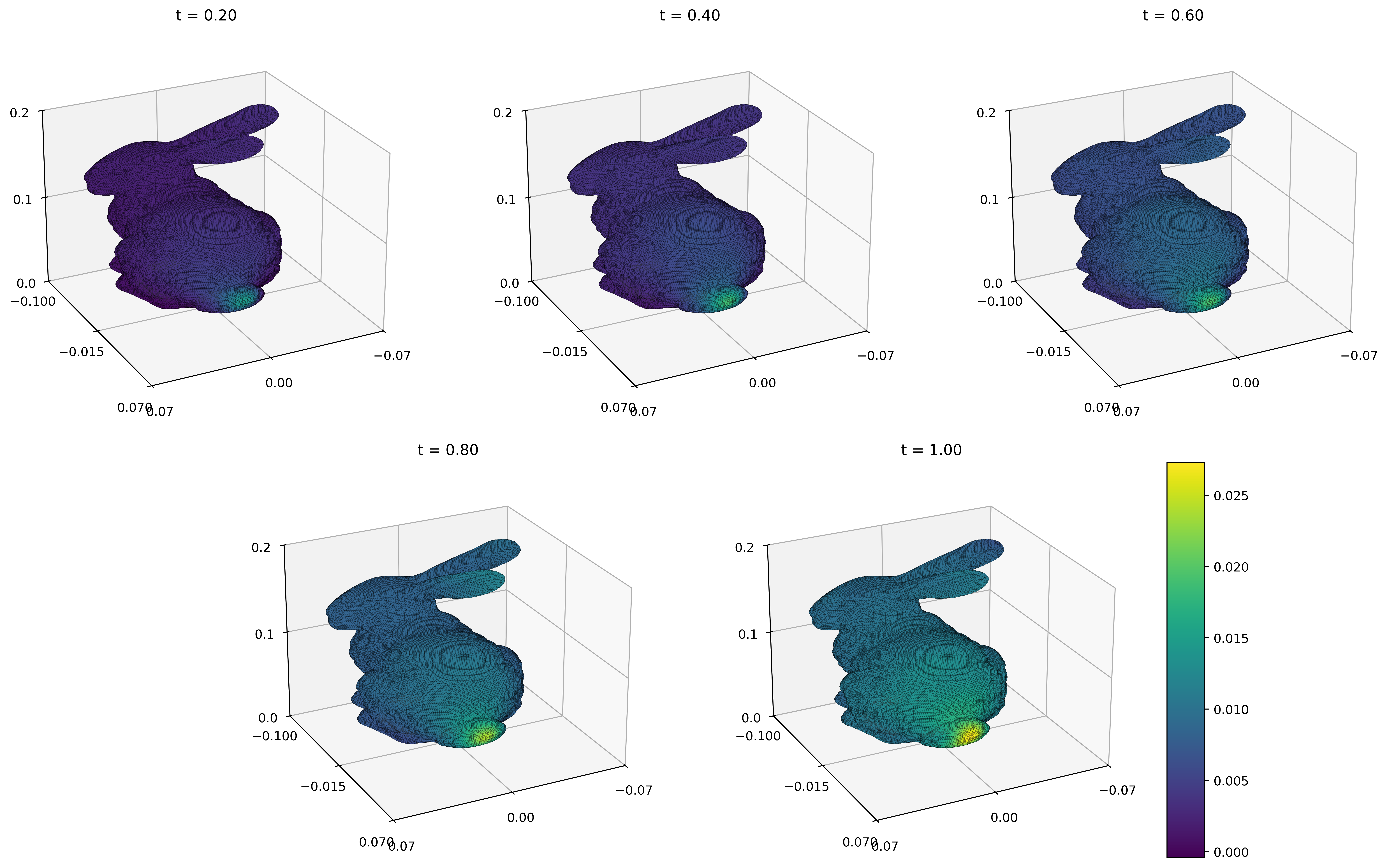}   
	\caption{Predicted heat distribution on the bunny surface.} 
	\label{figure_bunny_heat_apply}
\end{figure}

\begin{example}[Advection–diffusion on an oscillating ellipsoid]
\label{ex_ev1}

To facilitate comparison with existing methods and further validate the proposed approach, we consider the advection–diffusion equation \eqref{eq:ev-u} on an oscillating ellipsoid (\cite{Petras2016evolving, Hsu2019evolving, Hu2024pinn}), represented as the zero-level set
\[
\Gamma(t) = \left\{(x, y, z) \mid \left( \frac{x}{1.5a(t)} \right)^2 + y^2 + \left( \frac{z}{0.5} \right)^2 = 1 \right\},
\]
where $a(t) = \sqrt{1 + 0.95 \sin(\pi t)}$. The associated velocity field is $\bv = \left(\frac{a'(t)}{a(t)} x, 0, 0 \right)$. We adopt the exact solution $u^*(x,y,z) = \sin(x + \sin(t))\exp(\cos(y-z))$, from which the source term and initial condition follow directly.
\end{example}

First, we approximate the surface evolution by learning the flow map $\mathcal N_e$.
In this benchmark the velocity field $\bv=(a'(t)/a(t)\,x,0,0)$ induces a pure stretching in the $x$-direction,
so the trajectory equations decouple by components. Consequently, each component network $\mathcal N_e^i$ can be taken to depend
only on $(t,(X_0)_i)$, i.e., it has two inputs.
We use three one-hidden-layer RaNNs with widths $(M_1,M_2,M_3)$ to approximate the three coordinate components of the flow map.
We set $r_x^{1}=r_x^{2}=r_x^{3}=1$, and choose $r_t^{1}=16$, $r_t^{2}=r_t^{3}=1$ to reflect the different temporal scales.
Based on the coordinate ranges of the initial surface $\Gamma_0$ we use the axis-aligned bounding boxes
$\widetilde D_1=[-1.5,1.5]$, $\widetilde D_2=[-1,1]$, and $\widetilde D_3=[-0.5,0.5]$, and take
$\beta=100$ for enforcing the initial condition.

To sample points on the initial surface approximately uniformly, we employ the Fibonacci lattice method \cite{Gonzalez2010Fibonacci}.
We select $N_x$ points on $\Gamma_0$ to enforce the initial condition and $N_{\rm col}=N_tN_x$ space--time points to enforce the
trajectory ODE residual, where $N_t$ is the number of time samples in $I=[0,2]$.

We evaluate accuracy at $t=2$ using relative discrete $\ell^2$ errors in the flow-map coordinates, unit normals, and mean curvature:
\[
E_{\bx}:=\frac{\|\mathcal N_e-\bx\|_{\ell^2(P_{\rm test})}}{\|\bx\|_{\ell^2(P_{\rm test})}},\qquad
E_{\bn}:=\frac{\|\bn_{\mathcal N}-\bn\|_{\ell^2(P_{\rm test})}}{\|\bn\|_{\ell^2(P_{\rm test})}},\qquad
E_{H}:=\frac{\|H_{\mathcal N}-H\|_{\ell^2(P_{\rm test})}}{\|H\|_{\ell^2(P_{\rm test})}},
\]
together with the total training time, where $P_{\rm test}\subset\Gamma(2)$ is an independent test set.
The results are summarized in Table~\ref{tablel_ev1_x}, and the evolving surface is visualized in Figure~\ref{fig_ev1_x}, with color indicating the mean curvature.

\begin{table}[H]
\centering
\begin{tabular}{cccccc}
\toprule[1.5pt]
$M_1, M_2, M_3$              & $N_{\rm col},N_x$ & $E_{\bx}$ & $E_{\bn}$    & $E_H$    & Time (s)  \\ \midrule[1.5pt]
\multirow{4}{*}{1600,500,500} & $80^2,80$               & 2.27E-06  & 1.58E-06 & 2.42E-06 & 0.624  \\
                      & $120^2,120$             & 7.48E-08  & 1.17E-07 & 6.62E-07 & 1.324  \\
                      & $160^2,160$             & 8.06E-09  & 8.63E-08 & 6.47E-07 & 2.150  \\
                      & $200^2,200$             & 5.31E-09  & 3.64E-08 & 1.81E-07 & 3.510  \\ \midrule[1.5pt]
\multirow{4}{*}{2600,500,500} & $80^2,80$               & 5.87E-05  & 1.11E-05 & 1.71E-05 & 1.125  \\
                      & $120^2,120$             & 1.32E-07  & 1.27E-07 & 1.88E-07 & 2.136  \\
                      & $160^2,160$             & 1.46E-09  & 2.41E-09 & 1.08E-08 & 3.554  \\
                      & $200^2,200$             & 2.42E-10  & 2.54E-09 & 1.36E-08 & 5.293 
 \\ \bottomrule[1.5pt]
\end{tabular}
\caption{Relative errors of $\bx$, $\bn$, and $H$ at $t=2$ and training time for the learned surface evolution.}
\label{tablel_ev1_x}
\end{table}

\begin{figure}[H] 
	\centering  
	\begin{subfigure}{0.23\textwidth}
		\centering      
		\includegraphics[width=1.0\textwidth]{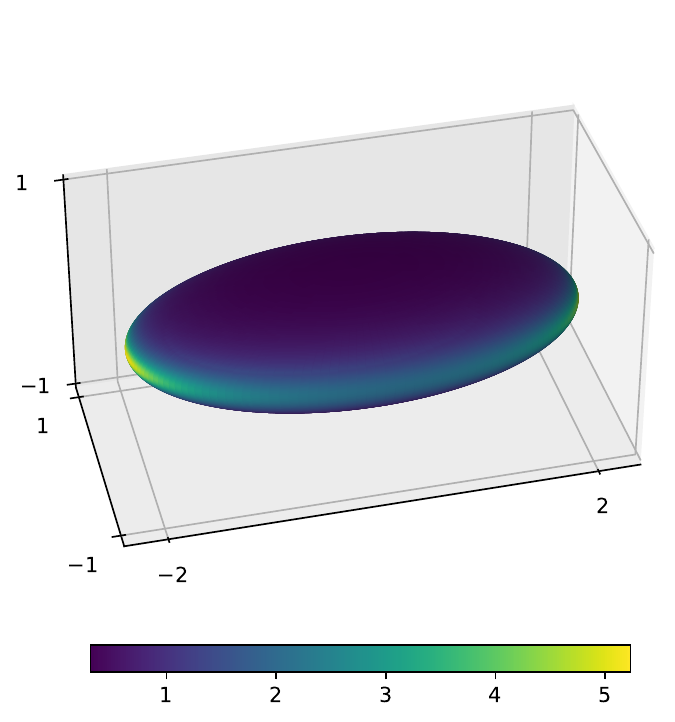}  
		\caption{t = 0.5.}
		%\label{fig:a}
	\end{subfigure}
	\begin{subfigure}{0.23\textwidth}
		\centering      
		\includegraphics[width=1.0\textwidth]{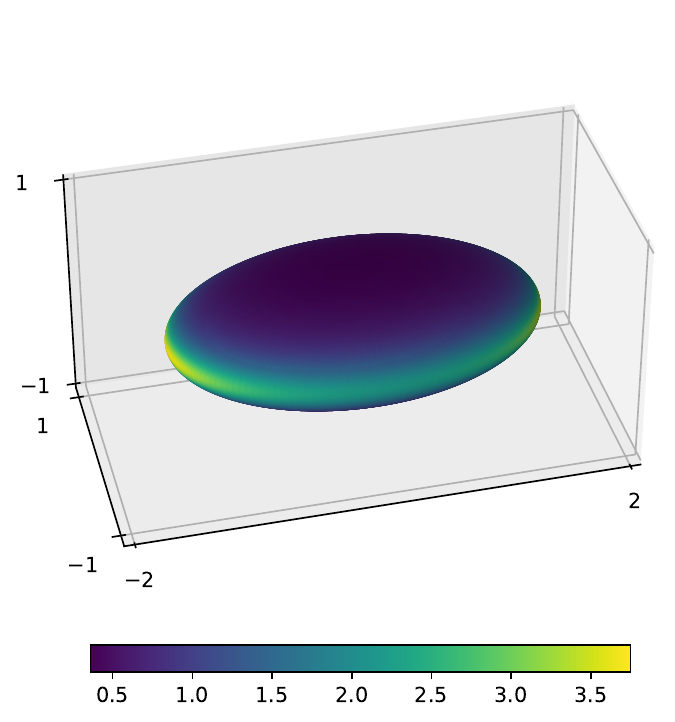}  
		\caption{t = 1.0.}
		%\label{fig:b}
	\end{subfigure}
	\begin{subfigure}{0.23\textwidth}
			\centering      
			\includegraphics[width=1.0\textwidth]{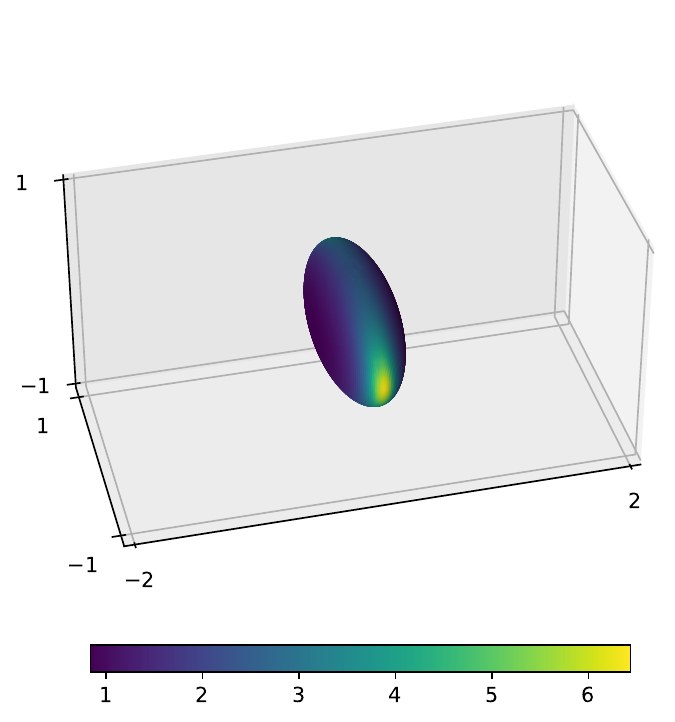}  
			\caption{t = 1.5.}
			%\label{fig:b}
		\end{subfigure}
	\begin{subfigure}{0.23\textwidth}
			\centering      
			\includegraphics[width=1.0\textwidth]{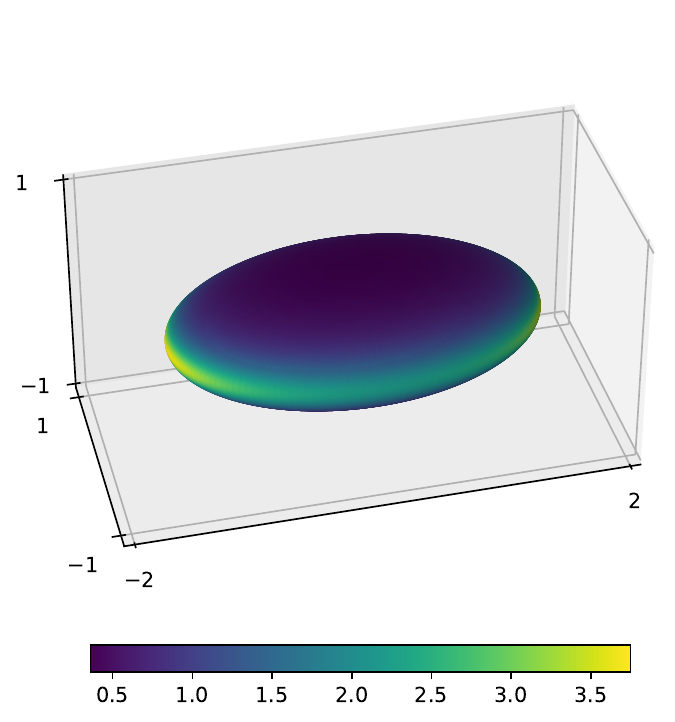}  
			\caption{t = 2.0.}
			%\label{fig:b}
		\end{subfigure}
	\caption{Evolving ellipsoid and corresponding mean curvature distribution.}  
	\label{fig_ev1_x}
\end{figure}

Next, we solve the advection--diffusion problem \eqref{eq:ev-u} on the evolving surface $\Gamma(t)$.
Using the learned evolution networks $(\mathcal N_e^{1},\mathcal N_e^{2},\mathcal N_e^{3})$ with widths $(2600,500,500)$ and the
sampling resolution $N_0=200$ (as reported in Table~\ref{tablel_ev1_x}), we generate space--time collocation points on
$I\times\Gamma(t)$ to train the solution network $u_e$.
We set $r_t=r_x=0.8$ and use an initial-condition penalty $\beta=100$.
The bias sampling set is chosen by drawing $\bB\sim U([0,2]\times[-2,2]\times[-1,1]\times[-0.5,0.5])$.
The relative $\ell^2$ errors (evaluated at $t=2$) and training times are summarized in Table~\ref{tablel_ev1_u},
demonstrating that the proposed method can efficiently and accurately solve advection--diffusion equations on evolving surfaces.

\begin{table}[H]
\centering
\begin{tabular}{ccccccc}
\toprule[1.5pt]
$M$                & \multicolumn{2}{c}{1400} & \multicolumn{2}{c}{2000} & \multicolumn{2}{c}{2600} \\ \cmidrule[0.5pt](l){2-3} \cmidrule[0.5pt](l){4-5} \cmidrule[0.5pt](l){6-7}
$N_x,N_{\rm col}$ & Error       & Time (s)    & Error       & Time (s)    & Error       & Time (s)    \\ \midrule[0.5pt]
$80,80^2$               & 2.41E-04    & 1.140   & 9.63E-05    & 1.714   & 7.80E-05    & 2.370   \\
$120,120^2$             & 2.83E-05    & 2.218   & 8.16E-06    & 3.301   & 1.03E-05    & 4.484   \\
$160,160^2$             & 4.66E-06    & 3.860   & 3.05E-06    & 5.613   & 1.40E-06    & 7.532   \\
$200,200^2$             & 3.32E-06    & 5.901   & 6.97E-07    & 8.579   & 3.07E-07    & 11.330   \\
\bottomrule[1.5pt]
\end{tabular}
\caption{Relative errors of $u$ at $t=2$ on the evolving surface and training time.}
\label{tablel_ev1_u}
\end{table}

\begin{example}[Surfactant transport on a droplet surface under shear flow]
\label{ex_ev2}
We consider surfactant transport on a droplet surface driven by a prescribed shear flow, following \cite{Hu2024pinn}.
Fluid effects are neglected and the ambient velocity field is
\[
\bv(t,x,y,z)=(z,0,0).
\]
The initial droplet surface is the unit sphere centered at the origin, and the evolving surface admits the exact level-set form
\[
\Gamma(t)=\big\{(x,y,z)\in\mathbb R^3:\ (x-tz)^2+y^2+z^2=1\big\},\qquad t\in I=[0,3].
\]
We consider the conservative surface advection--diffusion equation \eqref{eq:ev-u} with $f=0$ and initial condition
$u(0,\cdot)\equiv 1$ on $\Gamma(0)$.
\end{example}

Although the exact flow map is available for this benchmark, we first approximate the surface evolution using the RaNN flow-map
networks $\mathcal N_e=(\mathcal N_e^1,\mathcal N_e^2,\mathcal N_e^3)$ to assess geometric accuracy.
For $\bv=(z,0,0)$ the trajectory equations decouple as
$x(t)=x_0+t z_0$, $y(t)=y_0$, $z(t)=z_0$, hence we take
\[
\mathcal N_e^1=\mathcal N_e^1(t,x_0,z_0),\qquad
\mathcal N_e^2=\mathcal N_e^2(t,y_0),\qquad
\mathcal N_e^3=\mathcal N_e^3(t,z_0),
\]
i.e., the spatial input dimensions are $d_1=2$, $d_2=d_3=1$.
We set $r_x^i=r_t^i=1$ and enforce the initial condition using the penalty parameter $\beta=100$.
Since $\Gamma(0)$ is the unit sphere, we sample the bias sets as
\[
\bB_1\sim U([0,3]\times[-1,1]^2),\qquad
\bB_2\sim U([0,3]\times[-1,1]),\qquad
\bB_3\sim U([0,3]\times[-1,1]).
\]
With $N_0$ samples on $\Gamma(0)$ and $N_0$ samples in time, we form $N_{\rm col} = N_0^2$ space--time collocation pairs to enforce the
trajectory ODE residual, and report relative errors of $\bx$, $\bn$, and $H$ at $t=3$
together with the training time in Table~\ref{tablel_ev2_x}.

\begin{table}[H]
\centering
\begin{tabular}{cccccc}
\toprule[1.5pt]
$M_1, M_2, M_3$              & $N_{\rm col},N_0$ & $E_{\bx}$ & $E_{\bn}$    & $E_H$    & Time (s)  \\ \midrule[1.5pt]
\multirow{4}{*}{1600} & $80^2,80$               & 1.11E-06  & 7.49E-06 & 3.36E-05 & 0.975 \\
                      & $120^2,120$             & 8.43E-10  & 7.98E-09 & 5.50E-08 & 2.046 \\
                      & $160^2,160$             & 1.19E-09  & 1.19E-08 & 6.50E-08 & 3.754 \\
                      & $200^2,200$             & 8.44E-10  & 1.19E-08 & 8.77E-08 & 6.137 \\ \midrule[1.5pt]
\multirow{4}{*}{2600} & $80^2,80$               & 1.90E-06  & 1.43E-05 & 6.02E-05 & 1.805 \\
                      & $120^2,120$             & 2.18E-08  & 2.22E-07 & 1.57E-06 & 3.685 \\
                      & $160^2,160$             & 2.59E-10  & 3.17E-09 & 2.05E-08 & 6.037 \\
                      & $200^2,200$             & 4.54E-11  & 5.32E-10 & 3.71E-09 & 10.387 \\ \bottomrule[1.5pt]
\end{tabular}
\caption{Relative errors of $\bx$, $\bn$, and $H$ at $t=3$ and training time.}
\label{tablel_ev2_x}
\end{table}

Because the prescribed shear flow $\bv$ is incompressible, the enclosed droplet volume should be conserved in time.
We compute the volume using the standard surface formula
\[
V(t)=\frac{1}{3}\int_{\Gamma(t)} \bx\cdot \bn \,ds,
\]
so that $V(0)=\frac{4}{3}\pi$ for the unit sphere.
We evaluate the relative volume error
\[
E_V(t):=\frac{|V_{\mathcal N}(t)-V(0)|}{V(0)}
\]
by Gaussian quadrature on the reconstructed surface, where $\Gamma_{\mathcal N}(t)$ denotes the surface represented by $\mathcal N_e$.
Figure~\ref{fig_ev2_V} compares the resulting volume error with that reported in \cite{Hu2024pinn}, indicating that the proposed method
achieves substantially smaller volume drift for this benchmark.

\begin{figure}[htbp] 
	\centering  
	\begin{subfigure}{0.45\textwidth}
		\centering      
		\includegraphics[width=0.9\textwidth]{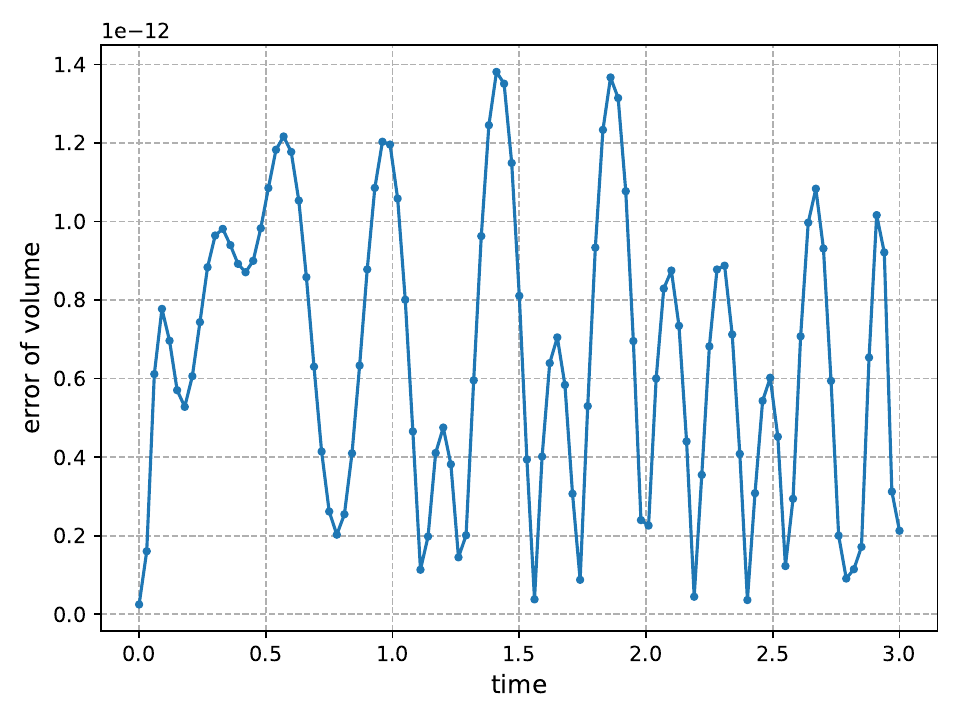}  
		\caption{Droplet volume error.}
		\label{fig_ev2_V}
	\end{subfigure}\quad
	\begin{subfigure}{0.45\textwidth}
		\centering      
		\includegraphics[width=0.9\textwidth]{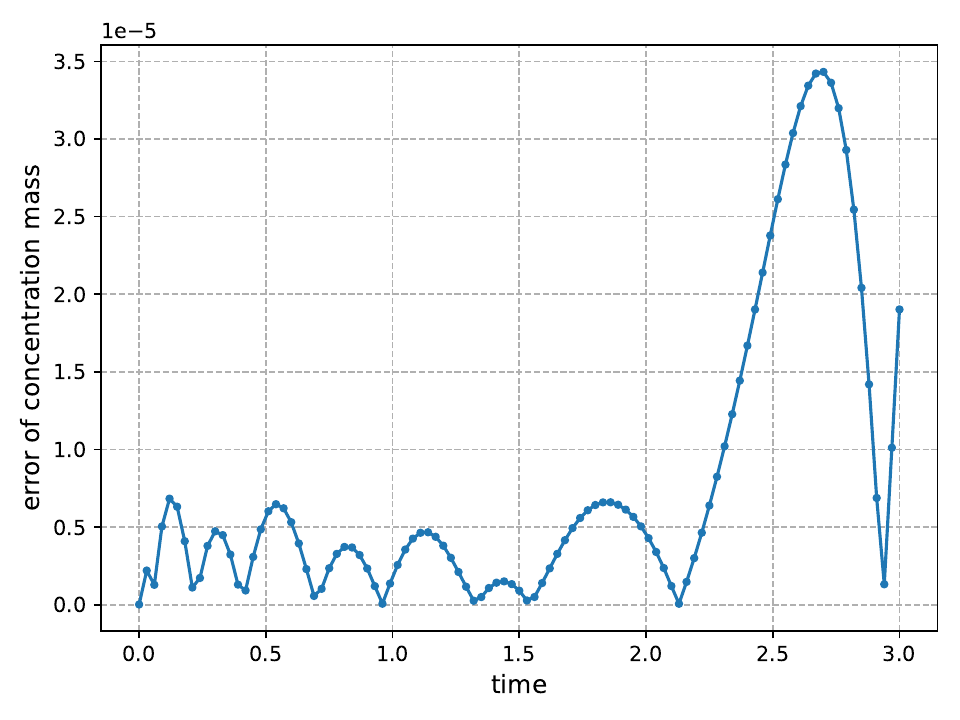}  
		\caption{Surfactant mass error.}
		\label{fig_ev2_M}
	\end{subfigure}
	\caption{Time histories of droplet volume error (a) and surfactant mass error (b).}  
	%\label{cup_heat}
\end{figure}

\begin{figure}[htbp]
    \centering
    \begin{subfigure}{0.3\textwidth}
        \includegraphics[width=\linewidth]{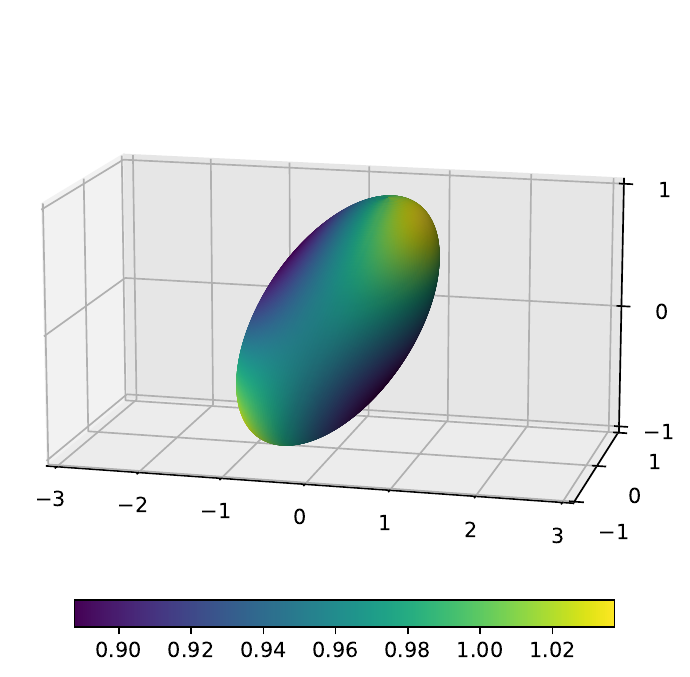}
        \caption{t=0.6.}
    \end{subfigure}\hfill
    \begin{subfigure}{0.3\textwidth}
        \includegraphics[width=\linewidth]{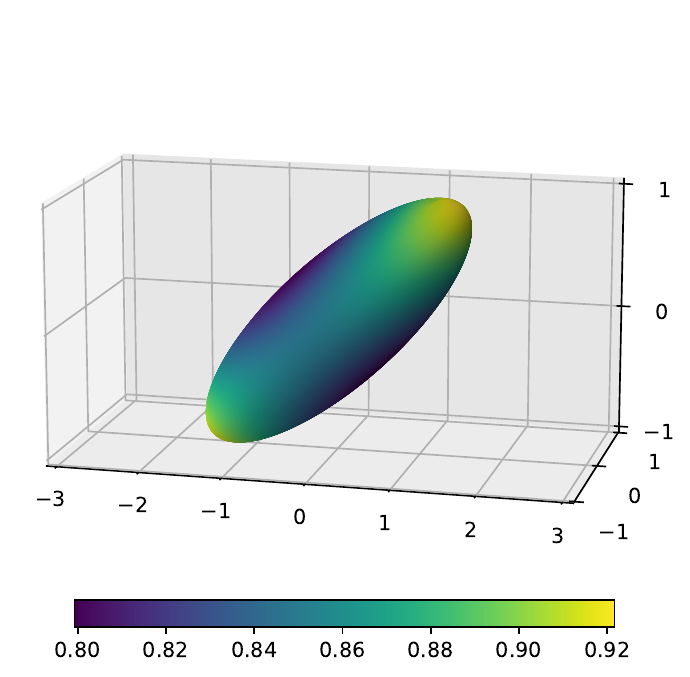}
        \caption{t=1.2.}
    \end{subfigure}\hfill
    \begin{subfigure}{0.3\textwidth}
        \includegraphics[width=\linewidth]{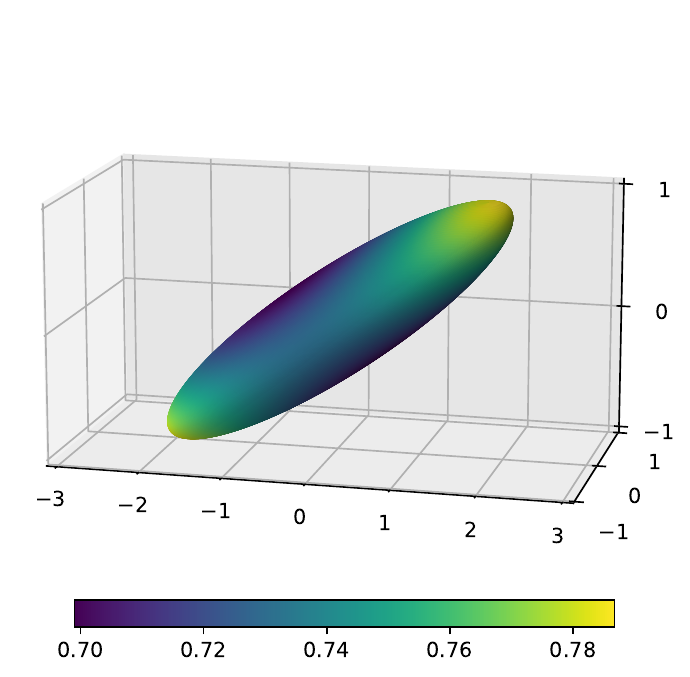}
        \caption{t=1.8.}
    \end{subfigure} \\ 
    \begin{subfigure}{0.3\textwidth}
        \includegraphics[width=\linewidth]{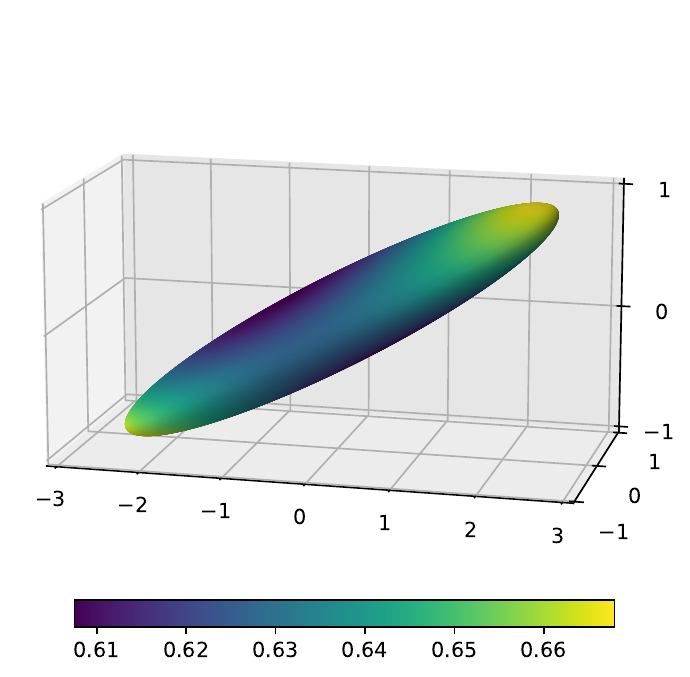}
        \caption{t=2.4.}
    \end{subfigure}
    \begin{subfigure}{0.3\textwidth}
        \includegraphics[width=\linewidth]{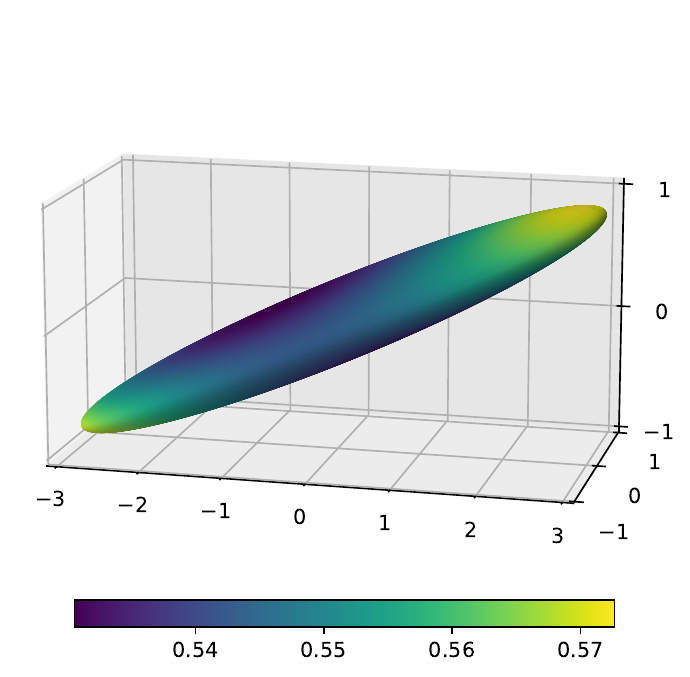}
        \caption{t=3.0.}
    \end{subfigure}
    \caption{Snapshots of the droplet surface $\mathcal{N}^{1,2,3}_e$ and surfactant concentration $u_e$ at different times. Colors indicate the magnitude of $u_e$.}
\label{fig_ev2_u}
\end{figure}

We next solve \eqref{eq:ev-u} on $\Gamma(t)$ using the learned evolution network with width $M_i=2600, i=1,2,3$, and sampling resolution
$N_0=200$ (cf.\ Table~\ref{tablel_ev2_x}) to generate space--time collocation points on $I\times\Gamma(t)$.
We set $r_t=r_x=1$ and use an initial-condition penalty $\beta=100$ for the solution network $u_e$.
The bias sampling set is chosen as $\bB\sim U([0,3]\times[-3.2,3.2]\times[-1,1]\times[-1,1])$.
Since $f=0$, the total surfactant mass
\[
m(t):=\int_{\Gamma(t)} u\,ds
\]
should be conserved with $m(0)=4\pi$. We therefore monitor the relative mass error
$E_m(t):=|m_{\mathcal N}(t)-m(0)|/m(0)$ (Gaussian quadrature), reported in Figure~\ref{fig_ev2_M}.
Figure~\ref{fig_ev2_u} shows snapshots of the predicted concentration; the shear flow advects surfactant toward the droplet tips.
Notably, even without explicit enforcement of mass conservation, the proposed method preserves surfactant mass to high accuracy.

Traditional mesh-based methods for PDEs on evolving surfaces typically require repeated mesh updates to accommodate large deformations,
together with mesh-to-mesh data transfer (interpolation) of geometric quantities and solution values, which increases computational
overhead and may introduce additional numerical errors.
By contrast, the proposed RaNN framework is mesh-free in the sense that it enforces the governing equations directly at space--time
collocation points and represents the evolving geometry through a learned flow map, avoiding remeshing and solution transfer between
successive meshes.
In the present shear-flow benchmark, the method remains accurate under significant surface distortion and preserves key invariants
(e.g., volume and surfactant mass) to high accuracy, underscoring its potential for solving PDEs on evolving surfaces that undergo
smooth, topology-preserving transformations.

\section{Summary}
\label{sec:conclusion}

In this work, we developed a mesh-free RaNN framework for solving PDEs on static and evolving surfaces using the strong form of the PDE.
The proposed methodology accommodates a broad range of surface representations, including (i) parametrizable surfaces (via an atlas),
(ii) implicit surfaces described by level-set functions, and (iii) surfaces provided only as unstructured point clouds.

For PDEs on static surfaces, we constructed RaNN discretizations for both stationary and time-dependent problems and provided a
corresponding theoretical analysis for the parametrization-based formulation with interface compatibility.
Across a variety of benchmarks, the proposed approach achieved high accuracy with low computational cost, and compared favorably with
recent neural-network-based solvers for surface PDEs, including the PINN-type method in \cite{Hu2024pinn}.

For evolving surfaces with topology preserved over time, we further introduced a RaNN-based strategy that learns the surface evolution
through a flow-map representation and then solves the surface PDE on the resulting space--time collocation set.
This avoids repeated mesh construction/remeshing and mesh-to-mesh solution transfer, thereby reducing computational overhead and
mitigating errors associated with low-quality surface discretizations.
Numerical experiments demonstrate that the proposed method can accurately capture surface evolution and efficiently approximate PDE
solutions on strongly deformed evolving surfaces.

Future work will extend the framework in several directions.
First, we will address evolving surfaces with topological changes, where a single global flow-map
representation may break down and alternative implicit or hybrid geometric representations become necessary.
Second, we will study strongly coupled evolution problems in which the surface motion and the surface PDE are mutually dependent,
so that the geometry and the solution must be advanced in a coupled manner rather than in two separate stages.
Third, we will consider more complex models such as phase-field and multiphysics systems posed on surfaces.
In addition, we plan to improve robustness and adaptability by developing principled strategies for selecting bandwidth parameters
(e.g., $r_x$ and $r_t$) and for designing effective space--time sampling and conditioning control in the resulting least-squares systems.

\end{document}